\documentclass[11pt,reqno,oneside]{amsart}

\usepackage{graphicx}
\usepackage{amssymb}
\usepackage{epstopdf}

\usepackage[a4paper, total={5.56in, 9in}]{geometry}
\usepackage{cleveref}

\usepackage{amsmath,amsfonts,amsthm,mathrsfs,amssymb,cite}
\usepackage[usenames]{color}

\usepackage{xcolor}
\newcommand{\Red}[1]{{\color{red}#1}}

\newtheorem{thm}{Theorem}[section]

\newtheorem{cor}[thm]{Corollary}
\newtheorem{lem}{Lemma}[section]
\newtheorem{prop}{Proposition}[section]
\theoremstyle{definition}
\newtheorem{defn}{Definition}[section]

\theoremstyle{remark}

\newtheorem{rem}{Remark}[section]
\numberwithin{equation}{section}

\numberwithin{equation}{section}

\newcounter{saveeqn}


\newcommand{\B}{B}




\newcommand{\T}{T}


\newcommand{\EM}[1]{\mathbf{#1}}


\allowdisplaybreaks



\title[Geometrical characterizations of  elastic sources and mediums]{Geometrical characterizations of radiating and non-radiating elastic sources and mediums with applications}

\author{Huaian Diao}
\address{School of Mathematics and Key Laboratory of Symbolic Computation and Knowledge Engineering of Ministry of Education, Jilin University, Changchun 130012, People's Republic of China}
\email{diao@jlu.edu.cn, hadiao@gmail.com}

\author{Xiaoxu Fei}
\address{School of Mathematics and Statistics, Central South University, Changsha 410083, People's Republic of China}
\address{Department of Mathematics, City University of Hong Kong, Kowloon, Hong Kong SAR, People's Republic of China}
\email{feixx0921@163.com}

\author{Hongyu Liu}
\address{Department of Mathematics, City University of Hong Kong, Kowloon, Hong Kong SAR, People's Republic of China}
\email{hongyu.liuip@gmail.com, hongyliu@cityu.edu.hk}

\date{} 

\begin{document}
\maketitle

	\begin{abstract}

In this paper, we investigate two types of time-harmonic elastic wave scattering problems. The first one involves the scattered wave generated by an active elastic source with compact support. The second one concerns elastic wave scattering caused by an inhomogeneous medium, also with compact support.  We derive several novel quantitative results concerning the geometrical properties of the underlying scatterer, the associated source or incident wave field, and the physical parameters. In particular, we show that a scatterer with either a small support or high-curvature boundary points must radiate at any frequency. These qualitative characterizations allow us to establish several local and global uniqueness results for determining the support of the source or medium scatterer from a single far-field measurement. Furthermore, we reveal new geometric properties of elastic  transmission eigenfunctions. To derive a quantitative relationship between the intensity of a radiating or non-radiating source and the diameter of its support, we utilize the Helmholtz decomposition, the translation-invariant $L^2$-norm estimate for the Lam\'e operator, and global energy estimates. Another pivotal technical approach combines complex geometric optics (CGO) solutions with local regularity estimates, facilitating microlocal analysis near admissible $K$-curvature boundary points.

     \medskip
	 \noindent{\bf Keywords:}~~elastic scattering problem, non-radiating, transmission eigenfunctions, inverse problems, single far-field measurement
	 
	 \medskip
	 \noindent{\bf 2020 Mathematics Subject Classification:} 35R30, 35Q74, 74J20
	
	\end{abstract}

\section{Introduction}

We consider time-harmonic elastic wave scattering problems involving either an active elastic source or an inhomogeneous elastic medium, both with compact support. In this context, we refer to the support of the source term as the {\em scatterer}, as it represents the region responsible for generating the scattered field. Likewise, an inhomogeneous medium is also termed a {\em scatterer}. In this section, we introduce the mathematical models that serve as the foundation for our subsequent analysis and review relevant results from the existing literature. Finally, we summarize the main contributions of this paper at the end of the section.


\subsection{Mathematical setup}\label{sec:1.1}


First, we are concerned with the time-harmonic elastic source scattering problem  in a homogeneous medium. Let $\Omega$ be a bounded Lipschitz domain in $\mathbb R^n\ (n\in\{2,3\})$ with a connected complement $\mathbb R^n\setminus \overline{\Omega}$. Let $\mathbf f:\mathbb R^n\to \mathbb C^n$ be a complex-valued vector function with a compact support such that  $ {\rm supp}(\mathbf f) \subset \Omega$. Denote $\mathbf f=\chi_{\Omega}\boldsymbol\varphi,$ where $\chi_\Omega$ is the characteristic function of $\Omega$, $ \boldsymbol\varphi \in L^\infty(\mathbb R^n)^n$  signifies the intensity of the elastic source 
 at various points in $\Omega$ and $\boldsymbol\varphi\not=\mathbf 0$ in a neighborhood of $\partial \Omega$.  Consider the  time-harmonic elastic source scattering problem:
\begin{equation}\label{eq:lame}
\mathcal L\mathbf u+\omega^2\mathbf u=\mathbf f,
\end{equation}
where the operator
$\mathcal L:=\mu \Delta +(\lambda+\mu)\nabla\nabla\cdot,
\ \omega>0$ is the angular frequency. The Lam\'e constants $\lambda$ and $\mu$ satisfy the  strong convexity condition:
\begin{equation}\label{eq:strong conv}
\mu>0 \ \mbox{and}\ n\lambda+2\mu >0,\ \mbox{for}\ n=2,3.
\end{equation}
By  the Helmholtz decomposition (cf.\cite{FKS}), the external elastic source $\mathbf f\in L^q(\Omega)^n,1<q<\infty$ has the decomposition: 
\begin{equation}\label{eq:dehelm}
\mathbf f=\mathbf f_p+\mathbf f_s,
\end{equation}
where
$\mathbf f_p,\mathbf f_s\in L^{q}(\Omega)^n$ with $\nabla \times \mathbf f_p=\EM 0, \nabla \cdot \mathbf f_s=0$. Additionally, the   displacement vector field  $\mathbf u=(u_\ell)^n_{\ell=1}$ to  \eqref{eq:lame} can be decomposed into the pressure wave $\mathbf u_p$ and shear wave $\mathbf u_s$, satisfying
\begin{equation}\notag
\mathbf u=\mathbf u_{p}+\mathbf u_{ s}
\end{equation}
and
\begin{equation}\label{eq:deu}
\begin{cases}
&\Delta \mathbf u_p+\kappa_p^2\mathbf u_p=\mathbf f_p,\ \nabla \times \mathbf u_p=\mathbf 0, \\
&\Delta \mathbf u_s+\kappa_s^2\mathbf u_s=\mathbf f_s,\ \nabla \cdot \mathbf u_s=0, 
\end{cases}
\end{equation}
where $\kappa_{  p},\kappa_{ s}$ are the wave numbers for the pressure and shear waves given by
\begin{equation}\label{eq:wave number}
\kappa_{  p}=\frac{\omega}{\sqrt{\lambda+2\mu}}\ \mbox{and}\ \kappa_{ s}=\frac{\omega}{\sqrt{\mu}}.
\end{equation}
Moreover, the pressure wave $\mathbf u_p$ and shear wave $\mathbf u_s$ are characterized by
\begin{equation}\label{eq:decomp}
\mathbf u_{ p}=-\frac{1}{\kappa_{ p}^2}\nabla(\nabla \cdot \mathbf u)\ \mbox{and}\ \mathbf u_{s}=\left \{ \begin{array}{lr}
\frac{1}{\kappa_{ s}^2} \nabla\times\nabla\times\mathbf u\ (\mathbb R^3),&\\
\frac{1}{\kappa_{ s}^2} {\bf curl}\ \mathrm {curl}\ \mathbf u\ (\mathbb R^2),&
\end{array}\right.
\end{equation}
where the two-dimensional operators ${\bf curl}$ and curl are defined by
$$\mathrm {curl}\  \mathbf u=\partial_1u_2-\partial_2u_1,\ {\bf curl }\ u=(\partial_2 u,-\partial_1u)^{\top},$$
with $\mathbf u=(u_1,u_2)$ and $u$ being vector-valued and scalar-valued functions, respectively.
It is important to note that the aforementioned decompositions also play a significant role in our subsequent analysis.

The pressure wave $\mathbf u_p$ and the shear wave $\mathbf u_s$ satisfy the Kupradze radiation condition
\begin{equation}\label{eq:rad}
\begin{cases}
\lim_{\vert \mathbf x\vert  \to\infty} \frac{1}{\vert \mathbf x\vert ^{(n-1)/2}}(\frac{\partial \mathbf u_{p}}{\partial \vert \mathbf x\vert }-\mathrm i\kappa_{ p} \mathbf u_{ p})=\mathbf 0,\\
\lim_{\vert \mathbf x\vert \to\infty} \frac{1}{\vert \mathbf x\vert ^{(n-1)/2}}(\frac{\partial \mathbf u_{ s}}{\partial \vert \mathbf x\vert }-\mathrm i\kappa_{ s} \mathbf u_{ s})=\mathbf 0.
\end{cases}
\end{equation}
It is known that for a given  elastic source $\mathbf f\in L^\infty (\Omega)^n$, there exists an unique  displacement field $\mathbf u \in H^2_{\rm loc}(\mathbb R^n)^n$ to \eqref{eq:lame} and \eqref{eq:rad}(see \cite{BLY}). Furthermore,  $\mathbf u$ satisfies the following asymptotic expansion (cf.\cite{Da,P1998})
\begin{equation}\notag
\mathbf u=\frac{e^{\mathrm i\kappa_p\vert \mathbf x\vert }}{\vert \mathbf x\vert ^{\frac{n-1}{2}}} u_p^\infty(\hat{\mathbf x})\hat{\mathbf x}+\frac{e^{\mathrm i\kappa_p\vert \mathbf x\vert }}{\vert \mathbf x\vert ^{\frac{n-1}{2}}}\mathbf  u_s^\infty(\hat{\mathbf x})+\mathcal O(\vert \mathbf x\vert ^{-\frac{n+1}{2}}),\ \mathbf u_s^\infty({\hat{\mathbf x}})\cdot\hat{\mathbf x}=0
\end{equation}
which holds for all directions  $\hat{\mathbf x}:=\frac{\mathbf x}{\vert \mathbf x\vert }\in \mathbb S^{n-1}$ $(n=2,3)$ as $\vert \mathbf x\vert\to \infty$. 
Here, the scalar-valued function $u_p^\infty(\hat{\mathbf x})$ and the vector-valued function $\mathbf u_s^\infty(\hat{\mathbf x})$ are  both defined on the unit sphere $\mathbb S^{n-1}$ characterizing the  far-field pattern of the pressure wave $\mathbf u_p$ and the shear wave $\mathbf u_s$, respectively.
Denote the total far-field pattern $\mathbf u^\infty$ associated with $\mathbf u$ as follows
$$
\mathbf u^\infty:= u_p^\infty(\hat{\mathbf x}){\hat{\mathbf x}}+\mathbf u_s^\infty(\hat{\mathbf x}),
$$ then it is easy to know that the correspondence between $\mathbf u$ and $\mathbf u^\infty$ are one to one by Rellich's Lemma\cite{P1998}.

Next we consider the elastic scattering problem by the interaction of an incident wave $\mathbf u^i$ and an inhomogeneous medium scatterer $\Omega$, where $\Omega\Subset \mathbb R^n,n\in \{2,3\}$ denotes the support of the  inhomogeneous  elastic medium scatterer with the physical configurations $(\lambda,\mu,V)$ within a uniform and homogeneous background space $\mathbb R^n\backslash \overline \Omega $, where $V\in L^\infty (\mathbb R^n  )$ and $supp(V)\subset\Omega $. Here $\lambda$ and $\mu$ are Lam\'e constants satisfying \eqref{eq:strong conv} and $(1+V)$ denotes the inhomogeneous medium  density. 
Let $\mathbf u^{i}$ be an incident field which is a complex-valued entire solution to the Lam\'e system
$$\mathcal L \mathbf u^i+\omega^2\mathbf u^i=\mathbf 0\ \mbox{in}\ \mathbb R^n,\ n\in \{2,3\}.$$
Due to the interaction of the incident wave $\mathbf u^i$ with the inhomogeneous scatterer $\Omega$, the scattered displacement field  $\mathbf u$ is generated and  the total     field $\mathbf u^t$ is the superposition of $\mathbf u^i$ and $\mathbf u$, namely $\mathbf u^t:=\mathbf u^i+\mathbf u$, where
$\mathbf u^t$ satisfies the following Lam\'e system:
\begin{equation}\label{eq:mesca}
\mathcal L\mathbf u^t+\omega^2(1+V)\mathbf u^t=\mathbf 0\ \mbox{in}\ \mathbb R^n,\ n\in \{2,3\}.
\end{equation}  
Furthermore, the  {displacement}  field $\mathbf u$ satisfies    Helmholtz decomposition:
$$\mathbf u=\mathbf u_p+\mathbf u_s,$$
where $\mathbf u_p$ and $\mathbf u_s$ are the pressure wave and shear wave defined by \eqref{eq:decomp}, respectively. The {displacement}  field $\mathbf u$ fulfills the Kupradze radiation condition \eqref{eq:rad}. 
The well-posedness of \eqref{eq:mesca} can be found in \cite{P1998}, which  guarantees the unique solution $\mathbf u^t\in H^1_{\rm loc}(\mathbb R^n)^n$ to the scattering problem \eqref{eq:mesca}.  
Moreover, the {displacement}  field $\mathbf u$ has the asymptotic expansion
\begin{equation}\notag
\mathbf u=\frac{e^{\mathrm i\kappa_p\vert \mathbf x\vert }}{\vert \mathbf x\vert ^{\frac{n-1}{2}}} u_p^\infty(\hat{\mathbf x})\hat{\mathbf x}+\frac{e^{\mathrm i\kappa_s\vert \mathbf x\vert }}{\vert \mathbf x\vert ^{\frac{n-1}{2}}}\mathbf  u_s^\infty(\hat{\mathbf x})+\mathcal O(\vert \mathbf x\vert ^{-\frac{n+1}{2}}),\quad  \mathbf u_s^\infty({\hat{\mathbf x}})\cdot\hat{\mathbf x}=0.
\end{equation}
Hence $\mathbf u^\infty:= u_p^\infty(\hat{\mathbf x})\cdot {\hat{\mathbf x}}+\mathbf u_s^\infty$ is  the far-field pattern associated with  $\mathbf u$.

Throughout this paper, we denote a source  by $(\Omega; \mathbf{f})$ and a medium scatterer by $(\Omega; \lambda, \mu, V)$, where $\Omega$ represents a scatterer, and $\mathbf{f}$ or $(\lambda, \mu, V)$ characterizes its physical configuration. Specifically, in the case of a medium scatterer, the incident wave $\mathbf{u}^i$ is also regarded as part of the physical setup, since its interaction with the inhomogeneous medium $\Omega$ gives rise to a total displacement field $\mathbf{u}$, which behaves similarly to a source. The inverse problems studied in this work aim to recover the shape and location of $\Omega$, independent of its physical configuration $\mathbf{f}$ or $(\lambda, \mu, V)$, by using information from the far-field pattern $\mathbf{u}^\infty$ obtained from a \emph{single far-field measurement}. This is formally described by the following mapping:
\begin{equation}\label{eq:inv}
\mathbf{u}^\infty(\hat{\mathbf{x}})\ (\forall \hat{\mathbf{x}} \in \mathbb{S}^{n-1}) \longmapsto \partial \Omega.
\end{equation}
Here, $\mathbf{u}^\infty$ denotes the far-field pattern arising from either a source or medium scattering configuration. A single far-field measurement means that the frequency $\omega$ is fixed for the source scattering problem, and in the case where $\Omega$ is a medium scatterer, the far-field pattern is generated by a \emph{single} incident wave $\mathbf{u}^i$ and fixed frequency $\omega$. It is important to note that the inverse problem defined by \eqref{eq:inv} is \emph{nonlinear} and is formally determined by only one far-field measurement. Establishing uniqueness in recovering the shape and position of the scatterer $\Omega$ under such minimal data is a highly challenging problem in inverse scattering theory, a field with a long and rich history (cf.\ \cite{CX, DR2018}). To date, uniqueness results with finitely many far-field measurements have only been achieved under additional \emph{a-priori} assumptions on the scatterer's size or geometry (e.g., spherical or polyhedral shapes) \cite{CX, DR2018}.

Another focus of our study is the phenomenon of \emph{non-radiation}, 
characterized by the condition $\mathbf{u}^\infty \equiv \mathbf{0}$. In this case, the source $(\Omega; \mathbf{f})$ and the inhomogeneous medium scatterer are  said to be \emph{non-radiating} or \emph{radiationless}. 
In particular, when the medium scatterer is radiationless, Rellich's lemma implies that 
\[
\mathbf{u}^s = 0 \quad \text{in } \mathbb{R}^n \setminus \overline{\Omega}, \quad n \in \{2, 3\}.
\]
Consequently, the total field coincides with the incident field in the exterior of the scatterer: $\mathbf{u}^t = \mathbf{u}^i$ in $\mathbb{R}^n \setminus \overline{\Omega}$. In such a case, setting $(\mathbf{w}, \mathbf{v}) = (\mathbf{u}^t|_{\Omega}, \mathbf{u}^i|_{\Omega})$ leads to the following elastic transmission eigenvalue problem:
\begin{equation}\label{eq:trans}
\begin{cases}
\mathcal{L} \mathbf{w} + \omega^2(1 + V)\mathbf{w} = \mathbf{0} \quad &\text{in } \Omega, \\
\mathcal{L} \mathbf{v} + \omega^2 \mathbf{v} = \mathbf{0} \quad &\text{in } \Omega, \\
\mathbf{w} = \mathbf{v}, \quad T_\nu \mathbf{w} = T_\nu \mathbf{v} \quad &\text{on } \Gamma,
\end{cases}
\end{equation}
where $\Gamma = \partial \Omega$. The traction operator $T_\nu$ is defined as
\begin{align*}
T_\nu \mathbf{u} = 
\begin{cases}
2\mu \dfrac{\partial \mathbf{u}}{\partial \nu} + \lambda \nu \nabla \cdot \mathbf{u} + \mu \nu^\perp(\partial_2 u_1 - \partial_1 u_2), & n = 2, \\
2\mu \dfrac{\partial \mathbf{u}}{\partial \nu} + \lambda \nu \nabla \cdot \mathbf{u} + \mu \nu \times (\nabla \times \mathbf{u}), & n = 3,
\end{cases}
\end{align*}
where $\nu = (\nu_1, \nu_2)^\top$ is the unit outer normal on $\partial \Omega$ and $\nu^\perp = (-\nu_2, \nu_1)^\top$ in two dimensions, and $\nu = (\nu_1, \nu_2, \nu_3)^\top$ in three dimensions.
It is evident that $(\mathbf{w}, \mathbf{v}) = (\mathbf{0}, \mathbf{0})$ is a trivial solution to \eqref{eq:trans}. If there exists a positive number $\omega \in \mathbb{R}_{+}$ along with a nontrivial pair $(\mathbf{w}, \mathbf{v}) \in H^1(\Omega)^n \times H^1(\Omega)^n$ satisfying \eqref{eq:trans}, then $\omega$ is called an \emph{elastic transmission eigenvalue}, and $(\mathbf{w}, \mathbf{v})$ are the corresponding \emph{transmission eigenfunctions}.

\subsection{Connections to existing studies and  main contributions}

Non-radiating sources have significant practical applications across various areas of physics and engineering, including geological prospecting, electromagnetic detection, and cloaking technology. It is well known that non-radiation is excluded when the source support possesses certain geometric features. The study of non-radiating sources has a long and rich history, with extensive literature dedicated to this topic. In both acoustic and elastic source scattering, the sources must radiate when the support contains a convex or non-convex corner or edge on the boundary \cite{B2018,BLY}. For acoustic sources with smooth boundaries, non-radiation is also precluded if the support is sufficiently small or if the boundary includes high-curvature points \cite{BL2021}. A similar development for radiating acoustic sources is presented in \cite{Hu1}, where the support includes weakly singular boundary points. Furthermore, the characterization of non-radiating sources for elastic waves in anisotropic and inhomogeneous media-encompassing both volume and surface sources-has been addressed in \cite{KW21}.

In the context of inhomogeneous medium scattering, non-radiation in inverse scattering problems implies that the scatterer becomes undetectable by external measurements, as the incident probing wave remains unperturbed and the resulting scattered field vanishes identically. However, non-radiation is ruled out for scatterers that contain polygonal, polyhedral, or circular conical corners, as such geometric features inherently induce scattering \cite{CX,PSV,EH18,BPS}. In addition, it has been rigorously shown that acoustic media with smooth boundaries containing points of sufficiently high curvature cannot be invisible \cite{BL2021}. Similar results concerning the characterization of non-radiating scatterers in isotropic or anisotropic, and inhomogeneous acoustic media, can be found in \cite{CV,SS21,CVX23,KSS}.

Transmission eigenvalue problems, which originate from the study of non-radiating phenomena in medium scattering, are a class of non-elliptic and non-self-adjoint eigenvalue problems \cite{CCH}. Spectral studies of transmission eigenvalues involve the analysis of their existence, discreteness, infiniteness, and Weyl-type laws. Developments regarding the vanishing behavior of acoustic and elastic transmission eigenfunctions at convex or non-convex corners are discussed in \cite{B2018,BLY}. When the domain involved in a transmission eigenvalue problem contains a high-curvature point, a delicate relationship is revealed between the curvature at that point and the associated transmission eigenfunctions \cite{BL2021}.

In inverse scattering theory, the unique identifiability of determining the shape of scatterers from a single far-field measurement is a long-standing and challenging problem. The problem of uniquely reconstructing the location and shape of a scatterer from a single far-field measurement is commonly referred to as the Schiffer problem \cite{DR2018,DLbook}. To date, uniqueness results have only been established under certain a priori assumptions on the size or shape of the scatterer; see \cite{AR,CK,DR2018,LZ08} for detailed discussions.  For inhomogeneous elastic media, uniqueness in the determination of shape using multiple far-field measurements has been addressed in \cite{Hahner1,P1998}. Furthermore, uniqueness results by a single far-field measurement for convex polygonal elastic medium scatterers under generalized transmission boundary conditions have been established in \cite{DLS2021}.



\medskip
 The main contributions of this paper can be summarized in several key points:

\begin{enumerate}
    \item[(i)] 
    Under the H\"older regularity assumption on the elastic source within its support, we rigorously demonstrate that the source must radiate at any frequency when its intensity satisfies certain quantitative conditions (see \eqref{eq:small}) involving the diameter of the support and the frequency. This result indicates that if the support of the elastic source is sufficiently ``small'' (in terms of the wavelength), then radiation necessarily occurs. This provides a geometric characterization---specifically in terms of the diameter---of the support for radiating elastic sources. Further details can be found in Theorem \ref{thm:small} and Remark \ref{rem:radiating}. Conversely, if an elastic source is non-radiating, then the diameter of its support must possess a positive lower bound that depends solely on the source intensity and the frequency (cf.\ Corollary \ref{cor1}). This offers a corresponding geometric characterization for non-radiating elastic sources in terms of the support's diameter.

    \item[(ii)] 
    
    When the support of the source contains the admissible \( K \)-curvature points (Definition~\ref{def:ad-hi-cur}) on its boundary, we establish a rigorous relationship between the source's radiation intensity and the curvature (Theorem~\ref{thm:source rad}). This result further implies that if an admissible \( K \)-curvature point has sufficiently large curvature, the source must radiate.
Conversely, for a non-radiating source whose support includes an admissible \( K \)-curvature point, we prove that the intensity admits an upper bound that decays with increasing curvature (Corollary~\ref{cor:non-rad}). Thus, the bound becomes arbitrarily small when the curvature is sufficiently large.

    \item[(iii)] We extend the results from (i) and (ii) for the source scattering problem to the inhomogeneous medium scattering problem. 
Under the two aforementioned geometric structures, we derive quantitative characterizations of both the physical parameters (related to the density of a radiating inhomogeneous medium scatterer) and the incident wave; see Theorems~\ref{thm:medsmall} and~\ref{thm:medKpoint1} for details.


Furthermore, we establish geometric properties of transmission eigenfunctions associated with the transmission eigenvalue problem \eqref{eq:trans} in two distinct scenarios:

\begin{itemize}
    \item Let $\varepsilon = d(\Omega)\omega$, where $\omega \in \mathbb{R}_+$ is a transmission eigenvalue for \eqref{eq:trans}, and $d(\Omega)$ denotes the diameter of the domain $\Omega$. We demonstrate that the $L^\infty$-norm of the transmission eigenfunction on the boundary of $\Omega$ is bounded by quantities involving $V$ from \eqref{eq:trans} and $\varepsilon$ (Theorem~\ref{thm:tans1}). For sufficiently small $\varepsilon$, i.e., when the diameter of $\Omega$ is sufficiently small for a fixed transmission eigenvalue $\omega$, we rigorously prove that the corresponding transmission eigenfunction is nearly vanishing on the boundary of $\Omega$.
    
    \item If the boundary of the domain contains an admissible $K$-curvature point, we establish a precise relationship between the value of the transmission eigenfunction at this point and the curvature $K$ (Theorem~\ref{thm:trans2}). This result implies that the transmission eigenfunction is nearly vanishing at the admissible $K$-curvature point when the curvature $K$ is sufficiently large.
\end{itemize}

    \item[(iv)]  

   Using the non-radiating properties of elastic sources and media established in Corollaries \ref{cor:non-rads} and \ref{cor:non-rad} and Theorems \ref{thm:tans1} and \ref{thm:trans2}, we derive novel local and global unique identification results for determining the shape of radiating elastic sources and medium scatterers from a single far-field pattern under general conditions defined by admissible classes. The global uniqueness result determines the number of scatterer components. The local uniqueness result shows that if two scatterers produce identical far-field patterns, their difference cannot include an admissible $K$-curvature point or a small component.

\end{enumerate}

 In what follows,  we present an overview of the principal methodological approaches developed in this paper. 
 For the main contribution (i),  to investigate the geometrical properties of radiating elastic sources established in Theorem~\ref{thm:small}, we analyze the Lam\'e system \eqref{eq:lame}, a complex system of coupled partial differential equations governing vector-valued functions associated with pressure and shear waves. Using Helmholtz decomposition, we decouple the Lam\'e system into two vector-valued Helmholtz equations, enabling the derivation of translation-invariant $L^2$-estimates in Lemma~\ref{lem:ufL2} and the energy estimate \eqref{eq:energy} in Lemma~\ref{lem:uholder} for the scattered displacement $\mathbf{u}$ associated with a non-radiating source $\mathbf{f}$. These estimates depend on the diameter of $\Omega$, the operating frequency $\omega$, and the $L^2$-norm of $\mathbf{f}$ in $\Omega$. Leveraging Lemmas~\ref{lem:ufL2} and~\ref{lem:uholder}, we prove Theorem~\ref{thm:small} using a contradiction argument and the integral identity \eqref{eq:psim}. The geometrical properties of a non-radiating elastic source $\mathbf{f}$, with respect to the diameter of $\Omega$ and the frequency $\omega$, follow directly from Theorem~\ref{thm:small} and are detailed in Corollary~\ref{cor:non-rads}.

For the main contribution (ii), we utilize the so-called complex geometrical optics (CGO) solutions to  give a subtle asymptotic analysis near the high-curvature boundary point with respect to the parameter in CGO solutions. The presence of admissible $K$-curvature points significantly enhances the complexity of the asymptotic analysis. 
 To investigate the radiating properties of elastic sources with admissible $K$-curvature boundary points in Theorem \ref{thm:source rad}, we use a  contradiction argument  that requires analysis of the non-radiating property established in Corollary \ref{cor:non-rad}. 
The  investigation of non-radiating properties in elastic sources requires comprehensive analysis of the global behavior of displacement field.
We  derive a global regularity result in Lemma \ref{lem:rise reg} and a global $L^2$ estimate in Proposition \ref{prop:ubeta} for the   displacement field with non-radiating source. 
 In Proposition \ref{prop:ubeta}, using geometric techniques, we prove that the Jacobi matrix of a non-radiating displacement field $\EM u$ vanishes at admissible $K$-curvature points.
To obtain the aforementioned subtle asymptotic analysis, we employ  the CGO solutions in Lemma \ref{lem:cgo} to establish an   integral identity in Lemma \ref{lem:integral cur}. 
Through careful selection of the CGO parameter $\tau$, 
we  prove Theorem \ref{thm:source rad} and Corollary \ref{cor:non-rad}.

For the main contribution (iii), our analytical approach to the medium scattering problem follows methodology analogous to the source case. The medium scattering problem \eqref{eq:mesca} can be reformulated as an equivalent source scattering problem. The presence of the unknown total field in source term introduces significant analytical challenges. 
 By combining the well-posedness results for the direct problem (see Proposition \ref{prop:ust2}) with Theorems \ref{thm:small} and \ref{thm:source rad}, we prove Theorems \ref{thm:medsmall} and \ref{thm:medKpoint1}, which     characterize  the radiating properties for mediums. Furthermore, the geometric properties of transmission eigenfunctions established in Theorems \ref{thm:tans1} and \ref{thm:trans2} follow directly from  Corollaries \ref{cor:non-rads} and \ref{cor:non-rad}.   For the main contribution (iv),  by using a contradiction argument incorporating the non-radiating properties for the sources and medium,   we establish both local and global uniqueness results for scatterer shape determination by a single far-field pattern measurement in Theorems \ref{thm:in2loc}--\ref{thm:medinv2}.

The paper is organized as follows.  In Section \ref{sec:2}, 
we derive a precise quantitative relationship between the intensity of a  radiating elastic source and the diameter of its support (in terms of the wavelength). 
Section \ref{sec:kpoint} presents  a rigorous mathematical result  when the support of source has high-curvature boundary points.
In section \ref{sec:3}, we extend the findings from sections \ref{sec:2}--\ref{sec:kpoint} to the inhomogeneous medium scattering problem and investigate the geometric properties of transmission eigenfunctions.
In section \ref{sec:inverse}, we establish the local and global uniqueness results for inverse source and medium scattering problems by a single far-field pattern measurement. 

\section{Radiating property of elastic sources with small support
}\label{sec:2}

In this section, 
we shall derive a mathematical relationship between the intensity of a radiating or non-radiating elastic source and the size of its support, implying that a source with a relatively small support compared to its intensity and frequency cannot be non-radiating in $\mathbb{R}^n$ $(n=2,3)$. The specific details are presented in the following theorem.
Before this, we introduce the H\"older seminorm for a vector-valued function  $\boldsymbol\varphi=(\varphi_1 (\mathbf x),\dots,\varphi_n (\mathbf x))^{\top}$ with $\mathbf x \in \Omega$ as follows:
$$[\boldsymbol\varphi]_{\alpha,\Omega}:=\max\{[\varphi_i]_{\alpha,\Omega}\}_{i=1}^n,$$
where
$$[\varphi_i]_{\alpha,\Omega}:=\sup_{\mathbf x\not=\mathbf y,
\mathbf x,\mathbf y\in \Omega}\frac{|\varphi_i(\mathbf x)-\varphi_i(\mathbf y)\vert}{|\mathbf x-\mathbf y|^\alpha}.$$

\begin{thm}\label{thm:small}
Let $\Omega\subset \mathbb R^n$, $n\in\{2,3\}$ be a bounded Lipshcitz domain with a connected complement $\mathbb R^n\setminus {\overline{\Omega}}$. For some    component  $\Omega_c$ of $\Omega$, suppose that $\boldsymbol\varphi\in L^\infty(\mathbb R^n)^n$ satisfies
\begin{equation}\label{eq:phi23}
\begin{cases}
	\boldsymbol\varphi \in C^{\delta}(\overline{\Omega_c})^2, &\delta \in(0,1] ,\\
	\boldsymbol\varphi \in C^{\delta}(\overline{\Omega_c})^3, &\delta \in(0,1/2].
\end{cases}
\end{equation}
 Let $\mathbf u\in H_{loc}^2(\mathbb R^n)^n$ be the unique outgoing solution to \eqref{eq:lame} associated with $\mathbf f=\chi_{\Omega}\boldsymbol\varphi$. Let $\varepsilon=d(\Omega_c)\omega$ be the diameter of $\Omega_c$ in units of $\omega^{-1}$, where $d(\Omega_c)$ is the Euclidean diameter of $\Omega_c$. If 
\begin{equation}\label{eq:small}
\frac{\sup_{\partial \Omega_c}|\boldsymbol\varphi|}{\omega^{-\delta}[\boldsymbol\varphi]_{\delta,\Omega_c}+\|\boldsymbol\varphi\|_{L^\infty(\Omega_c)}}>C\varepsilon^{\delta}(1+(1+\varepsilon)\varepsilon^{n/2}),
\end{equation}
where $C=C(n,\lambda,\mu)$ is a positive constant depending on $n,\lambda,\mu$, then $\mathbf u^{\infty}\not=\mathbf 0$.
\end{thm}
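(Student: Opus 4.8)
The plan is to argue by contradiction. Suppose the conclusion fails, so that the source $(\Omega;\mathbf f)$ is non-radiating, i.e. $\mathbf u^\infty\equiv\mathbf 0$; I will show that this forces the reverse of \eqref{eq:small}. By Rellich's lemma together with unique continuation for the Lam\'e system, $\mathbf u^\infty\equiv\mathbf 0$ gives $\mathbf u\equiv\mathbf 0$ in $\mathbb R^n\setminus\overline\Omega$. Since $\mathbf u\in H^2_{\rm loc}(\mathbb R^n)^n$ and $\partial\Omega_c\subset\partial\Omega$ borders the unbounded exterior (the complement is connected), the whole Cauchy data of $\mathbf u$ matches the exterior zero data; in particular $\mathbf u=\mathbf 0$, $\nabla\mathbf u=\mathbf 0$, and hence $T_\nu\mathbf u=\mathbf 0$ on $\partial\Omega_c$.

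The next step is to produce the two competing bounds that meet in \eqref{eq:small}. For the lower bound on the source I would use the integral identity \eqref{eq:psim}: testing \eqref{eq:lame} over $\Omega_c$ against a constant vector $\mathbf e\in\mathbb C^n$ and integrating by parts, the vanishing of $\nabla\mathbf u$ (equivalently $T_\nu\mathbf u$) on $\partial\Omega_c$ annihilates the elliptic part $\mathcal L\mathbf u$, leaving
\[
\omega^2\int_{\Omega_c}\mathbf u\cdot\mathbf e\,d\mathbf x=\int_{\Omega_c}\boldsymbol\varphi\cdot\mathbf e\,d\mathbf x .
\]
Choosing $\mathbf e=\overline{\boldsymbol\varphi(\mathbf x_0)}/|\boldsymbol\varphi(\mathbf x_0)|$ at a point $\mathbf x_0\in\partial\Omega_c$ realizing $\sup_{\partial\Omega_c}|\boldsymbol\varphi|$, and writing $\boldsymbol\varphi=\boldsymbol\varphi(\mathbf x_0)+(\boldsymbol\varphi-\boldsymbol\varphi(\mathbf x_0))$, the H\"older seminorm controls the remainder by $[\boldsymbol\varphi]_{\delta,\Omega_c}\,d(\Omega_c)^\delta$, so that
\[
|\Omega_c|\sup_{\partial\Omega_c}|\boldsymbol\varphi|\le \omega^2\|\mathbf u\|_{L^1(\Omega_c)}+|\Omega_c|\,[\boldsymbol\varphi]_{\delta,\Omega_c}\,d(\Omega_c)^\delta .
\]

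For the upper bound on $\mathbf u$ I would invoke the Helmholtz decomposition, reducing \eqref{eq:lame} to the two vector Helmholtz systems \eqref{eq:deu} for $\mathbf u_p,\mathbf u_s$, each of which also vanishes outside $\Omega$ by \eqref{eq:decomp}. Representing $\mathbf u_p,\mathbf u_s$ through the outgoing fundamental solutions and estimating by Young's inequality yields the translation-invariant $L^2$-bound of Lemma~\ref{lem:ufL2}, whose constant is governed by the $L^1$-norm of the fundamental solution over a ball of diameter $d(\Omega_c)$ and therefore scales, after writing $d(\Omega_c)=\varepsilon\omega^{-1}$, as a fixed power of $\varepsilon$ times $\omega^{-2}$; the energy estimate \eqref{eq:energy} of Lemma~\ref{lem:uholder} supplies the remaining $\varepsilon$-dependence. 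Combining $\|\mathbf u\|_{L^1(\Omega_c)}\le|\Omega_c|^{1/2}\|\mathbf u\|_{L^2(\Omega_c)}$ with these estimates and $\|\mathbf f\|_{L^2(\Omega_c)}\le|\Omega_c|^{1/2}\|\boldsymbol\varphi\|_{L^\infty(\Omega_c)}$, then dividing by $|\Omega_c|$ and collecting the powers of $\varepsilon$, I expect to reach
\[
\sup_{\partial\Omega_c}|\boldsymbol\varphi|\le C\varepsilon^{\delta}\bigl(1+(1+\varepsilon)\varepsilon^{n/2}\bigr)\bigl(\omega^{-\delta}[\boldsymbol\varphi]_{\delta,\Omega_c}+\|\boldsymbol\varphi\|_{L^\infty(\Omega_c)}\bigr),
\]
which is precisely the negation of \eqref{eq:small} and hence the desired contradiction.

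The main obstacle is the sharp, \emph{dimension- and frequency-uniform} form of the $L^2$ and energy estimates behind Lemmas~\ref{lem:ufL2} and~\ref{lem:uholder}, so that the final constant carries the exact structure $C\varepsilon^\delta(1+(1+\varepsilon)\varepsilon^{n/2})$ rather than merely an unspecified power of $\varepsilon$. The delicate case is $n=2$, where the Helmholtz fundamental solution has a logarithmic singularity and only oscillatory decay, so bounding its $L^1$-norm over the ball of radius $d(\Omega_c)$ uniformly in $\kappa_p,\kappa_s$ requires separating the low- and high-frequency regimes; this is where the factor $(1+\varepsilon)$ should originate. A secondary technical point is to verify that the Helmholtz decomposition of the merely H\"older-continuous, compactly supported source $\mathbf f$ splits the $L^2$-norm with a translation-invariant constant, which is what ultimately lets every estimate depend on $\Omega_c$ only through its diameter.
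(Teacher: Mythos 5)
Your proposal is correct and follows the same overall strategy as the paper --- contradiction, Rellich's lemma to get vanishing Cauchy data on $\partial\Omega_c$, the mean-zero identity obtained by testing against constant vectors (your $\omega^2\int_{\Omega_c}\mathbf u\cdot\mathbf e\,d\mathbf x=\int_{\Omega_c}\boldsymbol\varphi\cdot\mathbf e\,d\mathbf x$ is exactly the paper's $\int_{\Omega_c}\boldsymbol\psi\,d\mathbf x=\mathbf 0$ with $\boldsymbol\psi=\boldsymbol\varphi-\omega^2\mathbf u$), and the H\"older decomposition of the source around the maximizing boundary point. Where you genuinely diverge is in handling the term $\omega^2 m(\Omega_c)^{-1}\int_{\Omega_c}\mathbf u\,d\mathbf x$: the paper absorbs $\omega^2\mathbf u$ into $\boldsymbol\psi$ and estimates $[\boldsymbol\psi]_{\delta,\Omega_c}$, which forces it to control $[\mathbf u]_{\delta,\mathbb R^n}$ via the Sobolev-embedding-based energy estimate \eqref{eq:energy} of Lemma~\ref{lem:uholder} (this is where the factor $(1+\varepsilon)\varepsilon^{n/2}$ originates), whereas you bound it directly by Cauchy--Schwarz and the $L^2$ estimate of Lemma~\ref{lem:ufL2}, getting $C\varepsilon\|\boldsymbol\varphi\|_{L^\infty(\Omega_c)}$, which is dominated by $C\varepsilon^{\delta}(1+(1+\varepsilon)\varepsilon^{n/2})\|\boldsymbol\varphi\|_{L^\infty(\Omega_c)}$ for every $\varepsilon>0$. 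Your route therefore bypasses Lemma~\ref{lem:uholder} entirely and is, for this theorem, the more economical argument; the paper needs the H\"older machinery anyway for its later results. Two points you should still tighten: (i) in the multi-component case, to apply Lemma~\ref{lem:ufL2} with the diameter $d(\Omega_c)$ rather than $d(\Omega)$ you must observe, as the paper does, that $\mathbf u|_{\Omega_c}$ extended by zero is itself the outgoing solution driven by $\chi_{\Omega_c}\boldsymbol\varphi$ (this uses $\overline{\Omega_c}\cap\overline{\Omega\setminus\Omega_c}=\emptyset$ and the connectedness of the complement); (ii) your sketch of how Lemma~\ref{lem:ufL2} is proved (Young's inequality with the fundamental solution) is not the paper's route --- it cites the translation-invariant estimate of Bl{\aa}sten--Sylvester after the Helmholtz decomposition, which yields the $d(\Omega)\omega^{-1}$ scaling directly and avoids the two-dimensional low/high-frequency case analysis you flag as the main obstacle; since the lemma is available as stated, you can simply invoke it.
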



\begin{rem}\label{rem:21}
    It is noted that in Theorem \ref{thm:small},  we emphasize  the diameter of any component $\Omega_c$ of $\Omega$, rather than the diameter of $\Omega$. Here, a component $\Omega_c$ of $\Omega$ means that $\overline {\Omega_c}\cap \overline{\Omega\setminus{\Omega_c}}=\emptyset.$  
   
    
\end{rem}
\begin{rem}\label{rem:radiating}
Theorem \ref{thm:small} reveals that, for a fixed source intensity on any component, when the support of the source is sufficiently small, \eqref{eq:small} holds naturally, indicating that the source must be radiating.	
\end{rem}

Corollary~\ref{cor:non-rads} follows immediately from Theorem~\ref{thm:small}, demonstrating that the non-radiating source intensity with frequency   admits an upper bound proportional to the scatterer diameter.
\begin{cor}\label{cor:non-rads}
Under the same setup as in Theorem \ref{thm:small}, if the source $\EM f=\chi_{\Omega}\boldsymbol{{\varphi}}$ is non-radiating, then it follows that
	\begin{equation}\label{eq:non-small}
\frac{\sup_{\partial \Omega_c}|\boldsymbol\varphi|}{\omega^{-\delta}[\boldsymbol\varphi]_{\delta,\Omega_c}+\|\boldsymbol\varphi\|_{L^\infty(\Omega_c)}}\leq C\varepsilon^{\delta}(1+(1+\varepsilon)\varepsilon^{n/2}),
\end{equation}
where $C$ is a positive constant independent of $\varepsilon$ and $\omega$.
\end{cor}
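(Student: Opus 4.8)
The plan is to obtain Corollary~\ref{cor:non-rads} directly as the logical contrapositive of Theorem~\ref{thm:small}, so that essentially no new analysis is required. Theorem~\ref{thm:small} asserts that whenever the strict inequality \eqref{eq:small} holds, one necessarily has $\mathbf u^\infty \neq \mathbf 0$; in other words, the source $\mathbf f = \chi_\Omega\boldsymbol\varphi$ radiates. The content of the corollary is precisely the negation of this implication, and the sentence preceding the statement (``Corollary~\ref{cor:non-rads} follows immediately from Theorem~\ref{thm:small}'') signals exactly this.

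First I would observe that the hypothesis ``the source is non-radiating'' means exactly $\mathbf u^\infty \equiv \mathbf 0$. Arguing by contradiction, suppose the desired bound \eqref{eq:non-small} failed; then its negation---the strict inequality \eqref{eq:small}, with the very constant $C=C(n,\lambda,\mu)$ supplied by Theorem~\ref{thm:small}---would hold. Applying Theorem~\ref{thm:small} under this assumption would force $\mathbf u^\infty\neq\mathbf 0$, contradicting the non-radiating hypothesis. Hence \eqref{eq:non-small} must hold, which is the assertion of the corollary. The logical point here is simply that the negation of ``$A > B$'' is ``$A \leq B$'', applied to the two sides of \eqref{eq:small}.

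Finally I would address the constant. Because the $C$ produced in Theorem~\ref{thm:small} depends only on $n$, $\lambda$, and $\mu$, it is automatically independent of the scaled diameter $\varepsilon$ and of the frequency $\omega$, which is exactly the independence claimed here. The only bookkeeping point is to confirm that all hypotheses of Theorem~\ref{thm:small}---namely the Lipschitz regularity of $\Omega$, the connectedness of $\mathbb R^n\setminus\overline\Omega$, and the H\"older regularity \eqref{eq:phi23} of $\boldsymbol\varphi$ on the component $\Omega_c$---are inherited verbatim from the ``same setup'' clause, so that no hidden assumption is introduced. Since the deduction is a one-line contrapositive, I anticipate no genuine obstacle; all of the substantive work resides in the proof of Theorem~\ref{thm:small} itself (the Helmholtz decomposition, the translation-invariant $L^2$-estimate of Lemma~\ref{lem:ufL2}, the energy estimate \eqref{eq:energy} of Lemma~\ref{lem:uholder}, and the integral identity \eqref{eq:psim}).
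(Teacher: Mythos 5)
Your proposal is correct and matches the paper exactly: the paper gives no separate proof of Corollary~\ref{cor:non-rads}, stating only that it ``follows immediately from Theorem~\ref{thm:small}'', which is precisely the contrapositive argument you spell out. The observation that the constant $C=C(n,\lambda,\mu)$ is automatically independent of $\varepsilon$ and $\omega$ is the right (and only) bookkeeping point.
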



According to Theorem \ref{thm:small}, the radiation must occur if the support of a source is sufficiently small. This allows us to investigate the size of the support of a non-radiating source. Corollary \ref{cor1} demonstrates that the diameter of any component of the support of a non-radiating source cannot be arbitrarily small; instead, it must have a positive lower bound that depends on the source intensity.


\begin{cor}\label{cor1}
Assume that the source $\mathbf f=\chi_{\Omega}\boldsymbol \varphi $ is non-radiating, where $\boldsymbol{\varphi}$ is given in \eqref{eq:phi23}. It holds that
\begin{equation}\label{eq:non-rad}
	{\rm diam}(\Omega_c)\geq 
\min \left (1, \left ( {c_{n,\lambda,\mu}} \frac{\sup_{\partial \Omega_c}|\boldsymbol\varphi|}{\omega^{-\delta}[\boldsymbol\varphi]_{\delta,\Omega_c}+\|\boldsymbol\varphi\|_{L^\infty(\Omega_c)}} \right)^{\frac{1}{\delta}}\right)\omega^{-1},
\end{equation}
where  $\Omega_c$  is  any component of $\Omega$ and $c_{n,\lambda,\mu}$ is a positive constant only depending on $n,\lambda,\mu.$
\end{cor}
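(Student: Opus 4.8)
The plan is to deduce Corollary~\ref{cor1} directly from the non-radiating estimate \eqref{eq:non-small} of Corollary~\ref{cor:non-rads} by inverting that inequality into a lower bound on $\varepsilon = d(\Omega_c)\omega$. For brevity write
\[
\Xi := \frac{\sup_{\partial\Omega_c}|\boldsymbol\varphi|}{\omega^{-\delta}[\boldsymbol\varphi]_{\delta,\Omega_c}+\|\boldsymbol\varphi\|_{L^\infty(\Omega_c)}}
\]
for the dimensionless intensity ratio appearing on the left of \eqref{eq:small} and \eqref{eq:non-small}. Since $\EM f=\chi_\Omega\boldsymbol\varphi$ is assumed non-radiating, Corollary~\ref{cor:non-rads} gives $\Xi\le C\varepsilon^{\delta}\bigl(1+(1+\varepsilon)\varepsilon^{n/2}\bigr)$ with $C=C(n,\lambda,\mu)$, and the whole task is to solve this for $\varepsilon$ from below and then translate back to ${\rm diam}(\Omega_c)=\varepsilon\,\omega^{-1}$. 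The only obstruction to a clean inversion is that the factor $1+(1+\varepsilon)\varepsilon^{n/2}$ grows without bound as $\varepsilon\to\infty$, so the upper bound in \eqref{eq:non-small} cannot be inverted uniformly in $\varepsilon$; this is exactly what forces the truncation by $1$ (equivalently the cutoff ${\rm diam}(\Omega_c)\ge\omega^{-1}$) in the statement, and it is handled by a dichotomy on the size of $\varepsilon$.

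First I would split into two regimes. If $\varepsilon\ge 1$, then ${\rm diam}(\Omega_c)=\varepsilon\,\omega^{-1}\ge\omega^{-1}$, and since the right-hand side of \eqref{eq:non-rad} is at most $\omega^{-1}$ (the minimum being taken with $1$), the desired bound holds trivially. If instead $\varepsilon<1$, then $1+\varepsilon<2$ and $\varepsilon^{n/2}<1$, so that $1+(1+\varepsilon)\varepsilon^{n/2}<3$; substituting into \eqref{eq:non-small} yields $\Xi<3C\varepsilon^{\delta}$, whence $\varepsilon>(\Xi/(3C))^{1/\delta}$. Setting $c_{n,\lambda,\mu}:=(3C)^{-1}$, which depends only on $n,\lambda,\mu$ through the constant $C=C(n,\lambda,\mu)$ of Theorem~\ref{thm:small}, this reads $\varepsilon>(c_{n,\lambda,\mu}\Xi)^{1/\delta}\ge\min\bigl(1,(c_{n,\lambda,\mu}\Xi)^{1/\delta}\bigr)$.

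Finally I would combine the two regimes: in either case $\varepsilon\ge\min\bigl(1,(c_{n,\lambda,\mu}\Xi)^{1/\delta}\bigr)$, and multiplying through by $\omega^{-1}$ together with ${\rm diam}(\Omega_c)=\varepsilon\,\omega^{-1}$ recovers exactly \eqref{eq:non-rad}. I do not expect any genuine analytic difficulty here, since all the substance is already contained in Theorem~\ref{thm:small}/Corollary~\ref{cor:non-rads}; the only point requiring care is the bookkeeping that produces the $\min(1,\cdot)$ and the $\omega^{-1}$ scaling, which faithfully records that the bound \eqref{eq:non-small} is invertible only while $\varepsilon$ remains bounded, i.e. while the support is small relative to the wavelength.
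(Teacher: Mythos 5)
Your proposal is correct and follows essentially the same route as the paper: invoke Corollary~\ref{cor:non-rads}, split on whether $\varepsilon=d(\Omega_c)\omega$ is below or above $1$, and in the small-$\varepsilon$ regime bound $1+(1+\varepsilon)\varepsilon^{n/2}$ by an absolute constant so that \eqref{eq:non-small} can be inverted to $\varepsilon>(c_{n,\lambda,\mu}\Xi)^{1/\delta}$. Your write-up is in fact slightly more explicit than the paper's about why the $\min(1,\cdot)$ truncation is needed, but there is no substantive difference.
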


\begin{proof}
Since the source is non-radiating, \eqref{eq:non-small} directly follows from Corollary \ref{cor:non-rads}. If $d(\Omega_c) < \omega^{-1} $, then it follows that $ \varepsilon < 1 $, and
 \begin{equation}\label{eq:non-rad2}
\frac{\sup_{\partial \Omega_c}|\boldsymbol\varphi|}{\omega^{-\delta}[\boldsymbol\varphi]_{\delta,\Omega_c}+\|\boldsymbol\varphi\|_{L^\infty(\Omega_c)}}<c_{n,\lambda,\mu}\varepsilon^\delta. 
\end{equation}
Combining \eqref{eq:non-small} with \eqref{eq:non-rad2}, it is clear to obtain \eqref{eq:non-rad}. Therefore, the proof is complete.
\end{proof}


Before proving Theorem \ref{thm:small}, we introduce the following critical lemmas that provide essential a-priori estimates. Lemma \ref{lem:ufL2} establishes a translation-invariant $L^2$ estimate for the Lam\'e system, analogous to those for the Laplacian operator in \cite{BS}.
\begin{lem}\label{lem:ufL2}
    Let $\Omega\subset \mathbb R^n$, $n\in \{2,3\}$ be a bounded Lipschitz domain. 
Consider the scattering problem \eqref{eq:lame} with the source term $\EM f=\chi_{\Omega}\boldsymbol{\varphi}$, where $\boldsymbol {\varphi}\in L^2(\mathbb R^n)^n$. Then one has the estimate as follows:
\begin{align}
    \|\EM u\|_{L^2(\Omega)^n}\leq C_{n,\lambda,\mu}d(\Omega)\omega^{-1}\|\EM f\|_{L^2(\Omega)^n}.\label{eq:uf}
\end{align}
\end{lem}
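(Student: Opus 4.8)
The plan is to reduce the vectorial Lam\'e estimate to the scalar translation-invariant estimate for the Helmholtz operator by means of the Helmholtz decomposition \eqref{eq:dehelm}--\eqref{eq:deu}. First I would record that $\mathbf f=\chi_\Omega\boldsymbol\varphi\in L^2(\Omega)^n$ splits as $\mathbf f=\mathbf f_p+\mathbf f_s$ with $\mathbf f_p,\mathbf f_s\in L^2(\Omega)^n$, $\nabla\times\mathbf f_p=\mathbf 0$, $\nabla\cdot\mathbf f_s=\mathbf 0$, and that the corresponding outgoing displacement splits accordingly as $\mathbf u=\mathbf u_p+\mathbf u_s$, where the pressure and shear parts solve the decoupled vector Helmholtz systems in \eqref{eq:deu} with wave numbers $\kappa_p,\kappa_s$ from \eqref{eq:wave number}, each obeying the Kupradze radiation condition \eqref{eq:rad}. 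Since the decomposition \eqref{eq:dehelm} keeps $\mathbf f_p,\mathbf f_s$ supported in $\overline{\Omega}$ and is bounded on $L^2$ (cf.\ \cite{FKS}), one has $\|\mathbf f_p\|_{L^2(\Omega)^n}+\|\mathbf f_s\|_{L^2(\Omega)^n}\le C\|\mathbf f\|_{L^2(\Omega)^n}$, so it suffices to bound $\mathbf u_p$ and $\mathbf u_s$ separately.

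The central step is the scalar estimate. Each Cartesian component of $\mathbf u_p$ (respectively $\mathbf u_s$) is the outgoing solution of a scalar Helmholtz equation $\Delta v+\kappa_p^2 v=g$ (respectively with $\kappa_s$) whose source satisfies $\mathrm{supp}\,g\subset\overline{\Omega}$, hence $v=\Phi_{\kappa_p}\ast g$ with $\Phi_\kappa$ the outgoing fundamental solution. Here I would invoke the translation-invariant scalar Helmholtz $L^2$-bound, namely the Laplacian analogue established in \cite{BS},
\begin{equation}\notag
\|v\|_{L^2(\Omega)}\le C_n\, d(\Omega)\,\kappa^{-1}\,\|g\|_{L^2(\Omega)},
\end{equation}
whose constant depends only on $n$ and not on the position of $\Omega$. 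Summing over the $n$ components yields $\|\mathbf u_p\|_{L^2(\Omega)^n}\le C_n\, d(\Omega)\,\kappa_p^{-1}\|\mathbf f_p\|_{L^2(\Omega)^n}$ and the analogous bound for $\mathbf u_s$ with $\kappa_s$.

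Finally I would recombine the two pieces. Using \eqref{eq:wave number}, $\kappa_p^{-1}=\sqrt{\lambda+2\mu}\,\omega^{-1}$ and $\kappa_s^{-1}=\sqrt{\mu}\,\omega^{-1}$, so by the triangle inequality and the boundedness of the decomposition,
\begin{equation}\notag
\|\mathbf u\|_{L^2(\Omega)^n}\le \|\mathbf u_p\|_{L^2(\Omega)^n}+\|\mathbf u_s\|_{L^2(\Omega)^n}\le C_{n,\lambda,\mu}\,d(\Omega)\,\omega^{-1}\,\|\mathbf f\|_{L^2(\Omega)^n},
\end{equation}
which is exactly \eqref{eq:uf}.

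The hard part is the scalar translation-invariant estimate itself, specifically its sharp $d(\Omega)\,\kappa^{-1}$ scaling uniformly in the size and location of $\Omega$. After rescaling to $\kappa=1$ (so the rescaled domain has diameter $\varepsilon=\kappa\, d(\Omega)$) the target becomes an operator bound $\lesssim \varepsilon$ for convolution against $\Phi_1$ on a domain of diameter $\varepsilon$. In the regime $\kappa\,d(\Omega)\lesssim 1$ a Schur test applied to $|\Phi_\kappa(x-y)|$ already suffices, since the near-diagonal singularity is integrable (it in fact over-delivers a $d(\Omega)^2$ factor). The delicate regime is $\kappa\,d(\Omega)\gtrsim 1$, where the naive modulus bound loses the linear-in-$d(\Omega)$ scaling and one must exploit the oscillation of $\Phi_\kappa$ (equivalently a local/cutoff free-resolvent bound of the form $\|\chi_\Omega\,(\Delta+\kappa^2+\mathrm{i}0)^{-1}\,\chi_\Omega\|_{L^2\to L^2}\lesssim d(\Omega)\,\kappa^{-1}$). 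A secondary technical point is to ensure, via \cite{FKS}, that the Helmholtz decomposition can be chosen to keep $\mathbf f_p,\mathbf f_s$ supported in $\overline{\Omega}$ while remaining $L^2$-bounded, so that the scalar estimate applies with sources genuinely supported in $\Omega$.
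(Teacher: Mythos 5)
Your proposal is correct and follows essentially the same route as the paper's proof: decouple via the Helmholtz decomposition into pressure and shear Helmholtz systems, invoke the translation-invariant $L^2$ estimate of \cite{BS} for each, and recombine using $\kappa_p^{-1},\kappa_s^{-1}\sim\omega^{-1}$ together with the $L^2$-boundedness of the decomposition of $\mathbf f$ from \cite{FKS}. The additional discussion of how the scalar estimate in \cite{BS} is itself proved goes beyond what the paper does (it simply cites \cite[Lemma 2.1]{BS}), but does not change the argument.
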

\begin{proof}
Since  $\EM u$ satisfies the decomposition $\EM u=\EM u_p+\EM u_s$  and  \eqref{eq:deu}, it follows from \cite[Lemma 2.1]{BS} that the following a-priori estimate holds for  $\|\mathbf u_\iota\|_{L^2(\Omega)}$ ($\iota=p,s$): 
\begin{equation}\label{eq:uf1}
\|\mathbf u_\iota\|_{L^2(\Omega)^n}\leq C_n\kappa_\iota^{-1}d(\Omega)\|\mathbf f_\iota\|_{L^2(\Omega)^n}.
\end{equation}
Recalling that $\kappa_p$ and $\kappa_s$ are defined by \eqref{eq:wave number}, it yields that
\begin{align}
\|\EM  u\|_{L^2(\Omega)^n}&\leq \|\EM u_p\|_{L^2(\Omega)^n}+\|\EM u_s\|_{L^2(\Omega)^n}\leq C_nd(\Omega)(\kappa_p^{-1}\|\mathbf{f}_{{p}}\|_{L^2(\Omega)^n}+\kappa_s^{-1}\|\mathbf{f}_{{s}}\|_{L^2(\Omega)^n})\notag\\
&\leq C_{n,\lambda,\mu}d(\Omega)\omega^{-1} (\|\mathbf{f}_{{p}}\|_{L^2(\Omega)^n}+\|\mathbf{f}_{{s}}\|_{L^2(\Omega)^n}).\notag
\end{align}
Moreover,  the Helmholtz decomposition \eqref{eq:dehelm} for $\mathbf{f}$ in $L^2$ satisfies $\|\mathbf{f}_{{p}}\|_{L^2(\Omega)^n} + \|\mathbf{f}_{{s}}\|_{L^2(\Omega)^n} \leq 2\|\mathbf{f}\|_{L^2(\Omega)^n}$ (see \cite{FKS}). Therefore, we obtain \eqref{eq:uf}.

The proof is complete.
\end{proof}

In the following lemma, we consider a non-radiating source $  \mathbf{f}  $, where the scattered displacement $  \mathbf{u}  $ associated with $  \mathbf{f}  $ in the Lam\'e system \eqref{eq:lame} satisfies $  \mathbf{u} \equiv \mathbf{0}  $ in $  \mathbb{R}^n \setminus \Omega  $. We establish a H\"older regularity result for $  \mathbf{u}  $ in $  \mathbb{R}^n  $. Additionally, we derive a relationship between the global energy estimate \eqref{eq:energy} and the diameter of $  \Omega  $, the operating frequency $  \omega  $ and  the $  L^2  $-norm of $  \mathbf{f}  $ in $  \Omega  $, where the energy is expressed in terms of the $  L^\infty  $-norm and a H\"older-type seminorm of $  \mathbf{u}  $.

\begin{lem}\label{lem:uholder}
Let $\Omega\subset \mathbb R^n$, $n\in \{2,3\}$ be a bounded Lipshitz domain with a connected complement. Let $\mathbf u\in L_{loc}^2(\mathbb R^n)^n$ be an outgoing solution to $\mathcal L\mathbf u+\omega^2\mathbf u=\EM f$, where $\EM f=\chi_\Omega\boldsymbol\varphi$ with $\boldsymbol\varphi\in L^2(\mathbb R^n)^n$. If $\mathbf u=\mathbf 0$ in $\mathbb R^n\setminus \overline{\Omega}$, then one has $\mathbf u\in C^{\delta}(\mathbb R^n)^n$ and
\begin{equation}\label{eq:energy}
\|\mathbf u\|_{L^\infty(\mathbb R^n)^n}+\omega^{-\delta}[\mathbf u]_{\delta,\mathbb R^n}\leq C_{n,\lambda,\mu}\omega^{\frac{n}{2}-1}(\omega^{-1}+d(\Omega))\|\EM f\|_{L^2(\Omega)^n},
\end{equation}
   {where $\delta\in(0,1]$ for $n=2$,  and $\delta\in (0,1/2]$ for $n=3$}, and $C_{n,\lambda,\mu}\in \mathbb R_+$ is  a  positive constant only depending on $n,\lambda$ and $\mu$.
\end{lem}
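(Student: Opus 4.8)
The plan is to upgrade the translation-invariant $L^2$ bound of Lemma~\ref{lem:ufL2} to a global $H^2(\mathbb{R}^n)$ bound for $\mathbf{u}$, and then to pass from $H^2$ to $C^\delta$ via Sobolev embedding. The non-radiating hypothesis $\mathbf{u}=\mathbf{0}$ in $\mathbb{R}^n\setminus\overline{\Omega}$ is what makes this feasible: it forces $\mathbf{u}$ to be \emph{compactly supported}, so that the norms $\|\mathbf{u}\|_{L^\infty(\mathbb{R}^n)}$ and $[\mathbf{u}]_{\delta,\mathbb{R}^n}$ are finite and a \emph{global} (not merely interior) embedding is available. To see that $\mathbf{u}\in H^2(\mathbb{R}^n)^n$, note that $\mathbf{f}=\chi_\Omega\boldsymbol\varphi\in L^2(\mathbb{R}^n)^n$ and $\mathbf{u}\in L^2_{loc}$ solve $\mathcal{L}\mathbf{u}=\mathbf{f}-\omega^2\mathbf{u}\in L^2_{loc}(\mathbb{R}^n)^n$ distributionally on all of $\mathbb{R}^n$; since $\mathcal{L}$ is a constant-coefficient elliptic operator, interior regularity gives $\mathbf{u}\in H^2_{loc}(\mathbb{R}^n)^n$, and compact support upgrades this to $\mathbf{u}\in H^2(\mathbb{R}^n)^n$.

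To keep the dependence on $\omega$ and on $d(\Omega)$ transparent, I would run the estimate after the dilation $\tilde{\mathbf{x}}=\omega\mathbf{x}$, $\tilde{\mathbf{u}}(\tilde{\mathbf{x}})=\mathbf{u}(\tilde{\mathbf{x}}/\omega)$, which turns \eqref{eq:lame} into the frequency-one system $\mathcal{L}\tilde{\mathbf{u}}+\tilde{\mathbf{u}}=\tilde{\mathbf{g}}$ with $\tilde{\mathbf{g}}=\omega^{-2}\tilde{\mathbf{f}}$, supported in $\tilde{\Omega}=\omega\Omega$ of diameter $d(\tilde{\Omega})=\varepsilon=\omega\,d(\Omega)$ and satisfying $\|\tilde{\mathbf{g}}\|_{L^2}=\omega^{n/2-2}\|\mathbf{f}\|_{L^2(\Omega)}$. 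The Fourier symbol of $\mathcal{L}$ is $-(\mu|\xi|^2 I+(\lambda+\mu)\xi\xi^\top)$, whose eigenvalues $\mu|\xi|^2$ and $(\lambda+2\mu)|\xi|^2$ are bounded below by $c_{n,\lambda,\mu}|\xi|^2$ under \eqref{eq:strong conv}; Plancherel then yields the constant-coefficient elliptic bound $\|D^2\tilde{\mathbf{u}}\|_{L^2}\le C_{n,\lambda,\mu}\|\mathcal{L}\tilde{\mathbf{u}}\|_{L^2}=C_{n,\lambda,\mu}\|\tilde{\mathbf{g}}-\tilde{\mathbf{u}}\|_{L^2}$. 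Inserting the rescaled Lemma~\ref{lem:ufL2} estimate $\|\tilde{\mathbf{u}}\|_{L^2}\le C_{n,\lambda,\mu}\,\varepsilon\,\|\tilde{\mathbf{g}}\|_{L^2}$ and controlling the gradient by the interpolation $\|D\tilde{\mathbf{u}}\|_{L^2}^2\le\|\tilde{\mathbf{u}}\|_{L^2}\|D^2\tilde{\mathbf{u}}\|_{L^2}$ gives $\|\tilde{\mathbf{u}}\|_{H^2(\mathbb{R}^n)}\le C_{n,\lambda,\mu}(1+\varepsilon)\|\tilde{\mathbf{g}}\|_{L^2}$.

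Finally, applying the Sobolev embedding $H^2(\mathbb{R}^n)\hookrightarrow C^\delta(\mathbb{R}^n)$ (with a dimensional constant) and undoing the dilation via $\|\mathbf{u}\|_{L^\infty}=\|\tilde{\mathbf{u}}\|_{L^\infty}$, $[\mathbf{u}]_{\delta,\mathbb{R}^n}=\omega^\delta[\tilde{\mathbf{u}}]_{\delta,\mathbb{R}^n}$ and $\|\tilde{\mathbf{g}}\|_{L^2}=\omega^{n/2-2}\|\mathbf{f}\|_{L^2}$ reproduces the right-hand side $C_{n,\lambda,\mu}\,\omega^{n/2-1}(\omega^{-1}+d(\Omega))\|\mathbf{f}\|_{L^2(\Omega)}$ of \eqref{eq:energy}, because $(1+\varepsilon)\,\omega^{n/2-2}=\omega^{n/2-1}(\omega^{-1}+d(\Omega))$. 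The part requiring the most care is the Sobolev embedding at the stated Hölder exponents: for $n=3$ the ratio $n/p=3/2$ is non-integral, so Morrey's inequality gives $H^2(\mathbb{R}^3)\hookrightarrow C^{0,1/2}$ and covers the full closed range $\delta\in(0,1/2]$; for $n=2$ the critical ratio $n/p=1$ only yields $H^2(\mathbb{R}^2)\hookrightarrow C^{0,\delta}$ for every $\delta\in(0,1)$, so the endpoint $\delta=1$ is borderline and must be read as ``$\delta$ arbitrarily close to $1$'' (which is all the later arguments use). Beyond this, the only delicate bookkeeping is ensuring that the diameter enters exactly through the factor $1+\varepsilon$ with $C_{n,\lambda,\mu}$ independent of both $\varepsilon$ and $\omega$, which the dilation argument makes automatic.
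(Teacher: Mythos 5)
Your proposal is correct and reaches the stated bound with the right $\omega$- and $d(\Omega)$-dependence, but it takes a noticeably different route at the key technical step. The paper first applies the Helmholtz decomposition, reducing the Lam\'e system to two vector Helmholtz equations $\Delta\mathbf u_\iota+\kappa_\iota^2\mathbf u_\iota=\mathbf f_\iota$ ($\iota=p,s$), rescales each by its own wave number $\kappa_\iota$, and then obtains the $H^2$ bound from the scalar identity $(1+|\xi|^2)\hat U=2\hat U-\hat F$ before Sobolev-embedding and recombining (which requires checking that $\mathbf u_p,\mathbf u_s$ and $\mathbf f_p,\mathbf f_s$ inherit the vanishing outside $\Omega$ and that $\|\mathbf f_p\|_{L^2}+\|\mathbf f_s\|_{L^2}\le 2\|\mathbf f\|_{L^2}$). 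You instead work directly with the matrix Fourier symbol $\mu|\xi|^2I+(\lambda+\mu)\xi\xi^\top$ of $-\mathcal L$, whose eigenvalues $\mu|\xi|^2$ and $(\lambda+2\mu)|\xi|^2$ are bounded below by $c_{n,\lambda,\mu}|\xi|^2$ under \eqref{eq:strong conv}; this buys you a single rescaling by $\omega$ and no bookkeeping for the decomposed pieces, at the price of invoking the strong convexity condition explicitly to invert the symbol. Both arguments share the same skeleton (rescale to unit frequency, Plancherel to get $H^2(\mathbb R^n)$ using compact support of $\mathbf u$, feed in the translation-invariant $L^2$ bound of Lemma~\ref{lem:ufL2}, then Sobolev embedding), so nothing essential is lost either way. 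Your remark about the endpoint $\delta=1$ for $n=2$ is well taken: $H^2(\mathbb R^2)$ does not embed into $C^{0,1}$, so the closed range $\delta\in(0,1]$ claimed in the statement (and used verbatim in the paper's own proof) should really be the open range $\delta\in(0,1)$; this is a minor blemish in the paper rather than in your argument, and the later applications only use $\delta$ strictly below $1$. The $n=3$ endpoint $\delta=1/2$ is genuinely attained via $W^{2,2}(\mathbb R^3)\hookrightarrow W^{1,6}(\mathbb R^3)\hookrightarrow C^{0,1/2}(\mathbb R^3)$, as you say.
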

\begin{proof}

Utilizing   Helmholtz decomposition, one can find that $\mathbf{f}=\chi_{\Omega}\boldsymbol{\varphi}(\EM x)$ and $\mathbf{u}$  satisfy \eqref{eq:dehelm} and \eqref{eq:deu}.
Denote
\begin{align}\notag
\mathbf U_{\iota}(\mathbf y)=\mathbf u_{\iota}(\mathbf y/\kappa_{\iota}),\quad
\mathbf F_{\iota}(\mathbf y)=\mathbf f_{\iota}(\mathbf y/\kappa_{\iota}),\quad
\Omega_{\kappa_{\iota}}=\{\mathbf y\in \mathbb R^n|\mathbf y/\kappa_{\iota}\in \Omega\},\notag
\end{align}
where and also in what follows $\iota\in \{ p, s\}$. Then one has $\Delta \mathbf U_{\iota}+\mathbf U_{\iota}=\mathbf F_{\iota}$ in $\mathbb R^n$ with $\mathbf U_{\iota} \in L_{loc}^2(\mathbb R^n)^n$, $\mathbf F_{\iota}\in L^2(\mathbb R^n)^n$ and $\mathbf U_{\iota}=\mathbf F_{\iota}=\mathbf 0$ in $\mathbb R^n\setminus\overline{\Omega_{\kappa_{\iota}}}$.

By virtue of  Fourier transform, we obtain $(-|\xi|^2+1)\mathbf {\hat U}_{\iota}(\xi)=\mathbf{ \hat F}_{\iota}(\xi)$ for $\xi \in \mathbb R^n$. For given $\mathbf U_{\iota}=\mathbf 0$ in $\mathbb R^n\setminus \overline{\Omega_\iota}$, it implies that $\mathbf U_{\iota}$ belongs to $L^2(\mathbb R^n)^n$. 
Applying the Plancherel theorem, one can derive that each component of $\mathbf U_{\iota}$ and $\mathbf F_{\iota}$, denoted by $ U_{\iota_{i}}$ and $F_{\iota_i}$ satisfy that $U_{\iota_{i}}\in H^2(\mathbb R^n)$ $(i=1,\dots,n)$ and 
\begin{align}
\|U_{\iota_{i}}\|_{H^2(\mathbb R^n)}=&\left\| (1+|\cdot|^2) {\hat{ U}}_{\iota_{i}}\right\|_{L^2(\mathbb R^n)}=\left\|2{\hat U}_{\iota_{i}}- {\hat F}_{\iota_{i}}\right\|_{L^2(\mathbb R^n)}\notag\\
&\leq \| F_{\iota_{i}}\|_{L^2(\mathbb R^n)}+2\| U_{\iota_{i}}\|_{L^2(\mathbb R^n)}.\notag
\end{align}
 Hence, it yields that
\begin{align}\notag
\|\mathbf U_{\iota}\|_{H^2(\mathbb R^n)^n}&=\left(\sum_{i=1}^n\|   U_{\iota_{i}} \|_{H^2(\mathbb R^n)^n}^2\right)^{\frac{1}{2}}\leq \left( \sum_{i=1}^n\left( 2\|  U_{\iota_i}\|_{L^2(\mathbb R^n) }+\|  F_{\iota_i}\|_{L^2(\mathbb R^n) }\right)^2\right)^{\frac{1}{2}}\notag\\
&\leq \sqrt{2}(2\|\mathbf U_{\iota}\|_{L^2(\mathbb R^n)^n}+\|\mathbf F_{\iota}\|_{L^2(\mathbb R^n)^n}).\notag
\end{align}

By virtue of Sobolev embedding theorem, we know that there exists  a positive constant $C_n$ such that $H^2(\mathbb{R}^n)^n \hookrightarrow C^{\delta}(\mathbb{R}^n)^n$, { $\delta\in (0,1]$ for $n=2$ and $\delta\in (0,1/2]$ for $n=3$} with
$\|\mathbf U_{\iota}\|_{C^{\delta}}\leq C_n\|\mathbf U_{\iota}\|_{H^2}$.
Therefore,  one has
$$\|\mathbf U_{\iota}\|_{C^{\delta}(\mathbb R^n)^n}\leq C_n(2\|\mathbf U_\iota\|_{L^2(\Omega_{\kappa_{\iota}})^n}+\|\mathbf F_{\iota}\|_{L^2(\Omega_{\kappa_{\iota}})^n}).$$
Returning to the non-scaled variable $\mathbf{x}$, one can demonstrate that:
\begin{equation}\label{eq:1}
\|\mathbf u_\iota\|_{L^\infty(\mathbb R^n)^n}+\kappa_{\iota}^{-\delta}[\mathbf u_\iota]_{\delta,\mathbb R^n}\leq C_n\kappa_{\iota}^{n/2}(\kappa_{\iota}^{-2}\|\mathbf f_{\iota}\|_{L^2(\Omega)^n}+2\|\mathbf u_\iota\|_{L^2(\Omega)^n}).
\end{equation}
Recalling that $\iota\in\{p,s\}$, then combining \eqref{eq:1} with \eqref{eq:uf1}, one obtains
\begin{align}
\|\mathbf u_{ p}\|_{L^\infty(\mathbb R^n)^n}+&\|\mathbf u_{ s}\|_{L^\infty(\mathbb R^n)^n}+\kappa_{ p}^{-\delta}[\mathbf u_{ p}]_{\delta,\mathbb R^n}+\kappa_{ s}^{-\delta}[\mathbf u_{ s}]_{\delta,\mathbb R^n}\notag \\
&\leq C_{n}
\bigg[(\kappa_{ p}^{\frac{n}{2}-1}(\kappa_{ p}^{-1}+d(\Omega))\|\mathbf f_{ p}\|_{L^2(\Omega)^n}\notag\\
&+\kappa_{ s}^{\frac{n}{2}-1}(\kappa_{ s}^{-1}+d(\Omega))\|\mathbf f_{s}\|_{L^2(\Omega)^n}\bigg].\label{eq:kpsf}
\end{align}
Following the proof argument of Lemma \ref{lem:ufL2}, we can simplify \eqref{eq:kpsf} to
\begin{align}
    \|\mathbf u_{ p}\|_{L^\infty(\mathbb R^n)^n}&+\|\mathbf u_{ s}\|_{L^\infty(\mathbb R^n)^n}+\omega^{-\delta}[\mathbf u_{ p}]_{\delta,\mathbb R^n}+\omega^{-\delta}[\mathbf u_{ s}]_{\delta,\mathbb R^n}\notag\\
&\leq C_{n,\lambda,\mu}
\omega^{\frac{n}{2}-1}(\omega^{-1}+d(\Omega))\|\mathbf f\|_{L^2(\Omega)^n}\notag.
\end{align}
Recalling that $\mathbf u=\mathbf u_p+\mathbf u_s$, one observes that  $\|\mathbf u\|_{L^\infty(\mathbb R^n)^n}\leq \|\mathbf u_p\|_{L^\infty(\mathbb R^n)^n}+\|\mathbf u_s\|_{L^\infty(\mathbb R^n)^n}$ and $[\mathbf u]_{\delta,\mathbb R^n}\leq [\mathbf u_p]_{\delta,\mathbb R^n}+[\mathbf u_s]_{\delta,\mathbb R^n}$.  

The proof is complete.
\end{proof}


Now we present the proof of Theorem \ref{thm:small}.
\begin{proof}[Proof of Theorem \ref{thm:small}]
We first prove the theorem with the case of  $\Omega$ having  a single component, i.e. $\Omega=\Omega_c.$
We will prove the theorem by contradiction. Assume that $\mathbf u^\infty\equiv \mathbf 0$. By Rellich's Lemma, it is evident that  $\mathbf u=\mathbf 0\in \mathbb R^n\setminus\overline{\Omega},\ n\in\{2,3\}$. Therefore, for given that $\mathbf{u} \in H_{\text{loc}}^2(\Omega)^n$, we can conclude that $\mathbf{u} \in H_0^2(\Omega)^n$.
Let  $\boldsymbol \psi := \boldsymbol\varphi|_{\Omega} - \omega^2\mathbf{u}|_{\Omega}$ and  $\mathbf{p} \in \partial \Omega$. Since $\mathbf{u}=\mathbf 0$ on $\partial \Omega$, we derive that
\begin{align}
	\boldsymbol \psi(\mathbf p)m(\Omega)=\boldsymbol\varphi(\mathbf p)m(\Omega)=\int_{\Omega}\boldsymbol\psi(\mathbf p)\mathrm d\mathbf x,\label{eq:psim}
\end{align}
where $m(\Omega)$  denotes  the measure of  $\Omega$.
Recalling that $\mathbf{u} \in H^2_0(\Omega)^n$ implies $T_\nu \mathbf{u} = \mathbf 0$ on $\partial \Omega$. Therefore,  we obtain
\begin{equation}\label{eq:psi0}
\int_{\Omega}\boldsymbol\psi(\mathbf x)\cdot \mathbf e_{i} \mathrm d\mathbf x=\int_{\Omega}(\boldsymbol \varphi-\omega^2\mathbf u)\cdot \mathbf e_i \mathrm d\mathbf x=\int_{\Omega}\mathcal L\mathbf u\cdot \mathbf e_i\mathrm d\mathbf x=\int_{\partial \Omega}\{T_{\nu}\mathbf u\}_i\mathrm d \sigma=  0,
\end{equation}
where $\mathbf e_i=(0,\dots,1,\dots,0)^\top\ (i=1,\dots,n)$ is the orthogonal basis of $\mathbb R^n$ and $\{T_\nu \mathbf u\}_i$ is the $i-th$ component of $T_\nu \mathbf u.$
From \eqref{eq:psi0}, we can deduce that $\int_{\Omega}\boldsymbol \psi(\mathbf x)\mathrm d\mathbf x=\mathbf 0$ since each   $\int_{\Omega} \psi_i(\mathbf x)\mathrm d\mathbf x=0$, where $\boldsymbol \psi=(\psi_1,\dots,\psi_n)^\top.$
It can directly show that
$$|\boldsymbol\varphi(\mathbf p)m(\Omega)|=\left|\int_{\Omega}(\boldsymbol\psi(\mathbf p)-\boldsymbol\psi(\mathbf x))\mathrm d\mathbf x\right|\leq[\boldsymbol\psi]_{\delta,\Omega}\int_{\Omega}|\mathbf p-\mathbf x|^{\delta}\mathrm d\mathbf x\leq[\boldsymbol\psi]_{\delta,\Omega}m(\Omega)(d(\Omega))^{\delta}.$$
By virtue of  Lemma \ref{lem:uholder} and with the fact that $\|\boldsymbol\varphi\|_{L^2(\Omega)^n}\leq\sqrt{m(\Omega)}\|\boldsymbol\varphi\|_{L^\infty(\Omega)^n}$, we can calculate that
\begin{align}
|\boldsymbol\varphi(\mathbf p)|&\leq (d(\Omega))^{\delta}[\boldsymbol\psi]_{\delta,\Omega}\leq(d(\Omega))^{\delta}([\boldsymbol\varphi]_{\delta,\Omega}+\omega^2[\mathbf u]_{\delta,\Omega})\notag\\
&\leq (d(\Omega))^{\delta}\left([\boldsymbol\varphi]_{\delta,\Omega}+C_{n,\lambda,\mu}\omega^2\omega^{\frac{n}{2}+\delta-1}(\omega^{-1}+d(\Omega))\sqrt{m(\Omega)}\|\boldsymbol\varphi\|_{L^\infty(\Omega)^n}\right).\notag
\end{align}
Since $\Omega$ is bounded, it can be enclosed within a sphere of radius $d(\Omega)$, implying that $m(\Omega)\leq C_n(d(\Omega))^n=C_n\varepsilon^n\omega^{-n}$. Therefore, we can deduce that  
\begin{align}
|\boldsymbol \varphi(\mathbf p)|
&\leq C_{n,\lambda,\mu}(\varepsilon\omega^{-1})^{\delta}\left( [\boldsymbol\varphi]_{\delta,\Omega}+ \omega^2\omega^{\frac{n}{2}+\delta-1} (\omega^{-1}+\varepsilon\omega^{-1})\varepsilon^{\frac{n}{2}}\omega^{-\frac{n}{2}}\|\boldsymbol\varphi\|_{L^\infty(\Omega)^n}\right)\notag\\
&\leq C_{n,\lambda,\mu}\varepsilon^\delta(1+\varepsilon^{\frac{n}{2}}(1+\varepsilon))\left(\omega^{-\delta}[\boldsymbol \varphi]_{\delta,\Omega}+\| \boldsymbol \varphi\|_{L^\infty(\Omega)^n}\right)\notag.
\end{align}
By taking a supremum over $\mathbf p\in \partial \Omega$, we obtain a contradiction with \eqref{eq:small} in Theorem \ref{thm:small}. 

Next, we are going to prove the case of $\Omega$ having multiple components. By contradiction, assume that $\mathbf u^\infty=\mathbf 0.$ For  $\mathbf u_c\in H^2_{loc}(\mathbb R^n)^n$ satisfying $\mathcal L\mathbf u_c+\omega^2\mathbf u_c=\chi_{\Omega_c}\boldsymbol \varphi$, it is easily seen that $\mathbf u^\infty =\mathbf 0$ if and only if $\mathbf u_c^\infty =(\mathbf u-\mathbf u_c)^\infty =\mathbf 0$.  This follows from the fact that  the complement $\mathbb R^n\setminus \overline \Omega$ of $\Omega$ is connected, implying that $\mathbf u_c\in H^2_{0}(\Omega_c)$ 
Therefore, applying a similar argument regarding the single component  to $\mathbf u_c$ and $\Omega_c$, one has 
$$ 
\frac{\sup_{\partial \Omega_c}|\boldsymbol\varphi|}{\omega^{-\delta}[\boldsymbol\varphi]_{\delta,\Omega_c}+\|\boldsymbol\varphi\|_{L^\infty(\Omega_c)}}\leq C\varepsilon^{\delta}(1+(1+\varepsilon)\varepsilon^{n/2}),
$$
which contradicts \eqref{eq:small}. 

The proof is complete.
\end{proof}

\section{Radiating property of elastic sources  with admissible $K$-curvature points}\label{sec:kpoint}

In this section, we explore the geometric characterization of radiating and non-radiating elastic sources at admissible $ K $-curvature points. These points are defined in Definition \ref{def:ad-hi-cur}, as introduced by \cite{BL2021}, and are fundamental to our subsequent analysis. A schematic illustration is provided in Figure \ref{fig:k}. Throughout this paper, we refer to admissible $ K $-curvature points with sufficiently large curvature as high-curvature points.

\begin{defn}\label{def:ad-hi-cur}
Let $\Omega$ be a bounded domain in $\mathbb R^n$ $(n=2,3)$ with a connected complement and $K,L,M,\varsigma$ be positive constants. A fixed point $\mathbf q\in \partial \Omega$ is termed an admissible $K$-curvature point with parameters $K,L,M,\varsigma$ if the following conditions are satisfied:
\begin{itemize}
\item[(a)]Under the translation invariance, the point $\mathbf q$ is the origin $\mathbf x=\mathbf 0$ and $\mathbf e_n=(0,\dots,0,1)$ is the interior unit normal vector to $\partial \Omega$ at $\mathbf 0;$
\item[(b)]Set $\rho=\sqrt{M}/K$ and $b=1/K$. There is a $C^3$-function $\gamma:\ B(\mathbf 0,\rho) \to \mathbb R_{+}\cup\{\mathbf 0\}$, where  $B(\mathbf 0,\rho)\subset \mathbb R^{n-1}$ is the disc of dimension $n-1$ with radius $\rho$ and center $\mathbf 0$ such that if
    \begin{equation}\label{eq:orho h}
    \Omega_{\rho,b}=B(\mathbf 0,\rho)\times (-b,b)\cap \Omega,
    \end{equation}
    then
    \begin{equation}\notag
    \Omega_{\rho,b}=\{ \mathbf x\in \mathbb R^n\vert \vert \mathbf x^{'}\vert <\rho,\ -b<x_n<b,\ \gamma(\mathbf x^{'})<x_n<b \},
    \end{equation}
    where $\mathbf x^{'}=(x_1,\dots,x_{n-1}),\ \mathbf x=(\mathbf x^{'},x_n)$.
\item[(c)]The function $\gamma$ satisfies
\begin{equation}\notag
\gamma(\mathbf x^{'})=K\vert \mathbf x^{'}\vert ^2+\mathcal O(\vert \mathbf x^{'}\vert^3),\ \mathbf x^{'}\in B(\mathbf 0,\rho).
\end{equation}
\item[(d)] The parameters $M\geq1$ and $0<K_{-}\leq K\leq K_{+}<\infty$ such that
\begin{align}
&K_{-}\vert \mathbf x^{'}\vert ^2 \leq \gamma(\mathbf x^{'})\leq K_{+}\vert \mathbf x^{'}\vert ^2,\quad\vert \mathbf x^{'}\vert <\rho,\notag\\
&M^{-1}\leq \frac{K_{\pm}}{K}\leq M,\quad K_{+}-K_{-}\leq LK^{1-\varsigma}\label{eq: K_+}.
\end{align}
\item[(e)] The intersection $D=\overline{\Omega_{\rho,b}}\cap (\mathbb R^{n-1}\times \{b\})$ is a Lipschitz domain.
\end{itemize}
\end{defn}

\begin{figure}[ht!]
\centering
\includegraphics[width=3cm]{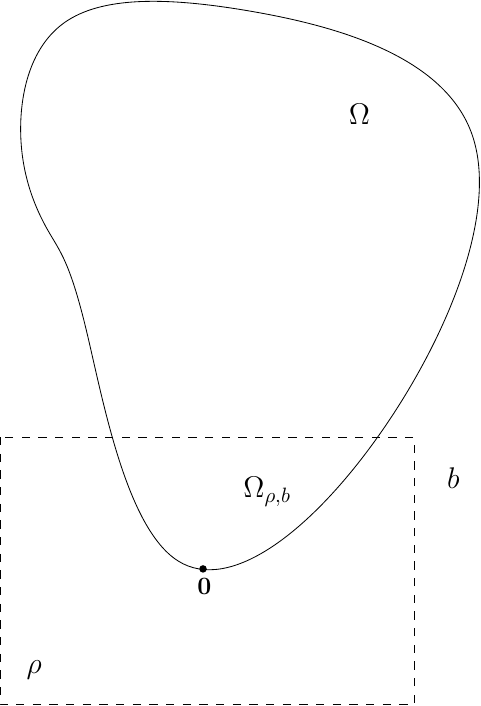}
\caption{The schematic illustration of the the boundary neighborhood $\Omega_{\rho,b}$ of an admissible $K$-curvature point.}
\label{fig:k}
\end{figure}

Theorem~\ref{thm:source rad} establishes a quantitative relationship between the intensity of a radiating elastic source at an admissible $K$-curvature point and the curvature, demonstrating that radiation necessarily occurs for scatterers containing high-curvature points.


\begin{thm}\label{thm:source rad}
Let $\Omega\subset\mathbb R^n$ $(n=2,3)$ be a bounded domain with diameter at most $D$ and  with  a connected complement. Suppose that $\mathbf q\in \partial \Omega$ is an admissible $K$-curvature point with parameters $K,L,M,\varsigma$ and $K$ is sufficiently large ($K\geq e$). Consider the source scattering problem \eqref{eq:lame} with the source term  $\mathbf f=\chi_\Omega \boldsymbol\varphi,\ \boldsymbol\varphi \in C^\alpha (\overline{\Omega})^n\cap H^1(\Omega)^n$, where $\alpha \in (0,1)$ for $n=2$ and $\alpha \in (1/3, 1)$ for $n=3$. Then for any   frequency $\omega\in \mathbb R_{+}$, if there holds that
\begin{equation}\label{eq:f1}
\frac{\vert \boldsymbol \varphi(\mathbf q)\vert}{\max(1,\|\boldsymbol{\varphi}\|_{C^\alpha(\overline{\Omega})^n},\|\boldsymbol{\varphi}\|_{H^1(\Omega)^n})} \geq \mathcal C(\ln K)^{\frac{n+1}{2}}K^{-\frac{1}{2}\min(\alpha,\varsigma)} \ \mbox{for}\ n=2
\end{equation}
or 
\begin{align}\label{eq:f2}
\frac{\vert \boldsymbol \varphi(\mathbf q)\vert}{\max(1,\|\boldsymbol{\varphi}\|_{C^\alpha(\overline{\Omega})^n},\|\boldsymbol{\varphi}\|_{H^1(\Omega)^n})}  \geq \mathcal C(\ln K)^{\frac{n+1}{2}}K^{-\frac{1}{2}\min(\alpha,\varsigma)+\frac{1}{6}}\  \mbox{for}\ n=3,
\end{align}
where $\min(\alpha,\varsigma)\in(1/3,1) \  \mbox{for}\ n=3 $,    $\mathcal C=\mathcal C(n,D,M,L,\alpha,\lambda,\mu,\varsigma,\omega)\in \mathbb R_{+}$ is  a  constant not depending on $K$. 
Then $(\Omega;\mathbf f)$ cannot be non-radiating.  
\end{thm}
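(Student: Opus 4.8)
The plan is to argue by contradiction, assuming that $(\Omega;\mathbf f)$ is non-radiating, and then to derive a quantitative estimate on $|\boldsymbol\varphi(\mathbf q)|$ at the admissible $K$-curvature point that contradicts \eqref{eq:f1}--\eqref{eq:f2}. Under the non-radiating assumption, Rellich's lemma gives $\mathbf u=\mathbf 0$ in $\mathbb R^n\setminus\overline\Omega$, so the displacement field is compactly supported and satisfies $\mathcal L\mathbf u+\omega^2\mathbf u=\chi_\Omega\boldsymbol\varphi$ with vanishing Cauchy data ($\mathbf u$ and $T_\nu\mathbf u$) on $\partial\Omega$. The heart of the argument is a microlocal asymptotic analysis near $\mathbf q$. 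First I would invoke the promised global regularity result (Lemma~\ref{lem:rise reg}) to upgrade the smoothness of $\mathbf u$ near $\mathbf q$, and then use the global $L^2$-type estimate of Proposition~\ref{prop:ubeta}, which crucially forces the Jacobian of the non-radiating displacement $\mathbf u$ to vanish at the admissible $K$-curvature point. This vanishing is what allows the local Taylor expansion of $\mathbf u$ near $\mathbf q$ to be controlled sharply in terms of the curvature.

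The main engine is the CGO machinery. I would take the CGO solution $\mathbf u_0$ to the homogeneous Lam\'e system from Lemma~\ref{lem:cgo}, depending on a large complex parameter $\tau$, which decays exponentially away from the tangent plane at $\mathbf q$ and concentrates its mass in the curved boundary layer $\Omega_{\rho,b}$. Testing the equation $\mathcal L\mathbf u+\omega^2\mathbf u=\chi_\Omega\boldsymbol\varphi$ against this CGO solution and integrating by parts (using the vanishing Cauchy data of $\mathbf u$ on $\partial\Omega$ to kill all boundary contributions) yields the integral identity of Lemma~\ref{lem:integral cur}, of the schematic form
\begin{equation}\notag
\int_{\Omega}\boldsymbol\varphi\cdot\mathbf u_0\,\mathrm d\mathbf x = \text{(lower-order remainder)}.
\end{equation}
On the left-hand side, I would split $\boldsymbol\varphi=\boldsymbol\varphi(\mathbf q)+(\boldsymbol\varphi-\boldsymbol\varphi(\mathbf q))$; the constant piece produces a main term whose asymptotic size in $\tau$ is governed precisely by the curvature $K$ through the geometry of $\gamma(\mathbf x')=K|\mathbf x'|^2+\mathcal O(|\mathbf x'|^3)$ encoded in Definition~\ref{def:ad-hi-cur}. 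The H\"older remainder $\boldsymbol\varphi-\boldsymbol\varphi(\mathbf q)$ contributes a term controlled by $\|\boldsymbol\varphi\|_{C^\alpha}$ and a negative power of $\tau$, while the right-hand remainder is controlled via Proposition~\ref{prop:ubeta} and $\|\boldsymbol\varphi\|_{H^1}$.

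The decisive step is the calibration of $\tau$ as a function of $K$. After extracting the leading term $\sim|\boldsymbol\varphi(\mathbf q)|\,\tau^{-a}$ (for appropriate exponent $a$ dictated by the Laplace-type integral over the paraboloid) and bounding the competing error terms by expressions such as $K^{-\min(\alpha,\varsigma)}\tau^{b}$ and logarithmic corrections, I would choose $\tau$ to balance these contributions optimally. This balancing is what yields the threshold $(\ln K)^{(n+1)/2}K^{-\frac12\min(\alpha,\varsigma)}$ in two dimensions and the extra factor $K^{1/6}$ in three dimensions (reflecting the dimensional dependence of the CGO and the Sobolev/H\"older ranges $\alpha\in(0,1)$ versus $\alpha\in(1/3,1)$). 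If $|\boldsymbol\varphi(\mathbf q)|$ were to satisfy \eqref{eq:f1} or \eqref{eq:f2}, the main term would dominate all error terms, so the integral identity could not hold, giving the contradiction. I expect the hardest part to be the sharp asymptotic evaluation of the CGO integral over the curved layer $\Omega_{\rho,b}$ together with the rigorous control of the $\mathcal O(|\mathbf x'|^3)$ perturbation of $\gamma$ and the error generated by the vanishing-Jacobian expansion of $\mathbf u$; tracking all $\tau$- and $K$-dependencies simultaneously, especially in the $n=3$ case where the admissible range of $\alpha$ shrinks, is the delicate technical core where the precise exponents are won or lost.
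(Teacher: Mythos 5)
Your proposal follows essentially the same route as the paper: contradiction via Rellich's lemma, the $C^{1,\beta}$ regularity upgrade (Lemma~\ref{lem:rise reg}) and vanishing Jacobian at the $K$-curvature point (Proposition~\ref{prop:ubeta}), the CGO-based integral identity of Lemma~\ref{lem:integral cur} with the splitting $\boldsymbol\varphi=\boldsymbol\varphi(\mathbf q)+(\boldsymbol\varphi-\boldsymbol\varphi(\mathbf q))$, and the calibration $\tau\sim K\ln K^{\zeta}$ to balance the paraboloid Laplace integral against the error terms. The only slight imprecision is that the vanishing Cauchy data kill the boundary terms only on $\partial\Omega$ itself, while the contribution on $\partial\Omega_{\rho,b}\setminus\partial\Omega$ survives as $\mathcal I_4$ and must be estimated (as the paper does in Proposition~\ref{prop:Tnu}) — but you correctly indicate that this remainder is controlled via Proposition~\ref{prop:ubeta} and the $H^1$-norm of $\boldsymbol\varphi$.
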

\begin{rem}
 The condition $\min(\alpha,\varsigma) \in (1/3,1)$ for $n=3$ ensures the curvature exponent in \eqref{eq:f2} is negative, thereby guaranteeing a decay property  of the right-hand side when  $K$ is increasing.
\end{rem}

To prove Theorem \ref{thm:source rad}, we employ a contradiction argument that requires analysis of the geometric properties of non-radiating sources, as established in Corollary \ref{cor:non-rad}. In the subsequent analysis, we focus on key results regarding non-radiating sources.


\begin{cor}\label{cor:non-rad} 
Let $n,D,L,M,\mathcal G,\alpha,\lambda,\mu,\varsigma$ and $\omega$ be a-priori positive constants.  
 Let $\mathbf q\in \partial \Omega$ be an admissible $K$-curvature point as defined in Definition \ref{def:ad-hi-cur} and the source term   $\mathbf f=\chi_{\Omega}\boldsymbol \varphi$ with $ \boldsymbol \varphi \in C^\alpha(\overline{\Omega})^n\cap H^1(\Omega)^n$, where $\alpha \in (0,1)$ for $n=2$ and $\alpha \in (1/3,1)$ for $n=3$. Suppose $\max(\|\boldsymbol{\varphi}\|_{C^{\alpha}(\overline{\Omega})},\boldsymbol{\varphi}\|_{H^1( \Omega)})\leq \mathcal G$. If the source is non-radiating, then there holds that  
 \begin{equation}\notag 
\vert \boldsymbol \varphi(\mathbf q)\vert \leq \mathcal R(\ln K)^{\frac{n+1}{2}}K^{-\frac{1}{2}\min(\alpha,\varsigma)}\quad \mbox{for}\quad n=2,
\end{equation}
or
\begin{equation}\notag
\vert \boldsymbol \varphi(\mathbf q)\vert \leq \mathcal R(\ln K)^{\frac{n+1}{2}}K^{-\frac{1}{2}\min(\alpha,\varsigma)+\frac{1}{6}},\quad  \min(\alpha,\varsigma)\in(1/3,1)\quad \mbox{for}\quad n=3,
\end{equation}
where $\mathcal R=\mathcal R(n,D,L,M,\mathcal G,\alpha,\lambda,\mu,\varsigma,\omega)$ is a positive constant depending on a-priori constants.
\end{cor}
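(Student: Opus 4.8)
The plan is to exploit the non-radiating hypothesis together with a concentrating family of complex geometric optics (CGO) solutions localized at the curvature point $\mathbf q$. Since the source is non-radiating, Rellich's lemma gives $\mathbf u=\mathbf 0$ in $\mathbb R^n\setminus\overline{\Omega}$, so that both $\mathbf u$ and its traction $T_\nu\mathbf u$ vanish on $\partial\Omega$. Placing $\mathbf q$ at the origin as in Definition~\ref{def:ad-hi-cur} and working in the boundary neighborhood $\Omega_{\rho,b}$ with $\rho=\sqrt{M}/K$ and $b=1/K$, I would first record the two global ingredients attached to a non-radiating displacement field: the elevated regularity furnished by Lemma~\ref{lem:rise reg} and the global $L^2$ bound of Proposition~\ref{prop:ubeta}, the latter also yielding the vanishing of the Jacobian of $\mathbf u$ at $\mathbf q$. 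Together these control the Cauchy-type data of $\mathbf u$ on the flat face $D=\overline{\Omega_{\rho,b}}\cap(\mathbb R^{n-1}\times\{b\})$, which is the only piece of $\partial\Omega_{\rho,b}$ not carried by the curved graph where $\mathbf u$ already vanishes.

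Next I would insert the CGO solution $\mathbf v_\tau$ of Lemma~\ref{lem:cgo}, a solution of the homogeneous Lamé system $\mathcal L\mathbf v_\tau+\omega^2\mathbf v_\tau=\mathbf 0$ carrying a large real parameter $\tau$ and decaying away from $\mathbf q$, into the integral identity of Lemma~\ref{lem:integral cur}. Betti's (Green's) formula over $\Omega_{\rho,b}$, combined with the vanishing Cauchy data of $\mathbf u$ on the curved portion of $\partial\Omega_{\rho,b}$, reduces the pairing $\int_{\Omega_{\rho,b}}\mathbf f\cdot\mathbf v_\tau\,\mathrm d\mathbf x$ to contributions supported on $D$ only. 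Decomposing $\boldsymbol\varphi=\boldsymbol\varphi(\mathbf q)+(\boldsymbol\varphi-\boldsymbol\varphi(\mathbf q))$ then isolates the principal term $\boldsymbol\varphi(\mathbf q)\cdot\int_{\Omega_{\rho,b}}\mathbf v_\tau\,\mathrm d\mathbf x$, whose modulus I would bound below using the explicit leading-order asymptotics of $\int\mathbf v_\tau$ over the approximately parabolic region $\{\gamma(\mathbf x')<x_n<b\}$, where $\gamma(\mathbf x')\approx K|\mathbf x'|^2$ by admissibility.

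The remaining terms split into two families. First, the Hölder remainder $\int_{\Omega_{\rho,b}}(\boldsymbol\varphi-\boldsymbol\varphi(\mathbf q))\cdot\mathbf v_\tau\,\mathrm d\mathbf x$ is dominated by $[\boldsymbol\varphi]_{C^\alpha}\int_{\Omega_{\rho,b}}|\mathbf x|^\alpha|\mathbf v_\tau|\,\mathrm d\mathbf x$, which contributes a negative power of $\tau$ weighted by the curvature scale. Second, the $D$-face boundary terms inherit the exponential decay $e^{-c\tau b}=e^{-c\tau/K}$ of $\mathbf v_\tau$ at distance $b$ from $\mathbf q$, and are controlled through Lemma~\ref{lem:rise reg} and Proposition~\ref{prop:ubeta}. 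Dividing by $|\int_{\Omega_{\rho,b}}\mathbf v_\tau|$ produces an upper bound for $|\boldsymbol\varphi(\mathbf q)|$ of the schematic shape $\mathcal R\,(\tau^{-\min(\alpha,\varsigma)}+e^{-c\tau/K}\tau^{\#})$; choosing $\tau\sim K\ln K$ balances the algebraic and exponential parts and yields the stated estimates, the factor $(\ln K)^{(n+1)/2}$ emerging from this optimization and the additional $K^{1/6}$ in three dimensions reflecting the weaker embedding $H^2(\mathbb R^3)\hookrightarrow C^{1/2}(\mathbb R^3)$, which also forces the restriction $\alpha\in(1/3,1)$.

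I expect the main obstacle to be the sharp asymptotic evaluation of $\int_{\Omega_{\rho,b}}\mathbf v_\tau$ and of the Hölder remainder over the thin, highly curved slab $\Omega_{\rho,b}$: both require tracking the interplay among the CGO phase, the parabolic profile $K|\mathbf x'|^2$, and the commensurate scales $\rho,b\sim 1/K$, and it is precisely the curvature-dependent concentration of $\mathbf v_\tau$ that converts the admissibility parameters $K,M,L,\varsigma$ into the explicit exponents $-\tfrac12\min(\alpha,\varsigma)$. Carrying this out in the genuine system setting—where the Helmholtz decomposition couples the pressure and shear components of $\mathbf v_\tau$ with distinct wave numbers $\kappa_p,\kappa_s$—rather than for a scalar equation is the delicate technical heart of the argument, and is where the constant $\mathcal R=\mathcal R(n,D,L,M,\mathcal G,\alpha,\lambda,\mu,\varsigma,\omega)$ acquires its dependence on the a-priori data.
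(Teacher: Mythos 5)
Your proposal is correct and follows essentially the same route as the paper: Rellich's lemma, the regularity lift of Lemma~\ref{lem:rise reg} and the vanishing Jacobian of Proposition~\ref{prop:ubeta}, the CGO solution of Lemma~\ref{lem:cgo} inserted into the Betti-type identity of Lemma~\ref{lem:integral cur} with the H\"older splitting of $\boldsymbol\varphi$, the boundary-term estimate on the face $x_n=b$, and the choice $\tau\sim K\ln K$ to balance the algebraic and exponential contributions. The only detail you leave implicit is the passage from $|\boldsymbol\varphi(\mathbf q)\cdot\eta|$ back to $|\boldsymbol\varphi(\mathbf q)|$, which the paper handles by treating the real and imaginary parts of $\boldsymbol\varphi$ separately.
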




Before proving Corollary \ref{cor:non-rad}, we present some auxiliary lemmas that are essential for the proofs.
Lemma \ref{lem:cgo} provides an expression of the complex geometric optics (CGO) solution $\mathbf{u}_0$, as introduced in \cite{P1998}.
\begin{lem}\label{lem:cgo}
Let $\mathbf x\in \mathcal K_{-}$ where $\mathcal K_{-}=\{\mathbf x\in \mathbb R^n| x_n>K_{-}|\mathbf x^{'}|\}$, and given $\mathbf d,\mathbf d^\perp\in \mathbb S^{n-1}$ such that $\mathbf d\cdot \mathbf d^\perp=0$ and $\mathbf d\cdot \hat{\mathbf x}\in(-1,0)$ with $\hat{\mathbf x}=\frac{\mathbf x}{\vert \mathbf x\vert}$. Denote the Complex Geometric Optics (CGO) solution as follows:
\begin{equation}\label{eq:cgo}
\mathbf u_0=\eta e^{\xi\cdot \mathbf x},
\end{equation}
where
\begin{equation}\label{eq:def eta}
\mathbf \xi=\tau\mathbf d+\mathrm i\sqrt{\kappa_s^2+\tau^2}\mathbf d^\perp,\quad \eta=-\mathrm i\sqrt{1+\frac{\kappa_s^2}{\tau^2}}\mathbf d+\mathbf d^\perp,
\end{equation}
$\kappa_s$ is defined in \eqref{eq:wave number} and $\kappa_s<\tau.$
Then $\mathbf u_0$ satisfies
\begin{equation}\notag
\mathcal L \mathbf u_0+\omega^2 \mathbf u_0=\mathbf 0.
\end{equation}
\end{lem}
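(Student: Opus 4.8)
The plan is to verify the identity by direct substitution, since $\mathbf u_0 = \eta e^{\xi\cdot\mathbf x}$ is an exponential with constant (complex) vectors $\eta$ and $\xi$. Writing the Lam\'e operator out as $\mathcal L \mathbf u_0 + \omega^2 \mathbf u_0 = \mu \Delta \mathbf u_0 + (\lambda+\mu)\nabla(\nabla\cdot\mathbf u_0) + \omega^2 \mathbf u_0$, the task reduces to computing the two scalar factors $\xi\cdot\xi$ and $\eta\cdot\xi$ that appear when the Laplacian and the divergence act on the exponential. There is no genuine analytic difficulty here; the content of the lemma is purely an algebraic verification, and the only point requiring care is that the two identities below hinge on the precise square-root coefficients in \eqref{eq:def eta}.

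First I would record the elementary differentiation rules $\Delta(\eta e^{\xi\cdot\mathbf x}) = (\xi\cdot\xi)\,\eta e^{\xi\cdot\mathbf x}$ and $\nabla\cdot(\eta e^{\xi\cdot\mathbf x}) = (\eta\cdot\xi)\, e^{\xi\cdot\mathbf x}$, valid because $\eta$ and $\xi$ are constant. Using the orthonormality built into the hypotheses, namely $\mathbf d\cdot\mathbf d = \mathbf d^\perp\cdot\mathbf d^\perp = 1$ and $\mathbf d\cdot\mathbf d^\perp = 0$, and the form $\xi = \tau\mathbf d + \mathrm i\sqrt{\kappa_s^2+\tau^2}\,\mathbf d^\perp$, the cross term drops out and $\xi\cdot\xi = \tau^2 - (\kappa_s^2+\tau^2) = -\kappa_s^2$. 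Hence $\Delta\mathbf u_0 = -\kappa_s^2\,\mathbf u_0$.

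Next I would evaluate $\eta\cdot\xi$ in the same manner. Only the $\mathbf d\cdot\mathbf d$ and $\mathbf d^\perp\cdot\mathbf d^\perp$ pairings survive, yielding $\eta\cdot\xi = -\mathrm i\tau\sqrt{1+\kappa_s^2/\tau^2} + \mathrm i\sqrt{\kappa_s^2+\tau^2}$. Since $\tau\sqrt{1+\kappa_s^2/\tau^2} = \sqrt{\tau^2+\kappa_s^2}$, the two terms cancel and $\eta\cdot\xi = 0$; this cancellation is precisely the reason for the chosen scalar factor in $\eta$, and it shows $\nabla\cdot\mathbf u_0 = 0$, so that $\mathbf u_0$ is a pure (divergence-free) shear wave and $\nabla(\nabla\cdot\mathbf u_0) = \mathbf 0$.

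Finally I would assemble the pieces to obtain $\mathcal L\mathbf u_0 + \omega^2\mathbf u_0 = (-\mu\kappa_s^2 + \omega^2)\mathbf u_0$, and then invoke the definition $\kappa_s = \omega/\sqrt{\mu}$ from \eqref{eq:wave number}, which gives $\mu\kappa_s^2 = \omega^2$ and forces the right-hand side to vanish. I would close by remarking that the geometric conditions $\mathbf x\in\mathcal K_{-}$ and $\mathbf d\cdot\hat{\mathbf x}\in(-1,0)$ play no role in this identity; they control the sign of $\mathrm{Re}(\xi\cdot\mathbf x) = \tau\,\mathbf d\cdot\mathbf x$ and are reserved for the decay and asymptotic estimates exploited in the subsequent lemmas.
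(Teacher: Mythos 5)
Your verification is correct, and since the paper itself omits the proof of this lemma (it simply cites the construction from H\"ahner's habilitation thesis \cite{P1998}), your direct substitution — computing $\xi\cdot\xi=-\kappa_s^2$ and $\eta\cdot\xi=0$ with the bilinear product, so that $\mathbf u_0$ is a divergence-free solution of $\mu\Delta\mathbf u_0+\omega^2\mathbf u_0=\mathbf 0$ — is exactly the standard argument behind the cited result. Your closing remark that the conditions $\mathbf x\in\mathcal K_{-}$ and $\mathbf d\cdot\hat{\mathbf x}\in(-1,0)$ are irrelevant to the PDE identity and only govern the decay of $e^{\xi\cdot\mathbf x}$ used in the later estimates is also accurate.
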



Next, we shall present a crucial integral identity \eqref{eq:integral sum} in Lemma \ref{lem:integral cur}.
\begin{lem}\label{lem:integral cur}
Let $\Omega\subset \mathbb R^n$ $(n=2,3)$ be a bounded Lipschitz domain and $\Omega_{\rho,b}$ be given by \eqref{eq:orho h} with $\mathbf 0\in \overline {\Omega}_{\rho,b}\cap \partial \Omega$, where $\mathbf 0$ is  an admissible K-curvature point with parameters $K,L,M,\varsigma.$ Let  $\mathbf u_0$ be given by \eqref{eq:cgo}. Assume that $\mathbf u\in  H^2(\Omega_{\rho,b})^n\cap C(\overline{\Omega}_{\rho,b})^n$ and $\boldsymbol\varphi\in L^\infty(\Omega_{\rho,b})^n$ satisfy
\begin{equation}\notag
\mathcal L \mathbf u+\omega^2 \mathbf u=\boldsymbol\varphi\   \mbox{in}\  \Omega_{\rho,b}\quad \mbox{and}\quad 
 \mathbf u=\T_\nu \mathbf u=\mathbf 0\ \mbox{on}\  \overline{\Omega}_{\rho,b}\cap \partial \Omega.
\end{equation}
Then we have the following integral identity
\begin{equation}\label{eq:integral sum}
\boldsymbol\varphi(\mathbf 0)\cdot \eta \int_{x_n>K\vert \mathbf x^{'}\vert ^2}e^{\xi \cdot \mathbf x}\mathrm d\mathbf x=\Sigma_{j=1}^4\mathcal I_4,
\end{equation}
where
\begin{align}
\mathcal I_1&=\boldsymbol\varphi(\mathbf 0)\cdot \eta\int_{x_n>\max(b,K\vert \mathbf x^{'}\vert ^2)} e^{\xi \cdot \mathbf x}\mathrm d\mathbf x,\notag\\
 \mathcal I_2&=\boldsymbol\varphi(\mathbf 0)\cdot \eta  \left [\int_{K\vert \mathbf x^{'}\vert ^2<x_n<b} e^{\xi \cdot \mathbf x}\mathrm d\mathbf x-\int_{\Omega_{\rho,b}}e^{\xi \cdot \mathbf x}\mathrm d \mathbf  x\right ]\notag\\
\mathcal I_3&=-\int_{\Omega_{\rho,b}}\mathbf u_0\cdot (\boldsymbol\varphi(\mathbf x)-\boldsymbol\varphi(\mathbf 0))\mathrm d\mathbf x,\notag\\
 \mathcal I_4&=\int_{\partial \Omega_{\rho,b}\setminus \partial \Omega}\mathbf u_0 \cdot (T_{\nu}\mathbf u)-\mathbf u\cdot( T_{\nu}\mathbf u_0 )\mathrm d\sigma.\label{eq:I1-4}
\end{align}
\end{lem}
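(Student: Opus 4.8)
The plan is to derive the identity \eqref{eq:integral sum} from an integration-by-parts (Betti/Green-type) argument applied to the pair $\mathbf u$ and the CGO solution $\mathbf u_0$ over the boundary neighborhood $\Omega_{\rho,b}$, and then to carefully decompose the resulting integrals so that the dominant term reproduces the left-hand side of \eqref{eq:integral sum}. First I would invoke the second Betti identity for the Lam\'e operator on $\Omega_{\rho,b}$: since $\mathcal L\mathbf u+\omega^2\mathbf u=\boldsymbol\varphi$ while $\mathcal L\mathbf u_0+\omega^2\mathbf u_0=\mathbf 0$ (Lemma~\ref{lem:cgo}), multiplying the first equation by $\mathbf u_0$, the (zero-source) relation for $\mathbf u_0$ by $\mathbf u$, subtracting and integrating gives
\begin{equation}\notag
\int_{\Omega_{\rho,b}}\mathbf u_0\cdot\boldsymbol\varphi\,\mathrm d\mathbf x
=\int_{\partial\Omega_{\rho,b}}\bigl(\mathbf u_0\cdot T_\nu\mathbf u-\mathbf u\cdot T_\nu\mathbf u_0\bigr)\,\mathrm d\sigma.
\end{equation}
The boundary $\partial\Omega_{\rho,b}$ splits into the portion lying on $\partial\Omega$, where the Cauchy data hypotheses $\mathbf u=T_\nu\mathbf u=\mathbf 0$ make the integrand vanish, and the remaining portion $\partial\Omega_{\rho,b}\setminus\partial\Omega$, which is exactly $\mathcal I_4$. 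Thus the volume integral equals $\mathcal I_4$.

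Next I would isolate the leading constant term. Writing $\boldsymbol\varphi(\mathbf x)=\boldsymbol\varphi(\mathbf 0)+(\boldsymbol\varphi(\mathbf x)-\boldsymbol\varphi(\mathbf 0))$ and recalling $\mathbf u_0=\eta e^{\xi\cdot\mathbf x}$, the volume integral becomes
\begin{equation}\notag
\boldsymbol\varphi(\mathbf 0)\cdot\eta\int_{\Omega_{\rho,b}}e^{\xi\cdot\mathbf x}\,\mathrm d\mathbf x
+\int_{\Omega_{\rho,b}}\mathbf u_0\cdot(\boldsymbol\varphi(\mathbf x)-\boldsymbol\varphi(\mathbf 0))\,\mathrm d\mathbf x,
\end{equation}
so the second integral is precisely $-\mathcal I_3$ after moving it to the right-hand side. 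It then remains to convert the constant-coefficient integral over the actual domain $\Omega_{\rho,b}$ into the model integral over the idealized paraboloid region $\{x_n>K|\mathbf x'|^2\}$ that appears on the left of \eqref{eq:integral sum}. Here I would use condition (c) of Definition~\ref{def:ad-hi-cur}, namely $\gamma(\mathbf x')=K|\mathbf x'|^2+\mathcal O(|\mathbf x'|^3)$, which says the true lower boundary $\{x_n=\gamma(\mathbf x')\}$ agrees with the model paraboloid $\{x_n=K|\mathbf x'|^2\}$ to leading order. The algebraic bookkeeping is the following telescoping:
\begin{equation}\notag
\int_{x_n>K|\mathbf x'|^2}\!\!=\int_{x_n>\max(b,K|\mathbf x'|^2)}\!\!+\Bigl(\int_{K|\mathbf x'|^2<x_n<b}-\int_{\Omega_{\rho,b}}\Bigr)+\int_{\Omega_{\rho,b}},
\end{equation}
each weighted by $\boldsymbol\varphi(\mathbf 0)\cdot\eta\,e^{\xi\cdot\mathbf x}$; the first two grouped pieces are exactly $\mathcal I_1$ and $\mathcal I_2$, and the last piece is the constant term produced in the previous step. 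Combining these identifications and rearranging yields \eqref{eq:integral sum} (with the summation index understood to run $j=1,\dots,4$).

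The main obstacle I anticipate is justifying the integration by parts rigorously despite the low regularity and the geometry of $\Omega_{\rho,b}$. Specifically, the Betti identity requires enough smoothness to make sense of the traction $T_\nu\mathbf u$ on $\partial\Omega_{\rho,b}$ and of the boundary terms; the hypothesis $\mathbf u\in H^2(\Omega_{\rho,b})^n\cap C(\overline\Omega_{\rho,b})^n$ together with $\mathbf u_0$ being smooth should suffice, but care is needed because the corner structure of $\Omega_{\rho,b}$ (the junction of $\partial\Omega$ with the artificial faces $|\mathbf x'|=\rho$ and $x_n=b$) means the outward normal is only defined piecewise and the divergence theorem must be applied on a Lipschitz domain. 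I would also need to confirm that the faces on which the Cauchy data vanish are precisely $\overline\Omega_{\rho,b}\cap\partial\Omega$, so that only $\partial\Omega_{\rho,b}\setminus\partial\Omega$ survives in $\mathcal I_4$; this relies on condition (e) guaranteeing $D$ is a Lipschitz domain. The purely computational rearrangement into $\mathcal I_1,\dots,\mathcal I_3$ is routine once the Betti identity and the set-theoretic decomposition of the integration region are in place.
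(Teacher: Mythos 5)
Your proof is correct and follows exactly the route the paper intends (the paper states the lemma without a detailed proof, asserting in the proof of Theorem \ref{thm:source rad} that the identity "holds immediately," with Betti's second formula for the Lam\'e operator as the underlying tool): Betti's identity on $\Omega_{\rho,b}$ kills the boundary terms on $\overline{\Omega}_{\rho,b}\cap\partial\Omega$ and leaves $\mathcal I_4$, the freezing of $\boldsymbol\varphi$ at $\mathbf 0$ produces $\mathcal I_3$, and the set-theoretic telescoping between $\{x_n>K|\mathbf x'|^2\}$, $\{K|\mathbf x'|^2<x_n<b\}$ and $\Omega_{\rho,b}$ yields $\mathcal I_1$ and $\mathcal I_2$. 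Your attention to the Lipschitz geometry of $\Omega_{\rho,b}$ and the $H^2$ regularity needed for the traction traces is exactly the right level of care; note only that the summation $\Sigma_{j=1}^4\mathcal I_4$ in \eqref{eq:integral sum} is a typo for $\Sigma_{j=1}^4\mathcal I_j$, as you implicitly assumed.
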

Drawing upon the results in \cite{BL2021}, we have the following inequalities related to the asymptotic analysis with respect to the curvature $K$ and the parameter $\tau$.
\begin{lem}\label{lem:estimate}
Let  $\xi\in \mathbb C^n$ be defined in  \eqref{eq:def eta} and $K,K_{\pm}$ be given by \eqref{eq: K_+}. Denote $\mathcal K=\{\mathbf x\in \mathbb R^n\vert K|\mathbf x^{'}|^2<x_n\}$, $\mathcal K_b=\{\mathbf x\in \mathbb R^{n}|K\vert \mathbf x^{'}\vert ^2<x_n<b\}$ for $b\in \mathbb R_{+}\cup\{\infty\}$ and $\mathcal K_{\pm}=\{\mathbf x\in \mathbb R^n| K_{\pm}|\mathbf x^{'}|^2<x_n<b\}$. Then it yields that
\begin{align}
&\int_{\mathcal K\setminus \mathcal K_b}e^{-\tau x_n}\mathrm d\mathbf x\leq C_n\frac{1+(\tau b)^{\frac{n-1}{2}}}{\tau^{\frac{n+1}{2}}K^{\frac{n-1}{2}}}e^{-\tau b},\notag\\
&\int_{\mathcal K_b}e^{-\tau x_n}\vert x\vert ^{\alpha}\mathrm d\mathbf x \leq C_{n,\alpha}(b+K^{-1})^{\frac{\alpha}{2}}b^{\frac{n+\alpha+1}{2}}K^{-\frac{n-1}{2}},\notag\\
&\int_{\mathcal K} e^{\xi \cdot \mathbf x}\mathrm d \mathbf x =-\frac{1}{\xi_n}\left( \frac{\pi}{-\xi_nK} \right) ^{\frac{n-1}{2}}\exp\{-\frac{\xi^{'}\cdot \xi^{'}}{4\xi_nK}\},\notag\\
&\int_{\mathcal K_{-}\setminus \mathcal K_{+}} e^{-\tau x_n}\mathrm d\mathbf x =\frac{\sigma(\mathbb S^{n-2})}{n-1}\left( \left ( \frac{1}{K_{-}} \right)^{\frac{n-1}{2}}-\left( \frac{1}{K_{+}} \right)^{\frac{n-1}{2}}\right)\frac{1}{\tau^{\frac{n+1}{2}}}\gamma(\tau b,\frac{n+1}{2})\notag,
\end{align}
where $\xi^{'}=(\xi_1,\dots,\xi_{n-1})$ and $\gamma(t,c)=\int_{0}^{t}e^{-x}x^{c-1}\mathrm d x,c\in \mathbb C$ is the lower incomplete gamma function. The positive constant $C_n$  depends only on the dimension $n$ and the positive constant $C_{n,\alpha}$ depends on $n$ and $\alpha$.
\end{lem}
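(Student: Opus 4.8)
The plan is to prove all four relations by a single device: Fubini's theorem, slicing each paraboloidal region by the horizontal hyperplanes $\{x_n = t\}$. For a fixed height $t$, the cross-section of a region of the form $\{K|\mathbf x'|^2 < x_n\}$ is the $(n-1)$-dimensional ball $\{|\mathbf x'| < \sqrt{t/K}\}$, whose volume is $\frac{\sigma(\mathbb S^{n-2})}{n-1}(t/K)^{(n-1)/2}$; the remaining one-dimensional integral in $t$ will, after the substitution $s = \tau t$, collapse to an upper or lower incomplete gamma function. I expect the first, second, and fourth estimates to be routine once this slicing is set up, so I would dispatch them in turn.

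For the first estimate, note that $\mathcal K \setminus \mathcal K_b = \{K|\mathbf x'|^2 < x_n,\ x_n \geq b\}$ up to a null set; slicing gives $\int_{\mathcal K \setminus \mathcal K_b} e^{-\tau x_n}\, d\mathbf x = \frac{\sigma(\mathbb S^{n-2})}{(n-1)K^{(n-1)/2}} \int_b^\infty e^{-\tau t} t^{(n-1)/2}\, dt$, and after $s = \tau t$ the $t$-integral equals $\tau^{-(n+1)/2}\Gamma(\frac{n+1}{2}, \tau b)$ with $\Gamma$ the upper incomplete gamma function. The claimed bound then follows from the elementary inequality $\Gamma(c, y) \leq C_c(1 + y^{c-1})e^{-y}$ (valid for all $y > 0$ by comparing with the endpoint behaviour at $0$ and $\infty$), applied with $c = \frac{n+1}{2}$. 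For the second estimate I would discard the exponential via $e^{-\tau x_n} \leq 1$ and bound the integrand pointwise: on $\mathcal K_b$ one has $|\mathbf x'|^2 < x_n/K < b/K$ and $x_n < b$, hence $|\mathbf x|^2 = |\mathbf x'|^2 + x_n^2 < b(b + K^{-1})$ and $|\mathbf x|^\alpha \leq b^{\alpha/2}(b + K^{-1})^{\alpha/2}$; multiplying by $\mathrm{vol}(\mathcal K_b) = \frac{\sigma(\mathbb S^{n-2})}{(n-1)K^{(n-1)/2}}\int_0^b t^{(n-1)/2}\, dt = C_n K^{-(n-1)/2} b^{(n+1)/2}$ yields the claim. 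The fourth estimate uses $K_- \leq K_+$, so $\mathcal K_+ \subset \mathcal K_-$ and the cross-section of $\mathcal K_- \setminus \mathcal K_+$ at height $t \in (0,b)$ is the annulus $\sqrt{t/K_+} \leq |\mathbf x'| < \sqrt{t/K_-}$ of volume $\frac{\sigma(\mathbb S^{n-2})}{n-1} t^{(n-1)/2}\bigl(K_-^{-(n-1)/2} - K_+^{-(n-1)/2}\bigr)$; integrating $e^{-\tau t}$ against this over $(0,b)$ and substituting $s = \tau t$ produces exactly $\tau^{-(n+1)/2}\gamma(\tau b, \frac{n+1}{2})$, giving the stated identity with equality.

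The third identity is the one genuine computation and the main point requiring care. Here I would integrate first in $x_n$: since the parameter choices in Lemma~\ref{lem:cgo} force $\mathrm{Re}(\xi_n) = \tau(\mathbf d \cdot \mathbf e_n) < 0$, the inner integral $\int_{K|\mathbf x'|^2}^\infty e^{\xi_n x_n}\, dx_n$ converges and equals $-\xi_n^{-1} e^{\xi_n K |\mathbf x'|^2}$, so that $\int_{\mathcal K} e^{\xi \cdot \mathbf x}\, d\mathbf x = -\xi_n^{-1}\int_{\mathbb R^{n-1}} e^{\xi_n K|\mathbf x'|^2 + \xi' \cdot \mathbf x'}\, d\mathbf x'$. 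Completing the square in the complex quadratic, $\xi_n K|\mathbf x'|^2 + \xi' \cdot \mathbf x' = \xi_n K\left|\mathbf x' + \frac{\xi'}{2\xi_n K}\right|^2 - \frac{\xi' \cdot \xi'}{4\xi_n K}$ (where $\xi' \cdot \xi'$ denotes the bilinear, not Hermitian, product), reducing the remaining integral to a shifted complex Gaussian. The delicate step is to evaluate $\int_{\mathbb R^{n-1}} e^{a|\mathbf y|^2}\, d\mathbf y = (\pi/(-a))^{(n-1)/2}$ for the complex coefficient $a = \xi_n K$ with $\mathrm{Re}(a) < 0$, together with the translation by the complex vector $\frac{\xi'}{2\xi_n K}$; both are justified by analytic continuation from the real-coefficient case and a coordinatewise contour-shift argument, the decay $\mathrm{Re}(a) < 0$ guaranteeing that the horizontal connecting segments at infinity contribute nothing. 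Assembling these pieces gives the stated closed form. Thus the only real obstacle is the bookkeeping of the complex Gaussian (the correct branch of the square root and the legitimacy of the complex shift); everything else is the uniform slicing computation plus the incomplete-gamma tail bound.
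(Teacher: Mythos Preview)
Your proof is correct. The paper does not actually prove this lemma: it simply states the estimates with the remark ``Drawing upon the results in \cite{BL2021}, we have the following inequalities,'' deferring entirely to that reference. Your horizontal-slicing argument via Fubini, reducing each integral to a one-variable incomplete gamma integral (and, for the third identity, to a complex Gaussian after integrating out $x_n$), is the standard route and is almost certainly what is carried out in \cite{BL2021}; in particular your treatment of the complex Gaussian --- completing the square with the bilinear form and justifying the complex contour shift via $\mathrm{Re}(\xi_n K)<0$ --- is exactly the care that step requires. So your write-up is a faithful, self-contained substitute for the citation.
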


For non-radiating sources $\mathbf{f}$, the following lemma establishes improved regularity from $H^2(\Omega)^n$ to $C^{1,\beta}(\overline{\Omega})^n$ for the displacement field $\mathbf{u}$. Moreover, it provides a global estimate bounding the H\"older norm $\|\mathbf{u}\|_{C^{1,\beta}(\overline{\Omega})^n}$ by the $H^1(\Omega)^n$-norm of $\mathbf{f}$. 
\begin{lem} \label{lem:rise reg}
Let $\Omega\subset \mathbb R^n,n\in\{2,3\}$ be a bounded Lipschitz domain of the diameter at most $D\in \mathbb R_{+}$ with a connected complement. Recall that  \( \kappa_s \) and  \( \kappa_p \) are defined by \eqref{eq:wave number}. Suppose that $\EM u\in H^2(\Omega)^n$  is a solution to \eqref{eq:lame} and $\EM f\in H^1(\Omega)^n$ is non-radiating,
then  one has $\EM u\in C^{1,\beta}(\overline{\Omega})^n,\beta\in (0,1]$ and  for $n=2$ and $\beta\in (0,1/2]$ for $n=3$. Moreover, there holds that
    \begin{align}
        \|\EM u\|_{C^{1,\beta}(\overline{\Omega})^n}\leq C\|\EM f\|_{H^1(\Omega)^n},\notag    \end{align}
    where $C=C_{n, D,\omega,\lambda,\mu,\beta}$ is a positive constant.
\end{lem}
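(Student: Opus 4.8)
The plan is to establish the improved regularity by applying elliptic (Lamé system) interior and boundary regularity estimates to the decomposed Helmholtz equations, leveraging the crucial fact that $\mathbf u$ is non-radiating. First I would use that since $\mathbf f$ is non-radiating, Rellich's lemma gives $\mathbf u=\mathbf 0$ in $\mathbb R^n\setminus\overline\Omega$; combined with $\mathbf u\in H^2_{loc}(\mathbb R^n)^n$ this yields the Cauchy data $\mathbf u=T_\nu\mathbf u=\mathbf 0$ on $\partial\Omega$, so effectively $\mathbf u$ solves the Lamé system on $\mathbb R^n$ with a compactly supported $H^1$ source. The key structural point is that, by the Helmholtz decomposition \eqref{eq:dehelm}--\eqref{eq:deu}, the components $\mathbf u_p,\mathbf u_s$ satisfy vector Helmholtz equations $\Delta\mathbf u_\iota+\kappa_\iota^2\mathbf u_\iota=\mathbf f_\iota$ with $\mathbf f_\iota\in L^2$; since $\mathbf f\in H^1(\Omega)^n$, one can bootstrap: writing $\Delta\mathbf u_\iota=\mathbf f_\iota-\kappa_\iota^2\mathbf u_\iota\in H^1$ on all of $\mathbb R^n$ (using the vanishing exterior data to avoid boundary terms), interior $H^3$ regularity follows, and then the Sobolev embedding $H^3(\mathbb R^n)^n\hookrightarrow C^{1,\beta}(\mathbb R^n)^n$ applies with $\beta\in(0,1]$ for $n=2$ and $\beta\in(0,1/2]$ for $n=3$.

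Concretely, I would rescale as in the proof of Lemma \ref{lem:uholder}, setting $\mathbf U_\iota(\mathbf y)=\mathbf u_\iota(\mathbf y/\kappa_\iota)$ and $\mathbf F_\iota(\mathbf y)=\mathbf f_\iota(\mathbf y/\kappa_\iota)$, so that $\Delta\mathbf U_\iota+\mathbf U_\iota=\mathbf F_\iota$ on $\mathbb R^n$ with $\mathbf U_\iota=\mathbf F_\iota=\mathbf 0$ outside the scaled domain. Taking Fourier transforms gives $(1+|\xi|^2)\widehat{\mathbf U}_\iota=2\widehat{\mathbf U}_\iota-\widehat{\mathbf F}_\iota$, whence each component lies in $H^3$ with
\begin{equation}\notag
\|\mathbf U_\iota\|_{H^3(\mathbb R^n)^n}\leq C\big(\|\mathbf U_\iota\|_{H^1(\mathbb R^n)^n}+\|\mathbf F_\iota\|_{H^1(\mathbb R^n)^n}\big),
\end{equation}
exactly as in Lemma \ref{lem:uholder} but with the $H^1$ weight $(1+|\cdot|^2)|\xi|$ absorbed into the source regularity. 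Applying the Sobolev embedding $H^3\hookrightarrow C^{1,\beta}$ in the stated ranges of $\beta$, then undoing the scaling (which introduces the $\kappa_\iota$-dependent, hence $\omega$-dependent, constants) and summing over $\iota\in\{p,s\}$ via $\mathbf u=\mathbf u_p+\mathbf u_s$, yields $\mathbf u\in C^{1,\beta}(\overline\Omega)^n$ together with the asserted bound. The $H^1(\Omega)^n$-norm of $\mathbf f$ controls $\|\mathbf F_\iota\|_{H^1}$ through the $L^2$-boundedness of the Helmholtz projections, and $\|\mathbf U_\iota\|_{H^1}$ is absorbed using the translation-invariant estimate of Lemma \ref{lem:ufL2} applied to derivatives, so all constants depend only on $n,D,\omega,\lambda,\mu,\beta$.

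The main obstacle I anticipate is justifying the full $H^1$-to-$H^3$ gain globally rather than merely interior. The Fourier argument works cleanly on $\mathbb R^n$ precisely because the vanishing Cauchy data $\mathbf u=T_\nu\mathbf u=\mathbf 0$ on $\partial\Omega$ let us regard $\mathbf u$ as an $H^2$ solution on all of $\mathbb R^n$ with no boundary contribution; the delicate point is verifying that extending $\mathbf u$ by zero preserves the $H^1$-regularity of the Laplacian, i.e.\ that $\Delta\mathbf u_\iota$ (computed in the distributional sense across $\partial\Omega$) indeed equals $\mathbf f_\iota-\kappa_\iota^2\mathbf u_\iota$ with no singular boundary layer. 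This is guaranteed by the matching of both $\mathbf u$ and its traction across $\partial\Omega$, but it must be argued carefully, since a naive zero extension of an $H^2$ function need not be $H^2$ unless the appropriate traces vanish. Once the vanishing of the Cauchy data is invoked to kill these interface distributions, the remainder is the routine Fourier/embedding estimate sketched above.
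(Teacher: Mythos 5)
Your overall strategy (Rellich's lemma to get vanishing Cauchy data, Helmholtz decomposition, rescaling, and a Fourier-multiplier bootstrap from the equation $\Delta\mathbf U_\iota+\mathbf U_\iota=\mathbf F_\iota$) mirrors the proof of Lemma \ref{lem:uholder}, but the step where you gain two derivatives over $H^1$ data has a genuine gap. The inequality $\|\mathbf U_\iota\|_{H^3(\mathbb R^n)^n}\leq C(\|\mathbf U_\iota\|_{H^1}+\|\mathbf F_\iota\|_{H^1})$ requires $(1+|\xi|^2)^{1/2}\widehat{\mathbf F}_\iota\in L^2$, i.e.\ the \emph{zero extension} of the source to lie in $H^1(\mathbb R^n)^n$. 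It does not: $\mathbf f=\chi_\Omega\boldsymbol\varphi$ with $\boldsymbol\varphi\not=\mathbf 0$ in a neighborhood of $\partial\Omega$ (this is assumed in Section \ref{sec:1.1}), so $\mathbf f$, and likewise $\mathbf f_p,\mathbf f_s$ and the rescaled $\mathbf F_\iota$, have a jump across $\partial\Omega$ and are only $L^2(\mathbb R^n)$, not $H^1(\mathbb R^n)$. The conclusion your estimate would give is in fact false: since $\mathbf u=\nabla\mathbf u=\mathbf 0$ on $\partial\Omega$, one has $\mathcal L\mathbf u\to\boldsymbol\varphi\not=\mathbf 0$ on $\partial\Omega$ from inside while $\mathcal L\mathbf u\equiv\mathbf 0$ outside, so the second derivatives of the zero extension jump across $\partial\Omega$ and $\tilde{\mathbf u}\in H^2(\mathbb R^n)\setminus H^3(\mathbb R^n)$. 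The vanishing of the Cauchy data, which you correctly identify as the delicate point, only removes the surface distributions from $\Delta\tilde{\mathbf u}_\iota$ (making the extension an $H^2$ solution of the PDE on $\mathbb R^n$); it cannot make the right-hand side $H^1$ across $\partial\Omega$, and hence cannot deliver global $H^3$ regularity.

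What the lemma actually asserts, and what must be proved, is the one-sided regularity $\mathbf u\in H^3(\Omega)^n$ up to $\partial\Omega$ from inside. The paper's proof therefore does not use a global Fourier multiplier: it extends $\mathbf u$ by zero to a ball $B_R$ chosen so that neither $\kappa_p^2$ nor $\kappa_s^2$ is a Dirichlet eigenvalue of $-\Delta$ in $B_R$, uses the well-posed Dirichlet problem for the two Helmholtz components to obtain $\|\tilde{\mathbf u}\|_{H^1(B_R)^n}\leq C\|\mathbf f\|_{L^2(\Omega)^n}$, and then invokes a local elliptic regularity theorem (\cite[Theorem 4.18]{MC}) together with the trivial Cauchy data on an auxiliary smooth surface to lift to $H^3$ on an intermediate ball, before applying the Sobolev embedding $H^3\hookrightarrow C^{1,\beta}$. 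Your final embedding step and the bookkeeping of the $\omega$-dependence are fine, but the core mechanism by which you reach $H^3$ would need to be replaced by a localized elliptic regularity argument of this type; as written, the proposal does not establish the lemma.
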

\begin{proof}
    First, denote $\tilde {\EM u}$ as the zero extension of $\EM u$ outside of $\Omega$ on a ball $B_{R}$ where $R>D$ is a radius such that $\Omega\Subset B_R$ and  neither \( \kappa_s^2 \) nor \( \kappa_p^2 \) is a Dirichlet eigenvalue for \(-\Delta\) in the ball \( B_R \) since only $0$. Moreover, there exist $D<R_1<R$ such that $\Omega\Subset B_{R_1}\Subset B_R$.  
        It is clear that $\tilde{\EM u}\in H^2(B_R)$ satisfies 
    \begin{align}\notag
        \mathcal L\tilde {\EM u} +\omega^2 \tilde {\EM u}=\EM f\ \mbox{in}\ \B_{R},\ \tilde {\EM u}=\EM 0\ \mbox{on}\ \partial \B_{R}.
    \end{align}
   Recall that $\EM u$ and $\EM f$ satisfy  Helmholtz decomposition \eqref{eq:dehelm} and \eqref{eq:deu}, we derive an analogous result for $ \tilde{  {\EM u}}$:
    \begin{align}\label{eq:helmde2}
        \Delta \tilde{  {\EM u}}_p+\kappa_p^2\tilde{  {\EM u}}_p=\EM f_p,\ 
        \Delta \tilde{  {\EM u}}_s+\kappa_s^2\tilde{  {\EM u}}_s=\EM f_s,
    \end{align}
    where $\tilde{\EM u}=\tilde{\EM u}_p+\tilde{\EM u}_s$ and $\EM f=\EM f_p+\EM f_s.$ 
    Since $\tilde {\EM u}=\EM 0$ in $B_R\setminus \overline{ \Omega}$, we have $\tilde{\EM u}_p=\tilde{\EM u}_s=\EM 0$   on $\partial B_R$.  
    Then by \cite[Corollary 8.7]{GilTru83} and \eqref{eq:helmde2}, we obtain that  
    \begin{align}
        \|\tilde{\EM u}_\iota\|_{H^1(B_{R})^n}\leq C \|\EM f_\iota\|_{L^2(\Omega)^n}\ (\iota=p,s).\notag
    \end{align}
    Moreover, by virtue of the fact that $\|\EM f_p\|_{L^2(\Omega)^n}+\|\EM f_s\|_{L^2(\Omega)^n}\leq 2\|\EM f\|_{L^2(\Omega)^n}$, we have
    \begin{align}
        \|\tilde{\EM u}\|_{H^1(B_{R})^n}&\leq   \|\tilde{\EM u}_p\|_{H^1(B_{R})^n}+\|\tilde{\EM u}_s\|_{H^1(B_{R})^n}
         \leq C \|\EM f\|_{L^2(\Omega)^n}.\label{eq:uH1f}
    \end{align}
     It is noted that  there exists a curve $\Gamma$ such that $\Gamma$ is $C^{2,1}$ and $\Gamma \cap \partial B_{R_1}\not=\emptyset$ but $\Gamma\cap B_{R_1}=\emptyset$.
    Since  $\tilde {\EM u }=T_\nu \tilde{\EM u}=\EM 0 \in H^{\frac{5}{2}}(\Gamma)^n$, then   by \cite[Theorem 4.18]{MC}, one has $\tilde {\EM u}\in H^3(B_{R_1})^n$ and 
    \begin{align}\label{eq:uH3}
        \|\tilde {\EM u}\|_{H^3(B_{R_1})^n}\leq C (\|\tilde {\EM u}\|_{H^1(\Omega)^n}+\|\EM f\|_{H^1(\Omega)^n}).
    \end{align}
   Therefore, combining   \eqref{eq:uH1f} and\eqref{eq:uH3}, we have
   \begin{align}\notag
       \| {\EM u}\|_{H^3(\Omega)^n}=\| {\tilde{\EM u}}\|_{H^3(B_{R_1})^n}\leq C\|\EM f\|_{H^1(\Omega)^n}.
   \end{align}
    Then by Sobolve embedding theorem, it follows that 
    $$\|{\EM u}\|_{ C^{1,\beta}(\overline{\Omega})^n}\leq C\|  {\EM u}\|_{H^3(\Omega)^n}\leq C\|\EM f\|_{H^1(\Omega)^n},$$
where $\beta\in (0,1]$ for $n=2$ and $\beta\in (0,1/2]$ for $n=3$ and $C=C_{n,R,R_1,D,\omega,\lambda,\mu,\beta}$ is a positive constant. 

The proof is complete.
\end{proof}



Subsequently, we establish a global point-wise estimate \eqref{eq:u00} for $|\EM u(\EM x)|$ in $\Omega$ when $\EM f$ is non-radiating and derive the vanishing property of the Jacobi matrix of $\EM u$ at the admissible $K$-curvature point. The following proposition is essential for estimating the integral term $\mathcal{I}_4$ in Lemma~\ref{lem:integral cur}. The proof of Proposition~\ref{prop:ubeta} exploits the   geometric characterization of the domain near admissible $K$-curvature boundary points.

\begin{prop}\label{prop:ubeta} Let $\Omega\subset \mathbb R^n$ $(n=2,3)$ be a bounded Lipschitz domain and $\Omega_{\rho,b}$ be given by \eqref{eq:orho h} with $\mathbf 0\in \overline {\Omega}_{\rho,b}\cap \partial \Omega$, where $\mathbf 0$ is  an admissible K-curvature point with parameters $K,L,M,\varsigma.$  Assume that $\EM u\in H^2(\Omega)^n$ is a solution to \eqref{eq:lame} and $\EM f$ is non-radiating, then there holds that 
      $$\nabla \mathbf u(\mathbf 0)=\mathbf 0$$
    and
    \begin{equation}\label{eq:u00}
        \vert\mathbf u (\mathbf x) \vert \leq \|\mathbf u\|_{C^{1,\beta}(\overline{\Omega})^n}|\mathbf{x}|^{1+\beta},\ \forall \mathbf x\in \overline{\Omega}, 
    \end{equation}
     where $\beta\in (0,1]$ for $n=2$ and $\beta\in (0,1/2]$ for $n=3$.
\end{prop}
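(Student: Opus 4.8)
The plan is to split the statement into two parts---the vanishing of the Jacobian $\nabla\mathbf u(\mathbf 0)$ and the pointwise decay estimate---and to feed the first into the second. First I would record the boundary behaviour of $\mathbf u$ that non-radiation forces. Since $\mathbf f$ is non-radiating, Rellich's lemma gives $\mathbf u=\mathbf 0$ in $\mathbb R^n\setminus\overline\Omega$; together with $\mathbf u\in H^2_{loc}(\mathbb R^n)^n$ this places $\mathbf u$ in $H^2_0(\Omega)^n$, so that both transmission conditions $\mathbf u=\mathbf 0$ and $T_\nu\mathbf u=\mathbf 0$ hold on $\partial\Omega$. By Lemma~\ref{lem:rise reg} the solution is in fact $\mathbf u\in C^{1,\beta}(\overline\Omega)^n$, so $\nabla\mathbf u$ and $T_\nu\mathbf u$ are continuous up to the boundary and the two conditions may be read pointwise at $\mathbf 0$.

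The heart of the argument is to show $\nabla\mathbf u(\mathbf 0)=\mathbf 0$, and this is where the geometric structure of Definition~\ref{def:ad-hi-cur} enters. By items (a) and (c) the interior normal at $\mathbf 0$ is $\mathbf e_n$ and $\partial\Omega$ is locally the graph $x_n=\gamma(\mathbf x')$ with $\nabla\gamma(\mathbf 0)=\mathbf 0$, so the tangent hyperplane at $\mathbf 0$ is $\{x_n=0\}$. Differentiating the Dirichlet identity $\mathbf u(\mathbf x',\gamma(\mathbf x'))=\mathbf 0$ in $x_j$ and evaluating at $\mathbf 0$ gives $\partial_j\mathbf u(\mathbf 0)=\mathbf 0$ for $j=1,\dots,n-1$, so only the normal derivative $\partial_n\mathbf u(\mathbf 0)$ can survive. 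I would then substitute these relations into $T_\nu\mathbf u(\mathbf 0)=\mathbf 0$ with outward normal $\nu=-\mathbf e_n$. In the explicit form of $T_\nu$ every term carrying a tangential derivative (and the curl term) collapses, and the components decouple into $-\mu\,\partial_n u_i(\mathbf 0)=0$ for $i<n$ and $-(\lambda+2\mu)\,\partial_n u_n(\mathbf 0)=0$. Since $\mu>0$ and $\lambda+2\mu>0$ (the latter being a consequence of the strong convexity condition \eqref{eq:strong conv} in both dimensions), this forces $\partial_n\mathbf u(\mathbf 0)=\mathbf 0$, and hence $\nabla\mathbf u(\mathbf 0)=\mathbf 0$. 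I expect this decoupling step---correctly handling the traction operator so that the tangential and curl contributions vanish and the two Lam\'e coefficients appear with the right sign---to be the main obstacle.

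For the pointwise bound I would use that $\mathbf u(\mathbf 0)=\mathbf 0$ and $\nabla\mathbf u(\mathbf 0)=\mathbf 0$: along the segment joining $\mathbf 0$ and $\mathbf x$ the fundamental theorem of calculus gives $\mathbf u(\mathbf x)=\int_0^1(\nabla\mathbf u(t\mathbf x)-\nabla\mathbf u(\mathbf 0))\,\mathbf x\,\mathrm dt$, and estimating the integrand by the H\"older seminorm yields $|\mathbf u(\mathbf x)|\le \tfrac{1}{1+\beta}[\nabla\mathbf u]_{\beta}|\mathbf x|^{1+\beta}\le \|\mathbf u\|_{C^{1,\beta}(\overline\Omega)^n}|\mathbf x|^{1+\beta}$. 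The point to verify is that the segment $[\mathbf 0,\mathbf x]$ remains in $\overline\Omega$; this is secured near $\mathbf 0$ by the star-shapedness of the boundary neighbourhood $\Omega_{\rho,b}=\{\gamma(\mathbf x')<x_n<b\}$ with respect to $\mathbf 0$, which is precisely the region in which the estimate is later used (for $\mathcal I_4$ in Lemma~\ref{lem:integral cur}). Indeed, since the Hessian of $\gamma$ is $2K\,\mathrm I+O(|\mathbf x'|)$ the function $\gamma$ is convex near its vertex, so $s\mapsto\gamma(s\mathbf x')$ is convex with value $0$ at $s=0$; convexity gives $\gamma(t\mathbf x')\le t\gamma(\mathbf x')<t\,x_n$ for $t\in(0,1]$, keeping $t\mathbf x$ above the graph and hence in $\overline{\Omega_{\rho,b}}$. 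This completes the proof.
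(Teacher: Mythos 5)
Your argument for $\nabla\mathbf u(\mathbf 0)=\mathbf 0$ is exactly the paper's: non-radiation plus Rellich gives $\mathbf u=T_\nu\mathbf u=\mathbf 0$ on $\partial\Omega$, the vanishing of $\mathbf u$ along the graph $x_n=\gamma(\mathbf x')$ kills the tangential derivatives at $\mathbf 0$, and the traction condition with $\nu=-\mathbf e_n$ then decouples into $-\mu\,\partial_nu_i(\mathbf 0)=0$ ($i<n$) and $-(\lambda+2\mu)\,\partial_nu_n(\mathbf 0)=0$, which is precisely the computation in \eqref{eq:tu=0} and \eqref{eq:tu3}. The one place you diverge is the remainder estimate: the paper simply invokes the first-order Taylor expansion with remainder controlled by $\|\mathbf u\|_{C^{1,\beta}(\overline\Omega)^n}$ for all $\mathbf x\in\overline\Omega$, whereas your segment argument, as written, only yields \eqref{eq:u00} on the star-shaped neighbourhood $\Omega_{\rho,b}$, which is weaker than the stated claim (though sufficient for the application to $\mathcal I_4$). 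This restriction is unnecessary: the proof of Lemma~\ref{lem:rise reg} shows the zero extension $\tilde{\mathbf u}$ lies in $H^3(B_{R_1})^n\hookrightarrow C^{1,\beta}(\overline{B_{R_1}})^n$ on a ball containing $\Omega$, and since $\nabla\mathbf u$ vanishes on $\partial\Omega$ the H\"older seminorm of $\nabla\tilde{\mathbf u}$ over the (convex) ball is controlled by $[\nabla\mathbf u]_{\beta,\Omega}$, so the fundamental-theorem-of-calculus argument applies along straight segments in the ball and gives \eqref{eq:u00} for every $\mathbf x\in\overline\Omega$ without any star-shapedness discussion.
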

\begin{proof}
    According to Lemma \ref{lem:rise reg}, we have $\mathbf u$ belongs to $C^{1,\beta}(\overline{\Omega} )^n$, implying that $\mathbf u$ satisfies the following expansion:
    \begin{equation}\notag
    \mathbf u(\mathbf x)=\mathbf u(\mathbf 0)+\nabla \mathbf u (\mathbf 0) \mathbf x+\boldsymbol \delta (\mathbf x),\ \vert \boldsymbol \delta (\mathbf x)\vert\leq \|\mathbf u\|_{C^{1,\beta}(\overline{\Omega})^n}|\mathbf x|^{1+\beta},\ \forall \mathbf x\in \overline{\Omega}.
    \end{equation}
    Since the source $\mathbf{f}$ is non-radiating, it yields that  $\mathbf{u} = T_\nu \mathbf{u} = \mathbf{0}$ on $\partial\Omega$. To prove \eqref{eq:u00}, it therefore suffices to show $\nabla\mathbf{u}(\mathbf{0}) = \mathbf{0}$, which follows immediately from the vanishing condition $\mathbf{u}(\mathbf{0}) = \mathbf{0}$.    \medskip
    
    \textbf{Case I:} $n=2$. We know that the boundary $\overline{\Omega}_{\rho,b}\cap \partial \Omega$ can be characterized by the curve $\mathbf r(x_1)=(x_1,\gamma(x_1))$, where the function $\gamma$ is given in Definition \ref{def:ad-hi-cur} and $\mathbf r$ is parameterized by $x_1$.  Let $s$ be a parameter characterizing the arc length of the curve $\mathbf r(x_1)$. Subsequently, the tangential derivative of $u_i\ (i=1,2)$ along the curve is represented by $\frac{\mathrm d u_i}{\mathrm d s}$.  By virtue of the condition $\mathbf u=\mathbf 0$ on $\partial \Omega$, we  have 
    $$0=\frac{\mathrm d u_i}{\mathrm d s}=\frac{\mathrm du_i}{ \mathrm dx_1}\frac{1}{|\mathbf r^{'}(x_1)|}=\nabla u_i\cdot  \mathbf t,$$
    where $ \mathbf r^{'}(x_1)=(1,\gamma^{'}(x_1))$ and  $\mathbf t$ is a tangent vector  at $\mathbf x\in \partial \Omega$.
  Hence, one has 
    \begin{equation}\label{eq:u01}
 \partial_1u_1(\mathbf 0)=\partial_1u_2(\mathbf 0)=0,
    \end{equation}
    since $\mathbf t=(1,0)^\top$ at $\mathbf x=\mathbf 0.$
    
    Furthermore, we have the expression of $T_{\nu}\mathbf u$ for $n=2$ as follows:
 \begin{equation}\notag
 \begin{aligned}
 T_{\nu}\mathbf u&=2\mu\frac{\partial \mathbf u}{\partial \nu}+\lambda \nu\nabla\cdot \mathbf u+\mu\nu^\perp(\partial_2u_1-\partial_1u_2)\\
         &=2\mu(\nu_1\partial_1+\nu_2\partial_2)\mathbf u+\lambda \nu(\partial_1u_1+\partial_2u_2)+\mu\nu^\perp(\partial_2u_1-\partial_1u_2).
 \end{aligned}
 \end{equation}
 Recalling that $T_\nu \mathbf u=\mathbf 0$ on $\partial \Omega$ and $\nu=(0,-1)$ and $\nu^\perp =(1,0)$ at $\mathbf x=\mathbf 0$, we then have the formula for $T_\nu \mathbf u$ at $\mathbf x=\mathbf 0$ as follows:
\begin{equation}\label{eq:tu=0}
T_\nu\mathbf u\big\vert_{\mathbf x=\mathbf 0}=-
\left(
\begin{array}{c}
\mu(\partial_2u_1(\mathbf 0)+\partial_1u_2(\mathbf 0))\\
\lambda\partial_1u_1(\mathbf 0)+(\lambda+2\mu)\partial_2u_2(\mathbf 0)
\end{array}
\right)=\mathbf 0.
\end{equation}
Combining \eqref{eq:u01} with \eqref{eq:tu=0}, it is clear that $\partial_2u_1(\mathbf 0)=\partial_2u_2(\mathbf 0)=0$. Therefore, $\nabla \mathbf u(\mathbf 0)=\mathbf 0$ for $n=2.$

\medskip

\textbf{Case II:} $n=3$.  Similarly, one has $\overline{\Omega}_{\rho,b}\cap\partial \Omega$ can be represented by the surface $\mathbf r(x_1,x_2)=(x_1,x_2,\gamma(x_1,x_2))$, where $\gamma$ is also given by Definition \ref{def:ad-hi-cur}. For a given point $\mathbf a\in \overline{\Omega}_{\rho,b}\cap\partial \Omega$, let $\boldsymbol{\varkappa}:[-1,1]\to \mathbf r(x_1,x_2)$ be a smooth curve denoted by $\varkappa(t)=(\varkappa_1(t),\varkappa_2(t),\varkappa_3(t))\in \mathbf r(x_1,x_2),t\in[-1,1]$  such that  $\boldsymbol{\varkappa}(  0)=\mathbf a$ and $\varkappa^{'}(  0)=\mathbf t$, where $\mathbf t$ is a tangent vector at $\mathbf a$. Then the tangential derivative of $u_i,i=1,2,3$ along $\mathbf t$ at $\mathbf a$ is given by
$$\nabla u_i\cdot \mathbf t=\frac{\mathrm d}{\mathrm d t}u_i \circ \varkappa(t)|_{t=0}.$$
By virtue of $\mathbf u(\mathbf 0)=\mathbf 0$, we then obtain 
$$\frac{\mathrm d}{\mathrm d t}u_i \circ \varkappa(t)|_{t=0}=\frac{\mathrm d}{\mathrm dt}u_i(\varkappa_1(t),\varkappa_2(t),\varkappa_3(t))|_{t=0}= 0.$$
Let $\mathbf t_1=(1,0,0)^{\top}$ and  $\mathbf t_2=(0,1,0)^{\top}$ be two tangent vectors at $\mathbf 0$, it yields that 
\begin{equation}\label{eq:u02}
    \partial_1 u_i(\mathbf 0)=\partial_2 u_i(\mathbf 0)=0\ (i=1,2,3).
\end{equation}

When $n=3$, one has the following formula for $T_\nu \mathbf u$:
\begin{equation}\notag
\begin{aligned}
T_\nu \mathbf u &=2\mu\frac{\partial \mathbf u}{\partial \nu}+\lambda \nu \nabla\cdot \mathbf u+\mu\nu\times (\nabla \times \mathbf u)\\
&=2\mu(\nu_1\partial_1+\nu_2\partial_2+\nu_3\partial_3)\mathbf u+\lambda \nu(\partial_1u_1+\partial_2u_2+\partial_3u_3)+\mu\nu\times (\nabla \times \mathbf u),
\end{aligned}
\end{equation}
where
\begin{equation}\notag
\nu\times (\nabla \times \mathbf u)=
\left(
\begin{array}{c}
\nu_2(\partial_1u_2-\partial_2u_1)-\nu_3(\partial_3u_1-\partial_1u_3)\\
\nu_3(\partial_2u_3-\partial_3u_2)-\nu_1(\partial_1u_2-\partial_2u_1)\\
\nu_1(\partial_3u_1-\partial_1u_3)-\nu_2(\partial_2u_3-\partial_3u_2)
\end{array}
\right).
\end{equation}
Recalling that $T_\nu \mathbf u=\mathbf 0$ on $\partial \Omega $ and $\nu=(0,0,-1)$ at $\mathbf 0$, then one has
\begin{equation}\label{eq:tu3}
T_\nu\mathbf u|_{\mathbf x=\mathbf 0}=-\left(\begin{array}{c}
\mu(\partial_1u_3(\mathbf 0)+\partial_3u_1(\mathbf 0))\\
\mu(\partial_2u_3(\mathbf 0)+\partial_3u_2(\mathbf 0))\\
\lambda(\partial_1u_1(\mathbf 0)+\partial _2u_2(\mathbf 0))+(\lambda+2\mu )\partial _3u_3(\mathbf 0)
\end{array}\right)=\mathbf 0.
\end{equation}
Combining \eqref{eq:u02} and \eqref{eq:tu3} yields that  $\partial_3u_i=0$ ($i=1,2,3$), implying that $\nabla \mathbf u(\mathbf 0)=\mathbf 0$.

The proof is complete.
\end{proof}

Combining Lemma~\ref{lem:rise reg} with Proposition~\ref{prop:ubeta}, we obtain the following estimate for the boundary integral term $\mathcal I_4$ in Lemma \ref{lem:integral cur}.
\begin{prop}\label{prop:Tnu}
Let $\mathcal I_4$ be given in Lemma \ref{lem:integral cur}, then there holds that 

\begin{equation}\label{eq:estTnu}
\vert \mathcal I_4\vert \leq Ce^{-\tau b}K^{-(\beta+\frac{n+1}{2})}(K+\tau )\|\EM u\|_{C^{1,\beta}(\overline{\Omega}_{\rho,b})^n},
\end{equation}
where $\beta\in (0,1]$ for $n=2$ and $\beta\in (0,1/2] $ for $n=3$, and $C$ is a positive constant independent of $K$ and the parameter $\tau$.
\end{prop}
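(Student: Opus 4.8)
The plan is to reduce the surface integral defining $\mathcal I_4$ in \eqref{eq:I1-4} to an integral over the small flat ``cap'' of $\Omega_{\rho,b}$, and then to play the exponential decay of the CGO solution $\mathbf u_0$ against the vanishing of $\mathbf u$ and $\nabla\mathbf u$ at the admissible $K$-curvature point. First I would pin down the geometry of the integration surface $\partial\Omega_{\rho,b}\setminus\partial\Omega$. By \eqref{eq:orho h}, the boundary of $\Omega_{\rho,b}$ splits into the graph part $\{x_n=\gamma(\mathbf x')\}\subset\partial\Omega$, the top cap $D\subset\{x_n=b\}$, and the lateral wall $\{|\mathbf x'|=\rho,\ \gamma(\mathbf x')<x_n<b\}$. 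Using $b=1/K$, $\rho=\sqrt M/K$ together with $K_-\ge K/M$ from \eqref{eq: K_+}, one gets $\gamma(\rho)\ge K_-\rho^2\ge b$, so the lateral wall is empty and $\partial\Omega_{\rho,b}\setminus\partial\Omega=D$. The same bounds show that $\gamma(\mathbf x')\le b$ forces $|\mathbf x'|\lesssim K^{-1}$, so that $D$ is a graph over an $(n-1)$-disc of radius $\mathcal O(K^{-1})$; hence $\mathcal H^{n-1}(D)\le C K^{-(n-1)}$ and $|\mathbf x|\le C K^{-1}$ for every $\mathbf x\in D$, with $C$ depending only on the admissibility data.

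Next I would estimate the two CGO factors on $D$. From \eqref{eq:cgo}--\eqref{eq:def eta}, $\mathrm{Re}(\xi\cdot\mathbf x)=-\tau x_n$ and, since $\tau>\kappa_s$, one has $|\eta|^2=2+\kappa_s^2/\tau^2\le 3$ and $|\xi|^2=2\tau^2+\kappa_s^2\le 3\tau^2$. Therefore $|\mathbf u_0(\mathbf x)|\le C e^{-\tau x_n}$ and $|\nabla\mathbf u_0(\mathbf x)|\le C\tau e^{-\tau x_n}$; as the traction operator $T_\nu$ is first order with coefficients depending only on $\lambda,\mu$, this yields $|T_\nu\mathbf u_0(\mathbf x)|\le C\tau e^{-\tau x_n}$. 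On $D$ we have $x_n=b$, so $|\mathbf u_0|\le C e^{-\tau b}$ and $|T_\nu\mathbf u_0|\le C\tau e^{-\tau b}$.

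For the factors involving $\mathbf u$ I would invoke Proposition~\ref{prop:ubeta}: since $\mathbf f$ is non-radiating, $\mathbf u(\mathbf 0)=\nabla\mathbf u(\mathbf 0)=\mathbf 0$, and $\mathbf u\in C^{1,\beta}(\overline\Omega_{\rho,b})^n$ by Lemma~\ref{lem:rise reg}. Using that $\mathbf u$ vanishes on the graph $\partial\Omega\cap\overline\Omega_{\rho,b}$ and integrating $\partial_n\mathbf u$ along the vertical segment inside $\Omega_{\rho,b}$ from the graph up to a point of $D$ (a segment of length $\le b=K^{-1}$ on which $|\nabla\mathbf u|\le\|\mathbf u\|_{C^{1,\beta}(\overline\Omega_{\rho,b})^n}\,\mathcal O(K^{-\beta})$, because $\nabla\mathbf u(\mathbf 0)=\mathbf 0$ and $\nabla\mathbf u$ is $\beta$-Hölder), I obtain $|\mathbf u(\mathbf x)|\le C K^{-(1+\beta)}\|\mathbf u\|_{C^{1,\beta}(\overline\Omega_{\rho,b})^n}$ on $D$. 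The same Hölder bound gives $|\nabla\mathbf u(\mathbf x)|=|\nabla\mathbf u(\mathbf x)-\nabla\mathbf u(\mathbf 0)|\le C K^{-\beta}\|\mathbf u\|_{C^{1,\beta}(\overline\Omega_{\rho,b})^n}$, hence $|T_\nu\mathbf u(\mathbf x)|\le C K^{-\beta}\|\mathbf u\|_{C^{1,\beta}(\overline\Omega_{\rho,b})^n}$. Combining the four bounds, the integrand of $\mathcal I_4$ satisfies, on $D$,
\[
|\mathbf u_0\cdot T_\nu\mathbf u|+|\mathbf u\cdot T_\nu\mathbf u_0|\le C e^{-\tau b}\|\mathbf u\|_{C^{1,\beta}(\overline\Omega_{\rho,b})^n}\bigl(K^{-\beta}+\tau K^{-(1+\beta)}\bigr).
\]
Multiplying by $\mathcal H^{n-1}(D)\le C K^{-(n-1)}$ then produces
\[
|\mathcal I_4|\le C e^{-\tau b}K^{-(n-1)}\bigl(K^{-\beta}+\tau K^{-(1+\beta)}\bigr)\|\mathbf u\|_{C^{1,\beta}(\overline\Omega_{\rho,b})^n}=C e^{-\tau b}K^{-(n+\beta)}(K+\tau)\|\mathbf u\|_{C^{1,\beta}(\overline\Omega_{\rho,b})^n},
\]
with $C$ independent of $K$ and $\tau$. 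Since $K\ge e$ and $n\ge 2$, one has $n\ge\frac{n+1}{2}$ and thus $K^{-(n+\beta)}\le K^{-(\beta+\frac{n+1}{2})}$, which gives exactly \eqref{eq:estTnu}.

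The main obstacle is the geometric reduction in the first step rather than the analytic estimates, which are routine once the tools of Lemma~\ref{lem:cgo} and Proposition~\ref{prop:ubeta} are in place. One must verify carefully that $\partial\Omega_{\rho,b}\setminus\partial\Omega$ is precisely the small cap $D$ (so the lateral wall contributes nothing) and extract the sharp scales $\mathcal H^{n-1}(D)\sim K^{-(n-1)}$ and $|\mathbf x|\sim K^{-1}$ on $D$ from the admissibility relations \eqref{eq: K_+} with $b=1/K$ and $\rho=\sqrt M/K$; this is where every power of $K$ is generated. A crude estimate of the cap (for instance using $|\mathbf x'|\le\rho$ without the defining constraint $\gamma(\mathbf x')\le b$, or overlooking the vertical-segment argument that forces $\mathbf u$ to vanish to order $1+\beta$ on $D$) would destroy the decisive decay in $K$ and the negative curvature exponent needed downstream in Corollary~\ref{cor:non-rad}.
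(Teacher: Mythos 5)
Your proposal is correct and follows essentially the same route as the paper's proof: bound $\mathbf u_0$ and $T_\nu\mathbf u_0$ by $Ce^{-\tau b}$ and $C\tau e^{-\tau b}$ on the flat cap at $x_n=b$, use Proposition~\ref{prop:ubeta} together with the $C^{1,\beta}$ regularity to gain $K^{-(1+\beta)}$ and $K^{-\beta}$ from the vanishing of $\mathbf u$ and $\nabla\mathbf u$ at the origin, and multiply by the surface measure $\mathcal O(K^{-(n-1)})$. The only differences are cosmetic: you treat $T_\nu$ abstractly as a first-order operator instead of writing out its components for $n=2,3$ as the paper does, and you make explicit the (correct) geometric fact that the lateral wall of $\Omega_{\rho,b}$ is empty, which the paper uses implicitly.
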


 \begin{proof}
  Without loss of generality, assume that the admissible $K$-curvature boundary point $\mathbf{q}$ is located at the origin $\mathbf{0}$, we will prove this proposition  for the cases of $n=2$ and $n=3$, respectively.

$\mathbf{Case\  I}:n=2 $. Let $\mathbf d=(0,-1)^{\top}$ and $\mathbf d^\perp=(1,0)^{\top}$, it follows from  \eqref{eq:def eta} that $  \eta=(1,\mathrm i\sqrt{1+\frac{\kappa_s^2}{\tau^2}})^\top$ and $\xi=(\mathrm i \sqrt{\tau^2+\kappa_s^2},-\tau)^\top$. 

 Similar to \eqref{eq:tu=0}, the following formulas hold for $T_\nu\mathbf u$ and $T_\nu\mathbf u_0$ at $x_2=b$:
\begin{equation}\label{eq:tuh}
T_\nu \mathbf u(x_1,b)=
\left(
\begin{array}{c}
\mu(\partial_2u_1(x_1,b)+\partial_1u_2(x_1,b))\\
\lambda\partial_1u_1(x_1,b)+(\lambda+2\mu)\partial_2u_2(x_1,b)
\end{array}
\right)
\end{equation}
and
\begin{equation}\label{eq:tvh}
T_{\nu}\mathbf u_0(x_1,b)=-2\mu\tau
\left(
\begin{array}{c}
2+\frac{\kappa_s^2}{\tau^2}\\
-2\mathrm i\sqrt{1+ \frac{\kappa_s^2}{\tau^2}}
\end{array}
\right)e^{\mathrm i\sqrt{\tau^2+\kappa_s^2}x_1-\tau b},
\end{equation}
since $\nu=(0,1)^{\top}$ when $x_2=b.$
Substituting \eqref{eq:tuh} and \eqref{eq:tvh} into \eqref{eq:Tnu}, one has
\begin{equation}\label{eq:In2}
\begin{aligned}
\mathcal I_4=e^{-\tau b}&\int_{\partial \Omega_{\rho,b}\setminus \partial \Omega} e^{\mathrm i\sqrt{\kappa_s^2+\tau^2}x_1}\bigg\lbrack
\mu (\partial_2u_1(x_1,b)+\partial_1u_2(x_1,b))\\
&+ \mathrm i\sqrt{1+\frac{\kappa_s^2}{\tau^2}}(\lambda\partial_1u_1(x_1,b)+(\lambda +2\mu)\partial_2u_2(x_1,b)) \bigg\rbrack\\
&+\mu\tau e^{\mathrm i\sqrt{\kappa_s^2+\tau^2}x_1}\left[\left(2 +\frac{\kappa_s^2}{\tau^2}\right)u_1(x_1,b)+2\mathrm i\sqrt{1+\frac{\kappa_s^2}{\tau^2}}u_2(x_1,b)
\right]\mathrm d\sigma.
\end{aligned}
\end{equation}

By virtue of $\mathbf u\in C^{1,\beta}(\overline{\Omega}_{\rho,b})^n$ with $\mathbf u=(u_1,u_2)^\top$ in  Lemma \ref{lem:rise reg}, we have $\partial_i u_j\in C^{\beta}(\overline{\Omega}_{\rho,b})^n$ for $i,j=1,2$. Combining \eqref{eq:tuh}-\eqref{eq:In2} with Proposition \ref{prop:ubeta}, and  taking $b=1/K$, we obtain the following estimates:
\begin{align}\notag
&\mu\tau\left\vert \left(2 +\frac{\kappa_s^2}{\tau^2}\right)u_1(x_1,b)+2\mathrm i\sqrt{1+\frac{\kappa_s^2}{\tau^2}}u_2(x_1,b)\right\vert \notag\\
&\quad \quad \quad \quad \quad\leq 3\mu \tau \left ( \|u_1\|_{C^{1,\beta}(\overline{\Omega}_{\rho,b})^n}+\|u_2\|_{C^{1,\beta}(\overline{\Omega}_{\rho,b})^n}\right)(x_1^2+b^2)^{\frac{1+\beta}{2}},\notag\\
&\left \vert \lambda \partial_1u_1(x_1,b)+ (\lambda +2\mu )\partial_2u_2(x_1,b)\right \vert \notag\\
&\quad \quad \quad \quad \quad \leq(\lambda+2\mu) (\|\partial_1u_1\|_ {C^{\beta}(\overline{\Omega}_{\rho,b})^n}+\|\partial_2u_2\|_{C^{\beta}(\overline{\Omega}_{\rho,b})^n})( x_1^2+b^2) ^{\frac{{\beta}}{2}},\notag\\
&\left \vert \mu (\partial_2u_1(x_1,b)+\partial_1u_2(x_1,b))\right \vert\notag\\
& \quad \quad \quad \quad \quad \leq \mu(\|\partial_2 u_1\|_{C^{\beta}(\overline{\Omega}_{\rho,b})^n}+\|\partial_1 u_2\|_{C^{\beta}(\overline{\Omega}_{\rho,b})^n})(x_1^2+b^2) ^{\frac{{\beta}}{2}}.\notag
\end{align}
Recall that $\frac{K}{K_{-}}\leq M$, then one has the following inequality
\begin{equation}
\begin{aligned}
\vert \mathcal I_4\vert
&\leq C e^{-\tau b}\|\EM u\|_{C^{1,\beta}(\overline{\Omega}_{\rho,b})^n}\left(\int_{\partial \Omega_{\rho,b}\setminus \partial \Omega} (x_1^2+b^2)^{\frac{\beta}{2}}\mathrm d\sigma+ \tau \int_{\partial \Omega_{\rho,b}\setminus \partial \Omega} (x_1^2+b^2)^{\frac{1+\beta}{2}}\mathrm d\sigma\right )\notag\\
&\leq C \sigma(\mathbb S^{n-1}) e^{-\tau b} \|\EM u\|_{C^{1,\beta}(\overline{\Omega}_{\rho,b})^n}\left (\sqrt {\frac{b}{K_{-}}}\right)^{n-1}(K^{-\beta}+\tau K^{-(1+\beta)})\\
&\leq Ce^{-\tau b}K^{-(\beta+\frac{n+1}{2})}(K+\tau )\|\EM u\|_{C^{1,\beta}(\overline{\Omega}_{\rho,b})^n},
\end{aligned}
\end{equation}
where $C=C_{n,M,\lambda, \mu,\beta}$ is a positive constant independent of $K$ and $\tau$.

 $\mathbf {Case\ II}:n=3.$ Let $\mathbf d=(0,0,-1)^\top$ and $ \mathbf d^\perp=(\sin \varphi,\cos \varphi,0)^\top$, then one has
\begin{equation}\notag
\eta=\left(
\begin{array}{c}
\sin \varphi\\
\cos \varphi\\
\mathrm i\sqrt{1+\frac{\kappa_s^2}{\tau^2}}
\end{array}\right),\quad
\xi=\left(
\begin{array}{c}
\mathrm i\sqrt{\tau^2+\kappa_s^2}\sin \varphi\\
\mathrm i\sqrt{\tau^2+\kappa_s^2}\cos\varphi\\
-\tau
\end{array}
\right).
\end{equation}

Since   $\nu=(0,0,1)^{\top}$ when $x_3=b$,  then one has
\begin{equation} \label{eq:tub3}
T_\nu\mathbf u|_{x_3=b}=\left(\begin{array}{c}
\mu(\partial_1u_3(\mathbf x^{'},b)+\partial_3u_1(\mathbf x^{'},b))\\
\mu(\partial_2u_3(\mathbf x^{'},b)+\partial_3u_2(\mathbf x^{'},b))\\
\lambda(\partial_1u_1(\mathbf x^{'},b)+\partial _2u_2(\mathbf x^{'},b))+(\lambda+2\mu )\partial _3u_3(\mathbf x^{'},b)
\end{array}\right)
\end{equation}
and
\begin{equation}\label{eq:tu03}
T_\nu\mathbf u_0|_{x_3=b}=-\mu\tau \left(\begin{array}{c}
(2+\frac{\kappa_s^2}{\tau^2})\sin\varphi\\
(2+\frac{\kappa_s^2}{\tau^2})\cos\varphi\\
2\mathrm i\sqrt{1+\frac{\kappa_s^2}{\tau^2} }
\end{array}\right)e^{\mathrm i\sqrt{\tau^2+\kappa_s^2}(x_1\sin\varphi+x_2\cos \varphi)-\tau b}.
\end{equation} 
Substitute \eqref{eq:tub3} and \eqref{eq:tu03} into $\mathcal I_4$ in \eqref{eq:I1-4}, it yields that
\begin{equation}\notag
\begin{aligned}
\mathcal I_4&=e^{-\tau b}\int_{\partial \Omega_{\rho,b}\setminus \partial \Omega} e^{\mathrm i\sqrt{\tau^2+\kappa_s^2}(\sin \varphi x_1+\cos \varphi x_2)} \bigg\lbrack \mu \sin \varphi(\partial_3u_1+\partial_1u_3)+\mu\cos \varphi (\partial_3u_2+\partial_2u_3)\\
&+\mu\tau\left((2+\frac{\kappa_s^2}{\tau^2})( u_1\sin\varphi+u_2\cos\varphi )+2\mathrm i \sqrt{1+\frac{\kappa_s^2}{\tau^2}}u_3\right)\\
&+\mathrm i\sqrt{1+\frac{\kappa_s^2}{\tau^2}}\left (\lambda (\partial_1u_1+\partial_2u_2)+(\lambda+2\lambda)\partial_3u_3\right )\bigg\rbrack\mathrm d\sigma,
\end{aligned}
\end{equation}
Then we can directly derive the result in \eqref{eq:estTnu} using a similar analytical manner as in the case of $n=2$.

The proof is complete.
 \end{proof}

We now present the proof of Theorem \ref{thm:source rad}. We prove this theorem  by contradiction, arriving   the integral identity \eqref{eq:integral sum}. By leveraging the auxiliary Lemmas \ref{lem:integral cur}--\ref{lem:rise reg} and Propositions \ref{prop:ubeta}--\ref{prop:Tnu}, and by appropriately setting the parameter $\tau$, we derive the quantitative results \eqref{eq:f1} and \eqref{eq:f2}.\begin{proof}[The proof of Theorem \ref{thm:source rad}] 
Contradictly, assume that $\EM f=\chi_{\Omega}\boldsymbol \varphi$ represents a non-radiating source, namely,  $\mathbf u^\infty=\mathbf 0$, it follows that $\mathbf u=T_{\nu }\mathbf u=\mathbf 0 \ \mbox{on}\ \partial \Omega$ by  Rellich's Lemma. Without loss of generality, assume that the admissible $K$-curvature $\EM p$ is $\EM 0$. 
 {First, we assume that $\EM f=\chi_\Omega \boldsymbol{\varphi}$ is a real-valued function. }
Since $\boldsymbol{\varphi}\in C^\alpha(\overline{\Omega}_{\rho,b})^n$, then one has 
\begin{align}
   \boldsymbol{\varphi}(\EM x)=\boldsymbol{\varphi}(\EM 0) +(\boldsymbol{\varphi}(\EM x)-\boldsymbol{\varphi}(\EM 0)), \quad |(\boldsymbol{\varphi}(\EM x)-\boldsymbol{\varphi}(\EM 0)|
    &\leq \|\boldsymbol{\varphi} \|_{C^\alpha}|\EM x|^\alpha\label{eq:phiu1}.
\end{align}
Therefore, the integral \eqref{eq:integral sum} in Lemma \ref{lem:integral cur} holds immediately.  Furthermore, substitute the estimates in Lemma \ref{lem:estimate},  Proposition \ref{prop:Tnu} and \eqref{eq:phiu1}  into  \eqref{eq:integral sum}, it follows that
\begin{align}
\pi^{\frac{n-1}{2}}&K^{-\frac{n-1}{2}}\tau^{-\frac{n+1}{2}}e^{-\frac{\tau^2+\kappa_s^2}{4\tau K}}\vert \boldsymbol \varphi(\mathbf 0)\cdot \eta \vert
\leq \vert  \boldsymbol \varphi(\mathbf 0)\cdot \eta \vert C_n\frac{1+(\tau b)^{\frac{n-1}{2}}}{\tau^{\frac{n+1}{2}}K^{\frac{n-1}{2}}}e^{-\tau b}\notag\\
&+ \vert  \boldsymbol \varphi(\mathbf 0)\cdot \eta \vert \frac{\sigma (\mathbb S^{n-2})}{n-1}\tau^{-\frac{n+1}{2}}\left(\left( \frac{1}{K_{-}}\right)^{\frac{n-1}{2}}- \left( \frac{1}{K_{+}}\right)^{\frac{n-1}{2}}\right)\gamma(\tau b,\frac{n+1}{2})\label{eq:piktau1}\\
&+C_{n,\alpha,\omega}\| \boldsymbol \varphi\|_{C^\alpha}K^{-(\alpha+n)}+Ce^{-\tau b}K^{-(\beta+\frac{n+1}{2})}(K+\tau  )\|\EM u\|_{C^{1,\beta} }\notag.
\end{align}
By virtue of Lemma \ref{lem:rise reg}, multiplying both sides of \eqref{eq:piktau1} by  $K^{\frac{n-1}{2}}\tau^{\frac{n+1}{2}}e^{-\frac{\tau^2+\kappa_s^2}{4\tau K}}$  on  and utilizing the fact that $e^{\frac{\kappa_s^2}{4\tau K}}$ is bounded,  we can derive
\begin{align}
C_{n,L,M,\alpha,\beta,\lambda,\mu}\vert  \boldsymbol \varphi(\mathbf 0)\cdot \eta \vert 
 &\leq (1+(\tau b)^{\frac{n-1}{2}})e^{\frac{\tau}{4K}-\tau b}+K^{-\varsigma}e^{\frac{\tau}{4K}}\notag\\
 &+\| \boldsymbol \varphi\|_{C^\alpha}K^{-(\alpha+\frac{n+1}{2})}\tau^\frac{n+1}{2}e^{\frac{\tau}{4K}}\label{eq:piktau2}\\
&+ \tau^{\frac{n+1}{2}}K^{-(\beta+\frac{n+1}{2})}(K+\tau )e^{\frac{\tau}{4K}-\tau b}\|\boldsymbol{\varphi}\|_{H^{1} }.\notag
\end{align}
Let $\tau=4K(\ln K^\zeta)$, where $\zeta\in \mathbb R_+$ is a constant to be specified in what follows. Dividing by the constants and $\max(1,\|\boldsymbol{\varphi}\|_{C^\alpha},\|\boldsymbol{\varphi}\|_{H^1})$, then \eqref{eq:piktau2} can be reduced to   
\begin{align}
C_{n,D,L,M,\alpha,\beta,\lambda,\mu,\omega}&\frac{\vert \boldsymbol \varphi(\mathbf 0)\cdot \eta \vert}{\max(1,\|\boldsymbol{\varphi}\|_{C^\alpha},\|\boldsymbol{\varphi}\|_{H^1})^n} \leq(1+({\ln K^\zeta})^{\frac{n-1}{2}})K^{-3\zeta}
+K^{\zeta-\varsigma}\notag\\
&+K^{\zeta-\alpha}(\ln K^\zeta)^{\frac{n+1}{2}} 
+(\ln K^\zeta)^{\frac{n+1}{2}}K^{-\beta+1-3\zeta} \left(1+\ln K^\zeta \right)\label{eq:piktau3}
\end{align}
When $n=2$, setting $\zeta=\frac{1}{2}\min(\alpha,\varsigma)$, $\beta=1-\min(\alpha,\varsigma)$, and utilizing the inequality $(\ln x)^{t_1} \leq \frac{t_1 x^{t_2}}{t_2 e}$ for all $t_1, t_2 > 0$ and $x \geq e$, the right-hand side of \eqref{eq:piktau3} can be estimated by
\begin{equation}\label{eq:21}
C(\ln K)^\frac{n+1}{2}K^{-\frac{1}{2}\min(\alpha,\varsigma)},
\end{equation}
where $C$ is a positive constant independent of $K$. When $n=3$, let $\zeta=\frac{1}{2}\min(\alpha,\varsigma)+\frac{1}{6}$ and $\beta=\frac{1}{2}(1-\min(\alpha,\varsigma))$, we can derive that the right-hand side of  \eqref{eq:piktau3} can be estimated by 
\begin{equation}\notag 
C(\ln K)^\frac{n+1}{2}K^{-\frac{1}{2}\min(\alpha,\varsigma)+\frac{1}{6}}.
\end{equation}

{Since $\boldsymbol{\varphi}$ is real valued in $\Omega$, it is clear that 
\begin{equation}\label{eq:phi0}
\vert \boldsymbol\varphi(\mathbf 0)\vert  < \vert \boldsymbol\varphi(\mathbf 0)\cdot \eta \vert < \sqrt{3}\vert \boldsymbol\varphi(\mathbf 0)\vert.
\end{equation}}
 Combining \eqref{eq:21}-\eqref{eq:phi0} with \eqref{eq:piktau3}, we obtain
  \begin{equation} \label{eq:con1}
\frac{\vert \boldsymbol \varphi(\mathbf 0)\vert}{\max(1,\|\boldsymbol{\varphi}\|_{C^\alpha(\overline{\Omega})^n},\|\boldsymbol{\varphi}\|_{H^1(\Omega)^n})} \leq \mathcal C(\ln K)^{\frac{n+1}{2}}K^{-\frac{1}{2}\min(\alpha,\varsigma)}\ for\ n=2
\end{equation}
and 
\begin{equation} \label{eq:con2}
\frac{\vert \boldsymbol \varphi(\mathbf 0)\vert}{\max(1,\|\boldsymbol{\varphi}\|_{C^\alpha(\overline{\Omega})^n},\|\boldsymbol{\varphi}\|_{H^1(\Omega)^n})}  \leq \mathcal C(\ln K)^{\frac{n+1}{2}}K^{-\frac{1}{2}\min(\alpha,\varsigma)+\frac{1}{6}}\ for\  n=3, 
\end{equation}
which contradicts \eqref{eq:f1} (and \eqref{eq:f2}). 

{When $\EM f=\chi_\Omega\boldsymbol{\varphi}$ is a complex-valued function, we can rewrite  it and $\EM u$ as
$$\EM f=\EM f_\Re+{\rm i}\EM f_\Im=\chi_{\Omega}\boldsymbol{\varphi}_\Re+{\rm i}\chi_{\Omega}\boldsymbol{\varphi}_\Im \ \mbox{and}\ \EM u=\EM u_\Re+{\rm i}\EM u_\Im,$$
where $\EM f_\Re, \EM f_\Im, \EM u_\Re$ and $\EM u_\Im$ are real-valued functions. Then we have 
\begin{align}
\mathcal L\EM u_\Re+\omega^2\EM u_\Re=\EM f_\Re,\quad 
\mathcal L\EM u_\Im+\omega^2\EM u_\Im=\EM f_\Im,\notag
\end{align}
and $\EM u_\Re=T_\nu \EM u_\Re=\EM 0$, $\EM u_\Im=T_\nu \EM u_\Im=\EM 0$ on $\partial \Omega$. Then we can derive that both $\boldsymbol{\varphi}_\Re$ and $\boldsymbol{\varphi}_\Im$ satisfy \eqref{eq:con1} or \eqref{eq:con2}, which concludes that \eqref{eq:con1} and \eqref{eq:con2} hold for $\boldsymbol{\varphi}$ when it is complex-valued.  
} 

The proof is complete.
\end{proof}

\section{Geometrical characterizations of radiating and non-radiating  medium scatterer and  transmission eigenfunctions}\label{sec:3}
In this section, we derive geometric characterizations of inhomogeneous medium scatterers in Theorems \ref{thm:medsmall} and \ref{thm:medKpoint1}, which are proved by employing approaches similar to those used for the source cases in Theorems \ref{thm:small} and \ref{thm:source rad}. Furthermore, as applications, we also establish geometric properties of transmission eigenfunctions.

\subsection{The small inhomogeneous medium scatterer must be radiating.}
Before presenting the main result--Theorem \ref{thm:medsmall}, we give the following crucial proposition, which demonstrates that both the displacement field and the total field can be estimated by the incident wave.
\begin{prop}\label{prop:ust2}
    Let $\Omega\subset \mathbb R^n$ $(n=2,3)$ be an inhomogeneous medium scatterer with the physical configurations $(\lambda,\mu,V)$, where $V\in L^\infty (\mathbb R^n)$ and $supp (V)\subset \Omega$. Let $s=s_{n,\lambda,\mu}$ be a positive constant only depending on $n,\lambda,\mu$. Denote $\varepsilon=d(\Omega)\omega$ with  $\omega\in \mathbb R_{+}$. If $\varepsilon\|V\|_{L^\infty}\leq s_{n,\lambda,\mu}$ and $\EM u^i$ is an incident wave, then the displacement field $\EM u$ and the total field $\EM u^t$  to the medium scattering problem \eqref{eq:mesca} associated with  the radiation condition \eqref{eq:rad} satisfy  
     \begin{align}
        \|\EM u\|_{L^2(\Omega)}\leq \frac{\varepsilon \|V\|_{L^\infty}}{s- \varepsilon\|V\|_{L^\infty}}\|\EM u^i\|_{L^2(\Omega)} \ \mbox{and}\ \|\EM u^t\|_{L^2(\Omega)}\leq \frac{s}{s-\varepsilon \|V\|_{L^\infty}}\|\EM u^i\|_{L^2(\Omega)}.\label{eq:usi2}
    \end{align}
\end{prop}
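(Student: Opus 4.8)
The plan is to recast the medium scattering problem \eqref{eq:mesca} as an equivalent source scattering problem of the form \eqref{eq:lame}, and then to exploit the translation-invariant $L^2$-estimate of Lemma \ref{lem:ufL2} together with an absorption (fixed-point-type) argument. First I would subtract the governing equation $\mathcal L \EM u^i+\omega^2 \EM u^i=\EM 0$ for the incident field from \eqref{eq:mesca}; writing $\EM u^t=\EM u^i+\EM u$ and collecting terms, the zeroth-order contributions combine so that
\begin{equation}\notag
\mathcal L\EM u+\omega^2\EM u=-\omega^2 V\EM u^t\quad\text{in }\mathbb R^n.
\end{equation}
Hence $\EM u$ is precisely the outgoing solution to the source problem with source $\EM f=\chi_\Omega(-\omega^2 V\EM u^t)$, recalling $\supp(V)\subset\Omega$. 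Since $V\in L^\infty$ and $\EM u^t\in H^1_{\mathrm{loc}}(\mathbb R^n)^n$, this source lies in $L^2(\Omega)^n$, so Lemma \ref{lem:ufL2} is applicable.

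Next I would apply the estimate \eqref{eq:uf} directly to $\EM u$:
\begin{equation}\notag
\|\EM u\|_{L^2(\Omega)}\le C_{n,\lambda,\mu}\,d(\Omega)\,\omega^{-1}\,\|\omega^2 V\EM u^t\|_{L^2(\Omega)}\le C_{n,\lambda,\mu}\,d(\Omega)\,\omega\,\|V\|_{L^\infty}\,\|\EM u^t\|_{L^2(\Omega)}.
\end{equation}
Using $\varepsilon=d(\Omega)\omega$ and defining $s:=s_{n,\lambda,\mu}=1/C_{n,\lambda,\mu}$, where $C_{n,\lambda,\mu}$ is the constant of Lemma \ref{lem:ufL2}, this reads $\|\EM u\|_{L^2(\Omega)}\le s^{-1}\varepsilon\|V\|_{L^\infty}\|\EM u^t\|_{L^2(\Omega)}$. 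Then comes the key absorption step: the triangle inequality applied to $\EM u^t=\EM u^i+\EM u$ gives
\begin{equation}\notag
\|\EM u^t\|_{L^2(\Omega)}\le \|\EM u^i\|_{L^2(\Omega)}+s^{-1}\varepsilon\|V\|_{L^\infty}\|\EM u^t\|_{L^2(\Omega)},
\end{equation}
and under the smallness hypothesis $\varepsilon\|V\|_{L^\infty}\le s_{n,\lambda,\mu}$ the last term can be absorbed into the left-hand side, yielding the second estimate in \eqref{eq:usi2}. Substituting this bound back into $\|\EM u\|_{L^2(\Omega)}\le s^{-1}\varepsilon\|V\|_{L^\infty}\|\EM u^t\|_{L^2(\Omega)}$ then produces the first estimate in \eqref{eq:usi2}.

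The main difficulty I anticipate is bookkeeping rather than conceptual: one must pin down the constant $s_{n,\lambda,\mu}$ so that the denominator $s-\varepsilon\|V\|_{L^\infty}$ stays positive and the algebra matches the stated form exactly. The smallness threshold is dictated precisely by the constant in Lemma \ref{lem:ufL2}, and the absorption requires $\varepsilon\|V\|_{L^\infty}<s$; I would therefore take $s_{n,\lambda,\mu}$ slightly below $1/C_{n,\lambda,\mu}$ (or interpret the hypothesis with strict inequality) so that the denominator never vanishes. With that choice the identity $s^{-1}/(1-s^{-1}\varepsilon\|V\|_{L^\infty})=1/(s-\varepsilon\|V\|_{L^\infty})$ gives the claimed constants, and everything else follows from the two elementary inequalities above.
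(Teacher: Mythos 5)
Your proposal is correct and follows essentially the same route as the paper: recast \eqref{eq:mesca} as a source problem with $\mathbf f=-\omega^2 V\mathbf u^t$, apply Lemma \ref{lem:ufL2} to get $\|\mathbf u\|_{L^2(\Omega)}\leq C_{n,\lambda,\mu}\varepsilon\|V\|_{L^\infty}\|\mathbf u^t\|_{L^2(\Omega)}$, and then close the estimate under the smallness condition. The only cosmetic difference is that the paper phrases the final step as convergence of the Neumann series for the Lippmann--Schwinger equation, whereas you use a direct triangle-inequality absorption; these are equivalent, and your observation about needing strict inequality (or $s$ slightly below $1/C_{n,\lambda,\mu}$) to keep the denominator positive applies equally to the paper's version.
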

\begin{proof}
Recall  the medium scattering problem  \eqref{eq:mesca}, by virtue of $\EM u^t=\EM u^i+\EM u$ and $\mathcal L \EM u^i+\omega^2 \EM u^i=\EM 0$, then one has 
$$ \mathcal L\EM u+\omega^2\EM u=\EM f,$$
where $\EM f=-\omega^2V\EM u^t$ and $\EM u$ satisfying Kupradze\ radiation\ condition. 
It is clear that $\EM u^t\in H^2_{loc}(\mathbb R^n)^n$ satisfies  Lippmann-Schwinger equation
\begin{align}
    \EM u^t=\EM u^i+(\mathcal L+\omega^2I)^{-1}\EM f,\label{eq:LSeq}
\end{align}
where $I$ is  a $n\times n$ unity matrix.
By virtue of $\EM u^t\in H^2_{loc}(\mathbb R^n)^n$ and $V\in L^\infty(\mathbb R^n)$  we obtain that  $\EM f\in L^2(\mathbb R^n)^n$ and $supp (\EM f)\subset \Omega$ since $supp (V)\subset \Omega$. Therefore, 
    by Lemma \ref{lem:ufL2},   the following estimate holds:
    \begin{align}
        \|\EM u\|_{L^2(\Omega)}&=\|(\mathcal L+\omega^2I)^{-1}\EM f\|_{L^2(\Omega)}\leq C_{n,\lambda,\mu}d(\Omega)\omega^{-1}\|\EM f\|_{L^2(\Omega)}\notag\\
        &\leq C_{n,\lambda,\mu}\varepsilon\|V\|_{L^\infty(\Omega)}\|\EM u^t\|_{L^2(\Omega)}.\label{eq:usV2}
    \end{align}
    Underlying the assumption  $ \varepsilon \|V\|_{L^\infty(\Omega)}<s$,  it is noted that the Neumann series for the Lippmann-Schwinger equation \eqref{eq:LSeq} converges in $L^2(\Omega)$ since the operator norm above satisfying $C_{n,\lambda,\mu}\varepsilon\|V\|_{L^\infty(\Omega)}<1$. 
   From \eqref{eq:LSeq}, we set
    $$
        \EM u^t=\sum_{j=1}^\infty(-\omega^2(\mathcal L+\omega^2I)^{-1}V\cdot)^j\EM u^i,
 $$
    it follows from the convergence of \eqref{eq:LSeq} that 
\begin{align}
    \|\EM u^t\|_{L^2(\Omega)}\leq \frac{1}{1-C_{n,\lambda,\mu}\varepsilon \|V\|_{L^\infty(\Omega)}}\|\EM u^i\|_{L^2(\Omega)}.\label{eq:uti21}
\end{align}
Substituting \eqref{eq:uti21} into \eqref{eq:usV2}, we obtain the estimate for $\EM u$ in \eqref{eq:usi2}.

The proof is complete.
\end{proof}



Here and in what follows,   for simplicity, we denote $\Upsilon$ as:
   \begin{align}
       \Upsilon(\varepsilon,\|V\|_{L^\infty})=\frac{\varepsilon\|V\|_{L^\infty}}{s- \varepsilon \|V\|_{L^\infty}},\label{eq:Upsilon}
   \end{align}
  it is clear that $\Upsilon$ is non-decreasing in both of its arguments. 
Furthermore,  we denote $\|\EM f\|_{\widetilde {C}^{\delta}(\Omega)^n}$ as:
\begin{align}
    \|\EM f\|_{\widetilde {C}^{\delta}(\Omega)^n}=\|\EM f\|_{L^\infty (\Omega)^n}+\omega^{-\delta} [\EM f]_{\delta,\Omega}.\notag
\end{align}

The following theorem establishes geometric properties of a radiating medium scatterer, characterizing the quantitative relationship between its diameter and its physical parameters at the boundary.
\begin{thm}\label{thm:medsmall}
Let $n,\lambda,\mu, \mathcal M_{max}, \varepsilon_{max},\alpha$ be positive a-priori constants and $\Omega$ be a bounded Lipschitz domain with a connected complement in $\mathbb R^n$ $(n=2,3)$.
    Let $(\Omega;\lambda,\mu ,V)$ be an inhomogeneous medium  scatterer  and  let $\varepsilon =d(\Omega)\omega$, $\varepsilon\leq \varepsilon_{max}$. 
     Assume that $supp(V)\subset \Omega$ and $\|V\|_{\widetilde C^{\delta}(\overline{\Omega})^n}\leq \mathcal M_{max}$ with $\delta\in (0,1]$ for $n=2$ and $\delta \in (0,1/2]$ for $n=3$. Let $\EM u^i\in L^2_{loc}(\mathbb R^n)^n$ be an incident wave. If there holds that 
    \begin{align}
        \sup_{\EM p\in \partial \Omega} \frac{|V(\EM p)\EM u^i(\EM p)|}{\|V\|_{\widetilde{C}^\delta}\|\EM u^i\|_{\widetilde{C}^\delta}} > C\varepsilon^\delta(1+(1+\Upsilon(\varepsilon_{max},\mathcal M_{max}))(1+\varepsilon)\varepsilon^{\frac{n}{2}} ),\label{eq:vui}
     \end{align}
    then $(\Omega;\lambda,\mu,V)$ must be radiating,  where $C $ is a positive constant depending only on a-priori constants. 
\end{thm}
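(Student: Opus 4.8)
The plan is to reduce the medium scattering problem to the source scattering problem of Theorem~\ref{thm:small} and then eliminate the unknown total field $\EM u^t$ in favor of the incident wave via Proposition~\ref{prop:ust2}. First I would rewrite \eqref{eq:mesca} in source form: since $\EM u^t=\EM u^i+\EM u$ and $\mathcal L\EM u^i+\omega^2\EM u^i=\EM 0$, the scattered field solves $\mathcal L\EM u+\omega^2\EM u=\EM f$ with $\EM f=-\omega^2 V\EM u^t=\chi_\Omega(-\omega^2 V\EM u^t)$, so the effective source density is $\boldsymbol\varphi=-\omega^2 V\EM u^t$, supported in $\Omega$. Arguing by contradiction, I assume $(\Omega;\lambda,\mu,V)$ is non-radiating, i.e. $\EM u^\infty\equiv\EM 0$; by Rellich's lemma $\EM u=\EM 0$ in $\mathbb R^n\setminus\overline\Omega$, hence $\EM u\in H^2_0(\Omega)^n$ and $\EM u=T_\nu\EM u=\EM 0$ on $\partial\Omega$. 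The crucial observation is that on $\partial\Omega$ the total field coincides with the incident one, $\EM u^t|_{\partial\Omega}=\EM u^i|_{\partial\Omega}$, so the boundary trace of the effective source density is exactly $\boldsymbol\varphi|_{\partial\Omega}=-\omega^2 V\EM u^i$; this explains the numerator $|V(\EM p)\EM u^i(\EM p)|$ in \eqref{eq:vui}.

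Next I would run the integral-identity argument from the proof of Theorem~\ref{thm:small} line by line for this $\boldsymbol\varphi$. Setting $\boldsymbol\psi=\mathcal L\EM u=\boldsymbol\varphi-\omega^2\EM u=-\omega^2(V\EM u^t+\EM u)$ and using $\int_\Omega\boldsymbol\psi\,\mathrm d\EM x=\EM 0$, which follows exactly as in \eqref{eq:psi0} from $T_\nu\EM u=\EM 0$, the identity \eqref{eq:psim} evaluated at $\EM p\in\partial\Omega$ gives, after dividing by $\omega^2$,
\begin{equation}\notag
|V(\EM p)\EM u^i(\EM p)|\leq (d(\Omega))^\delta\big([V\EM u^t]_{\delta,\Omega}+[\EM u]_{\delta,\Omega}\big)=\varepsilon^\delta\big(\omega^{-\delta}[V\EM u^t]_{\delta,\Omega}+\omega^{-\delta}[\EM u]_{\delta,\Omega}\big).
\end{equation}
It then remains to estimate the two Hölder seminorms by $\|V\|_{\widetilde C^\delta}\|\EM u^i\|_{\widetilde C^\delta}$ with the correct powers of $\varepsilon$ and the factor $1+\Upsilon$.

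For the scattered-field term I would invoke the global energy estimate \eqref{eq:energy} of Lemma~\ref{lem:uholder}, applied with $\EM f=-\omega^2 V\EM u^t$, which controls $\|\EM u\|_{L^\infty}+\omega^{-\delta}[\EM u]_{\delta,\Omega}$ by $C\omega^{n/2}(1+\varepsilon)\|V\EM u^t\|_{L^2(\Omega)}$; bounding $\|V\EM u^t\|_{L^2}\le\|V\|_{L^\infty}\|\EM u^t\|_{L^2}$, inserting $\|\EM u^t\|_{L^2}\le(1+\Upsilon)\|\EM u^i\|_{L^2}$ from Proposition~\ref{prop:ust2} (recall $s/(s-\varepsilon\|V\|_{L^\infty})=1+\Upsilon(\varepsilon,\|V\|_{L^\infty})$), and using $\|\EM u^i\|_{L^2(\Omega)}\le C_n\varepsilon^{n/2}\omega^{-n/2}\|\EM u^i\|_{L^\infty}$, all frequency powers cancel and one obtains $\|\EM u\|_{L^\infty}+\omega^{-\delta}[\EM u]_{\delta,\Omega}\le C(1+\varepsilon)(1+\Upsilon)\varepsilon^{n/2}\|V\|_{\widetilde C^\delta}\|\EM u^i\|_{\widetilde C^\delta}$. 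For the product term I would use $[V\EM u^t]_{\delta,\Omega}\le[V]_{\delta,\Omega}\|\EM u^t\|_{L^\infty}+\|V\|_{L^\infty}[\EM u^t]_{\delta,\Omega}$ and split $\EM u^t=\EM u^i+\EM u$: the $\EM u^i$-contributions are bounded directly by $\|V\|_{\widetilde C^\delta}\|\EM u^i\|_{\widetilde C^\delta}$ (these produce the isolated ``$1$'' in \eqref{eq:vui}), while the $\EM u$-contributions are dominated by the energy bound just obtained and, using $\|V\|_{L^\infty}\le\mathcal M_{max}$, absorbed into $C$. Collecting terms and invoking the monotonicity of $\Upsilon$ to replace $\Upsilon(\varepsilon,\|V\|_{L^\infty})$ by $\Upsilon(\varepsilon_{max},\mathcal M_{max})$ yields, after dividing by $\|V\|_{\widetilde C^\delta}\|\EM u^i\|_{\widetilde C^\delta}$ and taking the supremum over $\EM p\in\partial\Omega$, the reverse of \eqref{eq:vui} — the desired contradiction.

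The main obstacle, and the step demanding the most care, is the Hölder control of the product $V\EM u^t$, since $\EM u^t$ is a priori unknown; the whole argument hinges on feeding the $L^2$-bound of Proposition~\ref{prop:ust2} into the $L^\infty$--Hölder energy estimate of Lemma~\ref{lem:uholder}, so that the unknown field is eliminated in favor of $\EM u^i$ with exactly the frequency weights that cancel. Two ancillary points must be verified: that $\varepsilon\|V\|_{L^\infty}<s_{n,\lambda,\mu}$ so that Proposition~\ref{prop:ust2} applies — guaranteed by $\varepsilon\le\varepsilon_{max}$, $\|V\|_{L^\infty}\le\mathcal M_{max}$ and the standing requirement $\varepsilon_{max}\mathcal M_{max}<s_{n,\lambda,\mu}$ that makes $\Upsilon(\varepsilon_{max},\mathcal M_{max})$ finite — and that $V\EM u^t\in C^\delta(\overline\Omega)^n$, which holds since $V\in\widetilde C^\delta\subset C^\delta$ and $\EM u^t=\EM u^i+\EM u\in C^\delta$ by $\EM u^i\in\widetilde C^\delta$ together with Lemma~\ref{lem:uholder}; here $\|\EM u^i\|_{\widetilde C^\delta}<\infty$ is implicit, as otherwise the left-hand side of \eqref{eq:vui} vanishes and the hypothesis is vacuous.
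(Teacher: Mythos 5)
Your proposal is correct and follows essentially the same route as the paper: reduce \eqref{eq:mesca} to a source problem with effective density $-\omega^2 V\mathbf{u}^t$, note that $\mathbf{u}^t=\mathbf{u}^i$ on $\partial\Omega$ under the non-radiating assumption, run the mean-zero integral identity of Theorem~\ref{thm:small} for $\boldsymbol\psi=-\omega^2(V\mathbf{u}^t+\mathbf{u})$, and eliminate the unknown total field by feeding Proposition~\ref{prop:ust2} into the energy estimate of Lemma~\ref{lem:uholder}. Your explicit remark that $\varepsilon_{max}\mathcal M_{max}<s_{n,\lambda,\mu}$ is needed for $\Upsilon(\varepsilon_{max},\mathcal M_{max})$ to be finite is a point the paper leaves implicit, but it does not change the argument.
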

\begin{proof}
By contradiction, assume that $\EM u^\infty\equiv \EM 0$, it follows from   Rellich's Lemma that $\EM u=\EM 0$ in $\mathbb R^n\setminus \overline{\Omega}$. Note that  
\begin{align}
    \mathcal L \EM u+\omega^2\EM u=\EM f,\notag
\end{align}
    where source term $\EM f= -\omega^2V\EM u^t$ satisfies $supp (\EM f)\subset \Omega$ since $supp(V)\subset \Omega$. Let $\boldsymbol{\psi}:=\EM f|_\Omega-\omega^2\EM u|_\Omega$, then one has $\boldsymbol{\psi}=-\omega^2V\EM u^i$ on $\partial \Omega$ when $\EM u=0$ in $\mathbb R^n\setminus \overline{\Omega}$.  Similar to the argument presented  in  the proof of Theorem \ref{thm:small}, it is noted that $\int_{\Omega}\boldsymbol{\psi(\EM x)}\mathrm d\EM x=\EM 0$. Therefore, for any $\EM p\in \partial \Omega$, it yields that
    \begin{align}
        \omega^2|V(\EM p)\EM u^i(\EM p)|&=|\boldsymbol{\psi}(\EM p)|\leq \frac{1}{m(\Omega)} \left \vert \int_{\Omega}(\boldsymbol{\psi}({\EM p})-\boldsymbol{\psi(\EM x)})\mathrm d\EM x\right \vert \leq [\boldsymbol{\psi}]_{\delta,\Omega} d(\Omega)^\delta\notag\\
       &
       \leq d(\Omega)^\delta\omega^\delta\|\boldsymbol{\psi}\|_{\widetilde{C}^\delta}\leq \varepsilon^\delta( \|\EM f\|_{\widetilde C^\delta}+\omega^{ 2}\|\EM u\|_{\widetilde C^\delta})\notag\\
    & \leq \varepsilon^\delta\omega^2( \|V\|_{\widetilde C^\delta}\|\EM u^i\|_{\widetilde C^\delta}+ (1+\mathcal M_{max}) \|\EM u\|_{\widetilde C^\delta})\label{eq:meds2}.
    \end{align}
    By virtue of Lemma \ref{lem:uholder} and the fact that $\|V\|_{L^\infty}\leq \|V\|_{\widetilde{C}^\delta}$,   we obtain
    \begin{align}
        \|\EM u\|_{\widetilde{C}^\delta}
        &\leq C_{n,\lambda,\mu}\omega^{\frac{n}{2}-1}(\omega^{-1}+d(\Omega))\|\omega^2V\EM u^t\|_{L^2}\notag\\
        &\leq C_{n,\lambda,\mu}\omega^{\frac{n}{2}}(1+\varepsilon) \|V\|_{\widetilde C^\delta}\|\EM u^t\|_{L^2}.\label{eq:meds3}
    \end{align}
    Moreover, by \eqref{eq:Upsilon} and the estimate $\|\EM u^i\|_{L^2(\Omega)}\leq \sqrt{m(\Omega)}\|\EM u^i\|_{L^\infty(\Omega)}$, we have
    \begin{align}
        \|\EM u^t\|_{L^2(\Omega) }&\leq \|\EM u^i\|_{L^2 (\Omega)}+\|\EM u\|_{L^2(\Omega) }\leq (1+\Upsilon (\varepsilon,\|V\|_{L^\infty}))\|\EM u^i\|_{L^2(\Omega)}\notag\\
        &\leq C_n(1+\Upsilon (\varepsilon_{max},\mathcal M_{max}))\varepsilon^\frac{n}{2}\omega^{-\frac{n}{2}}\|\EM u^i\|_{\widetilde C^{\delta}(\overline{\Omega})}. \label{eq:meds4}
    \end{align}
Combining \eqref{eq:meds3} and \eqref{eq:meds4}  with \eqref{eq:meds2}, it yields that 

\begin{align}
    \left\vert \frac{V(\EM p)\EM u^i(\EM p)}{\|V\|_{\widetilde{C}^\delta}\|\EM u^i\|_{\widetilde{C}^\delta}}\right \vert \leq C\varepsilon^\delta(1+(1+\Upsilon(\varepsilon_{max},\mathcal M_{max}))(1+\varepsilon)\varepsilon^{\frac{n}{2}} ).\notag
\end{align}
Taking the supremum over $\EM p\in \partial \Omega$, we arrive at a contradiction with \eqref{eq:vui}.

The proof of complete.
\end{proof}

By virtue of  Theorem \ref{thm:medsmall}, we obtain the following corollary directly.
\begin{cor}\label{cor:non-me1}
Under the same configurations as in Theorem \ref{thm:medsmall}, if the medium $(\Omega;\lambda,\mu, V)$ is non-radiating, namely $\EM u^\infty=\EM 0$, then one has
\begin{align}
        \sup_{\EM p\in \partial \Omega} \frac{|V(\EM p)\EM u^i(\EM p)|}{\|V\|_{\widetilde{C}^\delta}\|\EM u^i\|_{\widetilde{C}^\delta}} \leq C\varepsilon^\delta(1+(1+\Upsilon(\varepsilon_{max},\mathcal M_{max}))(1+\varepsilon)\varepsilon^{\frac{n}{2}} ),\notag
     \end{align}
	where $C$ is a positive constant depending on a-priori constants.
\end{cor}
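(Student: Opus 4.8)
The plan is to observe that Corollary \ref{cor:non-me1} is nothing more than the logical contrapositive of Theorem \ref{thm:medsmall}, so essentially no new analysis is required. Theorem \ref{thm:medsmall} asserts, under the stated a-priori bounds $\varepsilon \leq \varepsilon_{max}$ and $\|V\|_{\widetilde C^{\delta}(\overline{\Omega})^n}\leq \mathcal M_{max}$ (which are precisely the hypotheses carried over verbatim into the corollary), that whenever the strict inequality \eqref{eq:vui} holds the medium scatterer $(\Omega;\lambda,\mu,V)$ must be radiating, i.e.\ $\mathbf u^\infty\not\equiv\mathbf 0$.

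First I would record the contrapositive of this implication: if $(\Omega;\lambda,\mu,V)$ fails to radiate---that is, if $\mathbf u^\infty=\mathbf 0$, which is exactly the non-radiating hypothesis of the corollary---then the strict inequality \eqref{eq:vui} cannot hold. Negating a strict inequality yields the reversed non-strict inequality, so
\begin{align}
\sup_{\mathbf{p}\in \partial \Omega} \frac{|V(\mathbf{p})\mathbf{u}^i(\mathbf{p})|}{\|V\|_{\widetilde{C}^\delta}\|\mathbf{u}^i\|_{\widetilde{C}^\delta}} \leq C\varepsilon^\delta\bigl(1+(1+\Upsilon(\varepsilon_{max},\mathcal M_{max}))(1+\varepsilon)\varepsilon^{\frac{n}{2}}\bigr),\notag
\end{align}
with the very same constant $C=C(n,\lambda,\mu,\mathcal M_{max},\varepsilon_{max},\alpha)$ furnished by Theorem \ref{thm:medsmall}. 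This is exactly the claimed bound, and since $C$ depends only on the a-priori constants, the dependence asserted in the corollary is automatic.

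Because the argument is purely a restatement via contraposition, there is no substantive obstacle: the only point that warrants a line of care is the passage from the strict inequality in \eqref{eq:vui} to the non-strict inequality in the conclusion, and the verification that the hypotheses of Theorem \ref{thm:medsmall} (the Lipschitz/connected-complement assumption on $\Omega$, the support and norm bounds on $V$, and the incident field $\mathbf u^i\in L^2_{loc}(\mathbb R^n)^n$) are all inherited unchanged by the corollary's setup. Once these are noted, the proof concludes immediately.
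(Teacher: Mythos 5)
Your proof is correct and matches the paper's own treatment: the paper obtains Corollary \ref{cor:non-me1} directly from Theorem \ref{thm:medsmall} by exactly this contrapositive argument, with the same constant $C$ and the same inherited hypotheses. Nothing further is needed.
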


\subsection{Radiating property of an inhomogeneous medium with high-curvature boundary points} 
We now investigate the inhomogeneous medium scattering problem for scatterers containing admissible $K$-curvature points. Theorem~\ref{thm:medKpoint1} establishes an exact quantitative relationship between the physical parameters of an inhomogeneous medium and the curvature $K$ at admissible high-curvature boundary points.
\begin{thm}\label{thm:medKpoint1}
 Let $L,M,\omega,\lambda,\mu,D,\Xi,M_i, n,\alpha$ be the positive a-priori constants. 
    Let $(\Omega;\lambda,\mu,V)$ be an inhomogeneous medium scatterer in $\mathbb R^n$ $(n=2,3)$, where $\Omega$ is a bounded Lipschitz domain with a connected complement. Suppose that $\Omega$ has diameter at most $D$ and $supp(V)\subset \Omega$ and  $V\in C^{\alpha}(\overline{\Omega})^n\cap H^1(\Omega)^n$, {$\alpha \in (0,1)$ for $n=2$ and $\alpha \in (1/3,1)$ for $n=3$}.  Consider the medium  scattering problem \eqref{eq:mesca} associated with the radiation condition \eqref{eq:rad}. 
    Suppose that $\mathbf q\in \partial \Omega$ is an admissible $K$-curvature point with parameters $K,L,M,\varsigma$ and $K\geq e$.
    Suppose that  $\max(\|\boldsymbol {V}\|_{C^\alpha(\overline {\Omega})^n},
    \|\boldsymbol {V }\|_{H^1(\Omega)^n})\leq \Xi$ and $\max(\|\EM u^i\|_{H^1(\mathbb R^n)^n},\|\EM u^i\|_{C^\alpha(\mathbb R^n)^n})\leq M_i$.
    
    For any angular frequency $\omega\in \mathbb R_{+}$, if there holds that
\begin{equation}\label{eq:med1}
\vert V(\EM q)\EM u^i(\EM q)\vert \geq \mathcal C(\ln K)^{\frac{n+1}{2}}K^{-\frac{1}{2}\min(\alpha,\varsigma)} \quad \mbox{for}\ n=2
\end{equation}
or 
\begin{equation}\label{eq:med2}
\vert V(\EM q)\EM u^i(\EM q)\vert\geq \mathcal C(\ln K)^{\frac{n+1}{2}}K^{-\frac{1}{2}\min(\alpha,\varsigma)+\frac{1}{6}},\ \min(\alpha,\varsigma)\in(1/3,1)\quad \mbox{for}\ n=3,
\end{equation}
where $\mathcal C$ is a positive constant depending on   a-priori constants, then $(\Omega;\lambda,\mu,V )$ must be radiating. 
\end{thm}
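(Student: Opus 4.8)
The plan is to recast the medium scattering problem as an equivalent source problem and then invoke the non-radiating estimate of Corollary~\ref{cor:non-rad}, exactly parallel to the proof of Theorem~\ref{thm:source rad}. Arguing by contradiction, suppose $(\Omega;\lambda,\mu,V)$ is non-radiating, i.e. $\mathbf{u}^\infty\equiv\mathbf{0}$. Since the complement of $\Omega$ is connected, Rellich's lemma forces $\mathbf{u}=\mathbf{0}$ in $\mathbb{R}^n\setminus\overline{\Omega}$, so that $\mathbf{u}=T_\nu\mathbf{u}=\mathbf{0}$ on $\partial\Omega$. Using $\mathbf{u}^t=\mathbf{u}^i+\mathbf{u}$ and $\mathcal{L}\mathbf{u}^i+\omega^2\mathbf{u}^i=\mathbf{0}$, equation \eqref{eq:mesca} becomes $\mathcal{L}\mathbf{u}+\omega^2\mathbf{u}=\mathbf{f}$ with effective source $\mathbf{f}=-\omega^2 V\mathbf{u}^t=\chi_\Omega\boldsymbol{\varphi}$, where $\boldsymbol{\varphi}:=-\omega^2 V\mathbf{u}^t$ is supported in $\Omega$ because $\mathrm{supp}(V)\subset\Omega$. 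Thus the configuration is realized as a non-radiating elastic source of intensity $\boldsymbol{\varphi}$, and $\mathbf{q}$ is still an admissible $K$-curvature point, so Corollary~\ref{cor:non-rad} is poised to apply once its hypotheses on $\boldsymbol{\varphi}$ are checked.

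The decisive step is to verify the a-priori hypotheses of Corollary~\ref{cor:non-rad}, namely $\boldsymbol{\varphi}\in C^\alpha(\overline{\Omega})^n\cap H^1(\Omega)^n$ together with $\max(1,\|\boldsymbol{\varphi}\|_{C^\alpha(\overline{\Omega})^n},\|\boldsymbol{\varphi}\|_{H^1(\Omega)^n})\le\mathcal{G}$ for a constant $\mathcal{G}$ depending only on the a-priori constants $n,D,\omega,\lambda,\mu,\alpha,\Xi,M_i$. First I would control the total field: the well-posedness of \eqref{eq:mesca} together with Proposition~\ref{prop:ust2} bounds $\|\mathbf{u}^t\|_{L^2(\Omega)}$ by $\|\mathbf{u}^i\|_{L^2(\Omega)}$, and a regularity bootstrap for the Lam\'e operator (interior $W^{2,p}$ estimates followed by the embedding $W^{2,p}\hookrightarrow C^\alpha$) upgrades $\mathbf{u}^t$ to $C^\alpha(\overline{\Omega})^n\cap H^1(\Omega)^n$ with norms governed by $M_i$ and $\Xi$. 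The product $\boldsymbol{\varphi}=-\omega^2 V\mathbf{u}^t$ is then estimated via the Banach-algebra property of $C^\alpha(\overline{\Omega})$ and the rule $\nabla(V\mathbf{u}^t)=(\nabla V)\mathbf{u}^t+V\nabla\mathbf{u}^t$ (using $V,\mathbf{u}^t\in L^\infty$), giving $\|\boldsymbol{\varphi}\|_{C^\alpha(\overline{\Omega})^n}+\|\boldsymbol{\varphi}\|_{H^1(\Omega)^n}\le C\,\|V\|_{C^\alpha(\overline{\Omega})^n\cap H^1(\Omega)^n}\,\|\mathbf{u}^t\|_{C^\alpha(\overline{\Omega})^n\cap H^1(\Omega)^n}\le\mathcal{G}$, crucially \emph{uniformly in} $K$.

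With the hypotheses secured, I would evaluate at the curvature point. Since $\mathbf{u}=\mathbf{0}$ on $\partial\Omega$, one has $\mathbf{u}^t(\mathbf{q})=\mathbf{u}^i(\mathbf{q})$, hence $\boldsymbol{\varphi}(\mathbf{q})=-\omega^2 V(\mathbf{q})\mathbf{u}^i(\mathbf{q})$ and $|\boldsymbol{\varphi}(\mathbf{q})|=\omega^2|V(\mathbf{q})\mathbf{u}^i(\mathbf{q})|$. Applying Corollary~\ref{cor:non-rad} to the non-radiating source $\boldsymbol{\varphi}$ yields $|\boldsymbol{\varphi}(\mathbf{q})|\le\mathcal{R}(\ln K)^{(n+1)/2}K^{-\frac12\min(\alpha,\varsigma)}$ for $n=2$, and the analogue with exponent $-\frac12\min(\alpha,\varsigma)+\frac16$ for $n=3$, where $\mathcal{R}=\mathcal{R}(n,D,L,M,\mathcal{G},\alpha,\lambda,\mu,\varsigma,\omega)$. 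Dividing by $\omega^2$ gives $|V(\mathbf{q})\mathbf{u}^i(\mathbf{q})|\le\omega^{-2}\mathcal{R}(\ln K)^{(n+1)/2}K^{-\frac12\min(\alpha,\varsigma)}$ (resp. the $n=3$ exponent), so choosing the constant $\mathcal{C}$ in \eqref{eq:med1}--\eqref{eq:med2} to exceed $\omega^{-2}\mathcal{R}$ contradicts the standing hypothesis and forces $(\Omega;\lambda,\mu,V)$ to be radiating.

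The main obstacle, and essentially the only point beyond Theorem~\ref{thm:source rad}, is the uniform-in-$K$ a-priori control of $\|\boldsymbol{\varphi}\|_{C^\alpha(\overline{\Omega})^n}+\|\boldsymbol{\varphi}\|_{H^1(\Omega)^n}$: because the effective source carries the \emph{unknown} total field $\mathbf{u}^t$, one must guarantee that the regularity bounds for $\mathbf{u}^t$ depend only on the a-priori data and not on the fine geometry near $\mathbf{q}$, in particular not on the curvature $K$. This is precisely the role played by Proposition~\ref{prop:ust2} combined with the elliptic bootstrap; once the constant $\mathcal{R}$ above is seen to be independent of $K$, the contradiction with the $K$-uniform lower bounds \eqref{eq:med1}--\eqref{eq:med2} is immediate.
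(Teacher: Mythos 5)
Your overall strategy coincides with the paper's proof: argue by contradiction, use Rellich's lemma to get $\mathbf{u}=T_\nu\mathbf{u}=\mathbf{0}$ on $\partial\Omega$, recast \eqref{eq:mesca} as a source problem with effective source $\mathbf{f}=-\omega^2 V\mathbf{u}^t$, verify that $\|\mathbf{f}\|_{C^\alpha(\overline{\Omega})^n}$ and $\|\mathbf{f}\|_{H^1(\Omega)^n}$ are controlled by the a-priori constants uniformly in $K$, observe that $\mathbf{f}(\mathbf{q})=-\omega^2 V(\mathbf{q})\mathbf{u}^i(\mathbf{q})$ because $\mathbf{u}$ vanishes on the boundary, and then contradict \eqref{eq:med1}--\eqref{eq:med2} via the non-radiating source estimate. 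You also correctly isolate the one genuinely new point beyond Theorem~\ref{thm:source rad}, namely the $K$-uniform a-priori control of the effective source.

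There is, however, one step in your plan that would fail as written: you propose to bound $\|\mathbf{u}^t\|_{L^2(\Omega)}$ by $\|\mathbf{u}^i\|_{L^2(\Omega)}$ via Proposition~\ref{prop:ust2}, but that proposition carries the smallness hypothesis $\varepsilon\|V\|_{L^\infty}\le s_{n,\lambda,\mu}$ with $\varepsilon=d(\Omega)\omega$, which is \emph{not} among the assumptions of Theorem~\ref{thm:medKpoint1} (the diameter bound $D$ and the norm bound $\Xi$ are arbitrary a-priori constants, so the Neumann-series contraction underlying Proposition~\ref{prop:ust2} need not converge). The paper sidesteps this by quoting the general well-posedness/stability estimate $\|\mathbf{u}\|_{H^1(\Omega)^n}\le C\|\mathbf{u}^i\|_{L^2(\Omega)^n}$ from \cite[Remark 2.1]{DTLT}, which requires no smallness, and then upgrades regularity through Lemma~\ref{lem:rise reg} (giving $\mathbf{u}\in C^{1,\beta}(\overline{\Omega})^n$ with norm controlled by $\|\mathbf{f}\|_{H^1}$) rather than through an interior $W^{2,p}$ bootstrap; your bootstrap would also need boundary regularity up to $\partial\Omega$, which is exactly what Lemma~\ref{lem:rise reg} supplies. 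Once that substitution is made, the remainder of your argument (the product estimates for $V\mathbf{u}^t$ in $C^\alpha\cap H^1$, the evaluation at $\mathbf{q}$, and the application of Corollary~\ref{cor:non-rad}) matches the paper's proof.
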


\begin{proof}
    By contradiction, assume that $(\Omega;\lambda,\mu,V )$ is non-radiating, i.e. $\EM u^\infty\equiv \EM 0$. 
    By Rellich's lemma, we have $\mathbf{u} = \mathbf{0}$ in $\mathbb{R}^n \setminus \overline{\Omega}$, which implies $\mathbf{u} = T_\nu \mathbf{u} = \mathbf{0}$ on $\partial \Omega$.
    Recall that $\EM u=\EM u^t-\EM u^i$ satisfies 
   $$
            \mathcal L\EM u+\omega^2\EM u=\EM f   $$
   where  $\EM f=-\omega^2V\EM u^t$.  it is clear that $supp(\EM f)\subset \Omega$ since $supp(V)\subset \Omega.$ By virtue of  $V\in H^1(\Omega)$ and $\EM u^t\in H^1(\Omega)$, it follows that $\EM f\in H^1(\Omega)$. Then by   Lemma \ref{lem:rise reg}, we obtain that $\EM u\in C^{1,\beta}(\overline {\Omega})$ and $\|\EM u\|_{C^{1,\beta}(\overline{\Omega})}\leq C\|\EM f\|_{H^1(\Omega)},$
    where $\beta\in(0,1]$ for $n=2$ and $\beta\in (0,1/2]$ for $n=3$. 
    
    We now show that the norms $\|\mathbf{f}\|_{H^1(\Omega)^n}$ and $\|\mathbf{f}\|_{C^\alpha(\overline{\Omega})^n}$ depend  solely on  a-priori constants, independent of the curvature parameter $K$.
    By \cite[Remark 2.1]{DTLT}, we know that $\|\EM u\|_{H^1(\Omega)^n}\leq C\|\EM u^i\|_{L^2(\Omega)^n}\leq C\|\EM u^i\|_{H^1(\Omega)^n},$ where $C$ is a constant depending on  the  a-priori constants.  Therefore, it follows that $\|\EM f\|_{H^1(\Omega)^n}$ is bounded by   a-priori constants, indicating that $\EM u\in C^{1,\beta}(\overline{\Omega})$ with a norm bounded by   a-priori constants. 
    Moreover, since $\EM u^i$ is H\"older-continuous, one has $\EM u^t$ is H\"older-continuous which implies that $\EM f\in C^{\alpha}(\overline{\Omega})$ with a norm bounded by   a-priori constants. Therefore, we have $\max(\|\EM f\|_{C^{\alpha(\overline{\Omega})}},\|\EM f\|_{H^1(\Omega)})\leq C$, where $C$ is a positive constant depending on   a-priori constants.

    Thus, by Theorem \ref{thm:source rad}, it yields that
    \begin{align}
        |\EM f(\EM q)|\leq C(\ln K)^{\frac{n+1}{2}}K^{-\frac{1}{2}\min(\alpha,\varsigma)}\ \mbox{for}\ n=2\notag
    \end{align}
    or 
      \begin{align}
        |\EM f(\EM q)|\leq C(\ln K)^{\frac{n+1}{2}}K^{-\frac{1}{2}\min(\alpha,\varsigma)+\frac{1}{6}}\ \mbox{for}\ n=3\notag.
    \end{align}
    Note that $\EM f=-\omega^2V\EM u^t$ and $\EM u=\EM 0$ on  $\partial \Omega$, which indicates   that $\EM f(\EM q)=-\omega^2V(\EM q)\EM u^i(\EM q)$. We then derive a contradiction with \eqref{eq:med1} or \eqref{eq:med2}, which implies that $(\Omega;\lambda, \mu, V)$ must be radiating.
    
        The proof is complete.
\end{proof}

The following corollary can be derived from Theorem \ref{thm:medKpoint1}, showing  a geometric characterization of non-radiating inhomogeneous medium scatterer  with admissible $K$-curvature  boundary points.
\begin{cor}\label{cor:non-me2}
Under the same setup as in Theorem \ref{thm:medKpoint1}, if the medium scatterer $\Omega$ is non-radiating, then there holds that
\begin{equation}\notag
\vert V(\EM q)\EM u^i(\EM q)\vert \leq \mathcal C(\ln K)^{\frac{n+1}{2}}K^{-\frac{1}{2}\min(\alpha,\varsigma)} \quad \mbox{for}\ n=2
\end{equation}
or 
\begin{equation}\notag
\vert V(\EM q)\EM u^i(\EM q)\vert\leq  \mathcal C(\ln K)^{\frac{n+1}{2}}K^{-\frac{1}{2}\min(\alpha,\varsigma)+\frac{1}{6}},\ \min(\alpha,\varsigma)\in(1/3,1)\quad \mbox{for}\ n=3,
\end{equation}
where $\mathcal C$ is a positive constant as in Theorem \ref{thm:medKpoint1}.
\end{cor}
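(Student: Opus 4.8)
The statement is precisely the contrapositive of Theorem~\ref{thm:medKpoint1}, so the plan is to extract from that theorem's argument the bound that is actually derived under the non-radiating hypothesis (before the contradiction is invoked). Concretely, I would begin by assuming $\mathbf{u}^\infty \equiv \mathbf{0}$. By Rellich's lemma this forces $\mathbf{u} = \mathbf{0}$ in $\mathbb{R}^n \setminus \overline{\Omega}$, and hence $\mathbf{u} = T_\nu \mathbf{u} = \mathbf{0}$ on $\partial\Omega$. Writing $\mathbf{u} = \mathbf{u}^t - \mathbf{u}^i$ and using $\mathcal{L}\mathbf{u}^i + \omega^2\mathbf{u}^i = \mathbf{0}$, the medium scattering problem \eqref{eq:mesca} recasts as the source problem $\mathcal{L}\mathbf{u} + \omega^2\mathbf{u} = \mathbf{f}$ with $\mathbf{f} = -\omega^2 V\mathbf{u}^t$, whose support lies in $\Omega$ since $\mathrm{supp}(V) \subset \Omega$.

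The key preparatory step, exactly as in the proof of Theorem~\ref{thm:medKpoint1}, is to verify that $\mathbf{f}$ satisfies $\max(\|\mathbf{f}\|_{C^\alpha(\overline{\Omega})^n}, \|\mathbf{f}\|_{H^1(\Omega)^n}) \leq C$ with $C$ depending only on the a-priori constants and \emph{not} on the curvature $K$. For the $H^1$ bound I would invoke the well-posedness estimate $\|\mathbf{u}\|_{H^1(\Omega)^n} \leq C\|\mathbf{u}^i\|_{L^2(\Omega)^n}$ (cf.\ \cite[Remark 2.1]{DTLT}) together with $V \in H^1(\Omega)^n$; for the $C^\alpha$ bound I would use the H\"older continuity of $\mathbf{u}^i$, hence of $\mathbf{u}^t$, combined with $V \in C^\alpha(\overline{\Omega})^n$. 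This is the only step requiring genuine care, since the quantitative conclusion must be uniform in $K$, and it is precisely where Proposition~\ref{prop:ust2} and the cited regularity guarantee the needed $K$-independence.

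With $\mathbf{f}$ so controlled, the displacement field $\mathbf{u}$ corresponds to a non-radiating source possessing an admissible $K$-curvature point at $\mathbf{q}$, so Corollary~\ref{cor:non-rad} applies directly and yields $\vert\mathbf{f}(\mathbf{q})\vert \leq \mathcal{R}(\ln K)^{(n+1)/2}K^{-\frac{1}{2}\min(\alpha,\varsigma)}$ for $n=2$, and the analogous bound with exponent $-\frac{1}{2}\min(\alpha,\varsigma)+\frac{1}{6}$ for $n=3$. Finally, since $\mathbf{u} = \mathbf{0}$ on $\partial\Omega$ gives $\mathbf{u}^t(\mathbf{q}) = \mathbf{u}^i(\mathbf{q})$ and therefore $\mathbf{f}(\mathbf{q}) = -\omega^2 V(\mathbf{q})\mathbf{u}^i(\mathbf{q})$, I would divide by $\omega^2$ and absorb it into the constant $\mathcal{C}$ to obtain the asserted bounds on $\vert V(\mathbf{q})\mathbf{u}^i(\mathbf{q})\vert$. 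I expect no substantive obstacle here, as every quantitative ingredient has already been established in Theorem~\ref{thm:source rad} and Corollary~\ref{cor:non-rad}; the corollary is essentially a restatement of the intermediate estimate inside the proof of Theorem~\ref{thm:medKpoint1}.
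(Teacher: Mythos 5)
Your proposal is correct and follows essentially the same route as the paper: the paper gives no separate proof, noting only that the corollary "can be derived from Theorem \ref{thm:medKpoint1}," and the bound you extract is exactly the intermediate estimate obtained under the non-radiating hypothesis in that theorem's proof (via Rellich's lemma, the reformulation $\mathbf{f}=-\omega^2 V\mathbf{u}^t$ with $K$-independent $C^\alpha$ and $H^1$ control, Corollary \ref{cor:non-rad}, and the identity $\mathbf{f}(\mathbf{q})=-\omega^2 V(\mathbf{q})\mathbf{u}^i(\mathbf{q})$). No gaps.
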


\subsection{The geometrical properties of the transmission eigenfunctions}
Theorems \ref{thm:tans1} and \ref{thm:trans2} present the geometrical properties of transmission eigenfunctions related to system \eqref{eq:trans} under two geometric structures: the scatterer either has a small diameter or exhibits high-curvature points, respectively.
\begin{thm}\label{thm:tans1}
        Let $(\Omega; \lambda, \mu, V)$ be an inhomogeneous medium scatterer in $\mathbb{R}^n$, $n \in {2, 3}$, where $\Omega$ is a bounded Lipschitz domain with a diameter of at most $D$ and a connected complement. 
        Suppose that $\varepsilon = d(\Omega) \omega$, $V \in \widetilde{C}^\delta(\overline{\Omega})^n$, and $\inf_{\partial \Omega} |V| > 0$, where $\delta \in (0, 1]$ for $n = 2$ and $\delta \in (0, 1/2]$ for $n = 3$. Let $\EM w, \EM v \in L^2(\Omega)^n$ be a pair of transmission eigenfunctions satisfying the transmission eigenvalue system \eqref{eq:trans} corresponding to the transmission eigenvalue $\omega$.
    %
      If $\|\EM w\|_{\widetilde C^\delta(\overline{\Omega})^n}\leq 1$, then there holds that
      \begin{align}
        \sup_{\partial \Omega}|\EM w|\leq C\frac {\|V\|_{\widetilde C^\delta}}{\inf_{\partial \Omega}|V|}\varepsilon^\delta(1+(1+\varepsilon)\varepsilon^{\frac{n}{2}}),\label{eq:transmall}
    \end{align}
    where $C=C_{n,\lambda,\mu}$ is a positive constant independent of $\omega$ and $\varepsilon$. 
\end{thm}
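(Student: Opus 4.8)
The plan is to reduce the statement to the non-radiating source machinery already developed, by recognizing that a transmission eigenpair manufactures a non-radiating source whose boundary intensity is exactly $-\omega^2 V\EM w$. Concretely, I would set $\EM u := \EM w - \EM v$ on $\Omega$ and extend it by zero to $\mathbb R^n$. Subtracting the two bulk equations in \eqref{eq:trans} gives $\mathcal L \EM u + \omega^2 \EM u = -\omega^2 V \EM w =: \EM f = \chi_\Omega\boldsymbol\varphi$ with $\boldsymbol\varphi = -\omega^2 V\EM w$, while the transmission conditions $\EM w = \EM v$, $T_\nu\EM w = T_\nu\EM v$ on $\Gamma$ yield $\EM u = T_\nu\EM u = \EM 0$ on $\partial\Omega$. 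Hence the zero extension lies in $H^2_0(\Omega)^n$ and is an outgoing solution vanishing in $\mathbb R^n\setminus\overline\Omega$; that is, $(\Omega;\EM f)$ is precisely a non-radiating source. The regularity $\boldsymbol\varphi\in\widetilde C^\delta(\overline\Omega)^n$ follows from the hypotheses $V\in\widetilde C^\delta$ and $\|\EM w\|_{\widetilde C^\delta}\le 1$ together with the Banach-algebra bound $\|V\EM w\|_{\widetilde C^\delta}\le\|V\|_{\widetilde C^\delta}\|\EM w\|_{\widetilde C^\delta}\le\|V\|_{\widetilde C^\delta}$.

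With this reduction in hand, I would replay the core of the proof of Theorem~\ref{thm:small} (equivalently Theorem~\ref{thm:medsmall}). Put $\boldsymbol\psi:=\EM f|_\Omega-\omega^2\EM u|_\Omega$. Since $\EM u\in H^2_0(\Omega)^n$, integrating $\mathcal L\EM u$ against each basis vector and using $T_\nu\EM u=\EM 0$ on $\partial\Omega$ as in \eqref{eq:psi0} gives $\int_\Omega\boldsymbol\psi\,\mathrm d\EM x=\EM 0$; and because $\EM u=\EM 0$ on $\partial\Omega$, for $\EM p\in\partial\Omega$ we have $\boldsymbol\psi(\EM p)=\boldsymbol\varphi(\EM p)=-\omega^2 V(\EM p)\EM w(\EM p)$. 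Consequently, exactly as in \eqref{eq:psim}, $\omega^2|V(\EM p)\EM w(\EM p)|=|\boldsymbol\psi(\EM p)|=\frac{1}{m(\Omega)}\big|\int_\Omega(\boldsymbol\psi(\EM p)-\boldsymbol\psi(\EM x))\,\mathrm d\EM x\big|\le[\boldsymbol\psi]_{\delta,\Omega}\,d(\Omega)^\delta\le\varepsilon^\delta\|\boldsymbol\psi\|_{\widetilde C^\delta}$, where $\varepsilon=d(\Omega)\omega$.

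It then remains to estimate $\|\boldsymbol\psi\|_{\widetilde C^\delta}\le\|\boldsymbol\varphi\|_{\widetilde C^\delta}+\omega^2\|\EM u\|_{\widetilde C^\delta}$. The first term is $\le\omega^2\|V\|_{\widetilde C^\delta}$ by the algebra bound above. For the second, I would apply Lemma~\ref{lem:uholder} to $\EM u$ with source $\EM f=-\omega^2 V\EM w$, using $\|\EM f\|_{L^2(\Omega)^n}\le\omega^2\|V\|_{L^\infty}\|\EM w\|_{L^2(\Omega)^n}\le C_n\omega^2\|V\|_{L^\infty}\varepsilon^{n/2}\omega^{-n/2}$ (from $\|\EM w\|_{L^\infty}\le1$ and $m(\Omega)\le C_n d(\Omega)^n$); the powers of $\omega$ collapse to $\omega^2\|\EM u\|_{\widetilde C^\delta}\le C_{n,\lambda,\mu}\omega^2\|V\|_{\widetilde C^\delta}(1+\varepsilon)\varepsilon^{n/2}$. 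Hence $\omega^2|V(\EM p)\EM w(\EM p)|\le C\varepsilon^\delta\omega^2\|V\|_{\widetilde C^\delta}(1+(1+\varepsilon)\varepsilon^{n/2})$; dividing by $\omega^2|V(\EM p)|\ge\omega^2\inf_{\partial\Omega}|V|>0$ and taking the supremum over $\EM p\in\partial\Omega$ yields \eqref{eq:transmall}. The only genuinely delicate point is the reduction step: one must justify that the zero extension of $\EM w-\EM v$ is globally $H^2$ (so that Lemma~\ref{lem:uholder} applies and the boundary integral of $\boldsymbol\psi$ vanishes), which rests on the two matched transmission conditions promoting the a priori $L^2$ eigenfunctions to functions with well-defined, vanishing Cauchy data; the remaining estimates are routine adaptations of the source case.
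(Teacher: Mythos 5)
Your proposal is correct and follows essentially the same route as the paper: both reduce the transmission eigenpair to a non-radiating source $\mathbf f=-\omega^2 V\mathbf w$ via the zero extension of $\mathbf w-\mathbf v$, and then invoke the small-support non-radiation bound before dividing by $\omega^2\inf_{\partial\Omega}|V|$. The only cosmetic difference is that the paper cites Corollary \ref{cor:non-rads} directly, whereas you inline its proof (the $\boldsymbol\psi$ mean-value argument and Lemma \ref{lem:uholder}); the substance is identical.
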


\begin{rem}
It is noted that if $\frac{\|V\|_{\tilde{C}^\delta}}{\inf_{\partial \Omega}|V|}$ is bounded, then $\EM w$ is nearly vanishing on the boundary of $\Omega$ for sufficiently  small $\varepsilon$. 	
\end{rem}

\begin{proof}
    Since $(\EM w,\EM v)$ is a pair of transmission eigenfunctions to transmission eigenvalue system \eqref{eq:trans}, it is clear that $\EM w-\EM v=\EM0$ on $\partial \Omega$.
    Extend $(\EM w-\EM v)$ and $\EM f=-\omega^2V\EM w$ by zero to $\mathbb R^n\setminus {\overline{\Omega}}$.  Then one has
 $\EM f=-\omega^2 V\EM w$, then one has 
    $$ \mathcal L(\EM w-\EM v)+\omega^2(\EM w-\EM v)=\chi_{\Omega}\EM f\quad \mbox{in}\quad\mathbb R^n,$$ 
and  $\EM w-\EM v$ is a trivially an outgoing solution. 
      Due to $(\EM w-\EM v)^\infty\equiv \EM 0$, then by Corollary \ref{cor:non-rads}, we derive that 
    $$\sup_{\partial \Omega}|\EM f|\leq C\|\EM f\|_{\widetilde{C}^\delta} \varepsilon^\delta(1+(1+\varepsilon)\varepsilon^{\frac{n}{2}}).$$
   Dividing by $\omega^2\inf_{\partial \Omega}|V|$ and utilizing $\|\EM w\|_{\tilde{C}^{\delta}(\Omega)^n}\leq 1$,  we   obtain  \eqref{eq:transmall}.

    The proof is complete.
\end{proof}

\begin{thm}\label{thm:trans2}
Let $L,M,\varsigma,D,\omega,\lambda,\mu, \Xi, c, \alpha\in(0,1)$ and $n\in\{2,3\}$ be some positive a-priori constants. 
Let  $\Omega\subset \mathbb R^n$ be a bounded Lipschitz domain with a connected complement and with a diameter at most $D$. Assume that $\mathbf q\in \partial \Omega$ is an admissible K-curvature point with parameters $L,M,\varsigma$ and $K\geq e$. Let $\EM w,\EM v\in L^2(\Omega)^n$ be a pair of transmission eigenfunctions to the transmission eigenvalue system \eqref{eq:trans} associated with $\omega$. Assume that   $V\in C^\alpha(\overline{\Omega})^n\cap H^1(\Omega)^n$ and $\max(1, \|V\|_{C^\alpha(\overline{\Omega})^n}, \|V\|_{H^1(\Omega)^n})<\Xi$ and $|V(\EM q)|>c>0$. If $\EM w\in C^\alpha(\overline{\Omega })^n\cap H^1(\Omega)^n$  and  $\max(\|\EM w\|_ {C^\alpha(\overline{\Omega })^n},\|\EM w\|_{H^1(\Omega)^n})\leq 1$, 
then one has
\begin{equation}\label{eq:trans1}
\vert \mathbf w(\mathbf q)\vert \leq \mathcal C (\ln K)^\frac{n+1}{2}K^{-\frac{1}{2}\min(\alpha,\varsigma)} \ \mbox{for}\ n=2
\end{equation}
and 
\begin{equation}\label{eq:trans2}
\vert \mathbf w(\mathbf q)\vert \leq \mathcal C  (\ln K)^\frac{n+1}{2}K^{-\frac{1}{2}\min(\alpha,\varsigma)+\frac{1}{6}},\ \min(\alpha,\varsigma)\in(1/3,1)\ \mbox{for}\ n=3,
\end{equation}
where $\mathcal C$ is a positive constant only depending on   a-priori constants.
\end{thm}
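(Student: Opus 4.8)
The plan is to reduce the statement to the non-radiating source estimate at an admissible $K$-curvature point, namely Corollary~\ref{cor:non-rad}, exactly as the small-diameter case (Theorem~\ref{thm:tans1}) was reduced to Corollary~\ref{cor:non-rads}. First I would form the difference $\EM w-\EM v$ of the transmission eigenfunctions. Subtracting the first two equations of \eqref{eq:trans} gives $\mathcal L(\EM w-\EM v)+\omega^2(\EM w-\EM v)=-\omega^2V\EM w$ in $\Omega$, so with $\EM f=\chi_\Omega\boldsymbol\varphi$ and $\boldsymbol\varphi=-\omega^2V\EM w$ we recover the source system \eqref{eq:lame}. Because the matching conditions $\EM w=\EM v$ and $T_\nu\EM w=T_\nu\EM v$ hold on $\partial\Omega$, the zero extension of $\EM w-\EM v$ across $\partial\Omega$ produces no singular contribution on $\partial\Omega$ and is therefore an outgoing solution that vanishes identically in $\mathbb R^n\setminus\overline\Omega$; hence its far-field pattern is trivially zero and $\boldsymbol\varphi=-\omega^2V\EM w$ is a \emph{non-radiating} source, with the associated displacement $\EM u=\EM w-\EM v\in H^2(\Omega)^n$ as required by the hypotheses behind Corollary~\ref{cor:non-rad}.

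Next I would verify that $\boldsymbol\varphi$ meets the regularity and a-priori hypotheses of Corollary~\ref{cor:non-rad}, i.e. $\boldsymbol\varphi\in C^\alpha(\overline\Omega)^n\cap H^1(\Omega)^n$ with $\max(\|\boldsymbol\varphi\|_{C^\alpha},\|\boldsymbol\varphi\|_{H^1})$ bounded by an a-priori constant $\mathcal G$. The H\"older bound is immediate from the product rule in $C^\alpha$, giving $\|\boldsymbol\varphi\|_{C^\alpha}\le\omega^2\|V\|_{C^\alpha}\|\EM w\|_{C^\alpha}$. For the $H^1$ bound one writes $\nabla\boldsymbol\varphi=-\omega^2((\nabla V)\EM w+V\nabla\EM w)$ and uses the embedding $C^\alpha\hookrightarrow L^\infty$ to control the two cross terms, obtaining $\|\boldsymbol\varphi\|_{H^1}\le C\omega^2(\|V\|_{H^1}\|\EM w\|_{C^\alpha}+\|V\|_{C^\alpha}\|\EM w\|_{H^1})$. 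Together with $\max(1,\|V\|_{C^\alpha},\|V\|_{H^1})<\Xi$ and $\max(\|\EM w\|_{C^\alpha},\|\EM w\|_{H^1})\le1$, this shows $\mathcal G=\mathcal G(\omega,\Xi)$ depends only on a-priori constants, so the constant $\mathcal R$ produced by Corollary~\ref{cor:non-rad} is itself a-priori.

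Then I would apply Corollary~\ref{cor:non-rad} to $\boldsymbol\varphi$ at the admissible $K$-curvature point $\EM q$, yielding $|\boldsymbol\varphi(\EM q)|\le\mathcal R(\ln K)^{(n+1)/2}K^{-\frac12\min(\alpha,\varsigma)}$ for $n=2$ and the analogous bound with the extra exponent $+\tfrac16$ for $n=3$. Since $\boldsymbol\varphi(\EM q)=-\omega^2V(\EM q)\EM w(\EM q)$ and $|V(\EM q)|>c>0$, dividing by $\omega^2|V(\EM q)|$ gives $|\EM w(\EM q)|\le(\omega^2c)^{-1}|\boldsymbol\varphi(\EM q)|$, which is precisely \eqref{eq:trans1} and \eqref{eq:trans2} with $\mathcal C=\mathcal R/(\omega^2c)$ depending only on the a-priori constants. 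The only genuinely technical point I expect is the $H^1$ product estimate for $V\EM w$: since $H^1$ is not an algebra for $n=2,3$, the main care lies in invoking $C^\alpha\hookrightarrow L^\infty$ to bound both terms in $\nabla(V\EM w)$, thereby keeping $\mathcal G$—and hence $\mathcal C$—independent of the curvature $K$.
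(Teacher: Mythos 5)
Your proposal follows essentially the same route as the paper's proof: set $\mathbf u=\mathbf w-\mathbf v$, observe it solves the source problem with $\mathbf f=-\omega^2 V\mathbf w$ and vanishing Cauchy data on $\partial\Omega$ (hence non-radiating), apply Corollary~\ref{cor:non-rad}, and divide by $\omega^2|V(\mathbf q)|>\omega^2 c$. The only difference is that you spell out the $C^\alpha$ and $H^1$ product estimates for $V\mathbf w$ that the paper dismisses as ``clear,'' which is a welcome but inessential elaboration.
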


\begin{rem}\notag
It follows directly from Theorem \ref{thm:trans2} that the transmission eigenfunction $\mathbf w$ almost vanishes at the admissible $K$-curvature point when the curvature $K$ is sufficiently large.
\end{rem}


\begin{proof}[Proof of Theorem \ref{thm:trans2}.]
Denote $\mathbf u=\mathbf w-\mathbf v$, then the transmission eigenvalue system \eqref{eq:trans} can be reduced as:
\begin{equation}\notag
\begin{cases}
\mathcal L \mathbf u+\omega^2\mathbf u=-\omega^2V\mathbf w\hspace*{0.65cm}\mbox{in}\ \Omega,\\
T_\nu\mathbf u=\mathbf u=\mathbf 0\hspace*{2cm}\mbox{on}\ \partial \Omega.
\end{cases}
\end{equation}
Let $\mathbf f=-\omega^2V\mathbf w$, it is clear that $\EM f\in C^{\alpha}(\overline{\Omega})^n\cap H^1(\Omega)^n$ due to $\EM w\in C^{\alpha}(\overline{\Omega})^n\cap H^1(\Omega)^n$ and  $V\in C^{\alpha}(\overline{\Omega})^n\cap H^1(\Omega)^n$.
By Corollary  \ref{cor:non-rad}, it follows that $\vert \mathbf f(\mathbf q)\vert \leq \mathcal C(\ln K)^\frac{1+3}{2}K^{-\frac{1}{2}\min(\alpha,\varsigma)}$ for  $n=2$  and  $\vert \mathbf f(\mathbf q)\vert \leq \mathcal C(\ln K)^\frac{n+1}{2}K^{-\frac{1}{2}\min(\alpha,\varsigma)+\frac{1}{6}},\ \min(\alpha,\varsigma)\in (1/3,1)$ for $n=3$.
Consequently, under the assumption that $|V(\mathbf{q})|>c>0$, we obtain \eqref{eq:trans1} and \eqref{eq:trans2}.

The proof is complete.
\end{proof}

\section{Uniqueness results for inverse source and medium scattering problems}\label{sec:inverse}
In this section, we establish the local and global uniqueness results for the inverse source and medium scattering problems using  a single far-field pattern measurement, where the scatterers have either a small diameter (in terms of the wavelength) or high-curvature boundary points.
\subsection{Inverse problems for the source scattering problem}
 In this subsection, we establish the local and global uniqueness results for determining the small support of the source, as well as the local uniqueness results for identifying scatterers with high-curvature boundary points. 
 Theorem \ref{thm:in2loc} provides a local uniqueness result, demonstrating that if the sources $\mathbf{f}_{i} = \chi_{\Omega_i} \boldsymbol{\varphi}_i , (i=1,2)$ associated with two scatterers $\Omega_i$ radiate the same far-field patterns, then the difference of two scatterers $\Omega_1 \Delta \Omega_2 := (\Omega_1 \setminus {\Omega_2}) \cup (\Omega_2 \setminus {\Omega_1})$ cannot contain a very small component satisfying \eqref{eq:smallo}.
 
\begin{thm}\label{thm:in2loc}
Let $\Omega_{i} \subset \mathbb{R}^n\ (i=1,2,\  n=2,3)$ be two bounded Lipschitz domains, both with connected complements $\mathbb{R}^n \setminus \overline{\Omega_i}$, and the corresponding sources $\boldsymbol{\varphi}_i \in C^\alpha(\overline{\Omega}_i)^n$, where $\alpha \in (0,1]$ for $n=2$ and $\alpha \in (0,1/2]$ for $n=3$, respectively. Let $\mathbf u_i\in H^2_{loc}(\mathbb R^n)^n$ be outgoing solutions to the source problems
\begin{align}\notag
\mathcal L\mathbf u_i+\omega^2\mathbf u_i&=\chi_{\Omega_i}\boldsymbol\varphi_i\ (i=1,2).\notag
\end{align}
If $\mathbf u_1^{\infty}=\mathbf u_2^{\infty}$, then the difference $\Omega_1\Delta\Omega_2$ cannot contain any  component $\Omega_0$ satisfying  
\begin{equation}\label{eq:smallo}
\frac{\sup \|\boldsymbol \varphi\|_{L^2(\partial \Omega_0)}}{{\omega^{-\alpha}}[\boldsymbol \varphi]_{\alpha,\Omega_0}+\|\boldsymbol \varphi\|_{L^\infty(\Omega_0)}}>C\varepsilon^{\alpha}(1+(1+\varepsilon)\varepsilon^{n/2}),
\end{equation}
and there exists an unbounded path $\gamma\subset \mathbb R^n\setminus \overline{\Omega_1\cap \Omega_2}$ connecting $\Omega_0$ to infinity,  
where $\varepsilon=d(\Omega_0)\omega$, $\boldsymbol \varphi=\boldsymbol \varphi_1|_{\Omega_0}$ for $\Omega_0\subset (\Omega_1\setminus \overline{\Omega}_2)$ or $\boldsymbol \varphi=\boldsymbol \varphi_2|_{\Omega_0}$ for $\Omega_0\subset (\Omega_2\setminus \overline{\Omega}_1)$.
\end{thm}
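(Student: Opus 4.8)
The plan is to argue by contradiction and to reduce the assertion to the non-radiating estimate for a single source already established in Corollary~\ref{cor:non-rads}. Suppose that $\Omega_1\Delta\Omega_2$ contains a component $\Omega_0$ satisfying \eqref{eq:smallo} together with the stated path condition; without loss of generality assume $\Omega_0\subset\Omega_1\setminus\overline{\Omega}_2$ and write $\boldsymbol\varphi=\boldsymbol\varphi_1|_{\Omega_0}$. Set $\mathbf u=\mathbf u_1-\mathbf u_2$, which solves $\mathcal L\mathbf u+\omega^2\mathbf u=\chi_{\Omega_1}\boldsymbol\varphi_1-\chi_{\Omega_2}\boldsymbol\varphi_2$ in $\mathbb R^n$ and is outgoing. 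Since $\mathbf u_1^{\infty}=\mathbf u_2^{\infty}$, Rellich's lemma yields $\mathbf u=\mathbf 0$ on the unbounded connected component $\mathcal U$ of $\mathbb R^n\setminus\overline{\Omega_1\cup\Omega_2}$, where the right-hand side vanishes so that $\mathbf u$ solves the homogeneous Lam\'e system.

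The decisive step is to transfer this vanishing up to $\partial\Omega_0$, i.e.\ to show that $\mathbf u$ has vanishing Cauchy data $\mathbf u=T_\nu\mathbf u=\mathbf 0$ on $\partial\Omega_0$. Here the geometric hypotheses on $\Omega_0$ enter: because $\Omega_0\subset\Omega_1\setminus\overline{\Omega}_2$, the field $\mathbf u_2$ solves the homogeneous system in a full neighborhood of $\overline{\Omega_0}$ (its source sits in $\Omega_2$, which is disjoint from $\overline{\Omega_0}$), so $\mathbf u=\mathbf u_1-\mathbf u_2$ satisfies the homogeneous equation on the exterior side of every portion of $\partial\Omega_0$ lying on $\partial\Omega_1$. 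The path $\gamma\subset\mathbb R^n\setminus\overline{\Omega_1\cap\Omega_2}$ joining $\Omega_0$ to infinity guarantees that this exterior side is linked to $\mathcal U$ within the source-free region; I would then invoke weak unique continuation for the (strongly elliptic) Lam\'e operator along this connection to propagate $\mathbf u\equiv\mathbf 0$ from $\mathcal U$ to an open neighborhood of $\partial\Omega_0$ on its exterior side. Since $\mathbf u\in H^2_{loc}(\mathbb R^n)^n$, taking traces from outside then forces $\mathbf u=\mathbf 0$ and $T_\nu\mathbf u=\mathbf 0$ on $\partial\Omega_0$.

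Granting the vanishing Cauchy data, I would let $\tilde{\mathbf u}$ be $\mathbf u|_{\Omega_0}$ extended by $\mathbf 0$ to $\mathbb R^n\setminus\overline{\Omega_0}$. Matching of the traces across $\partial\Omega_0$ ensures $\tilde{\mathbf u}\in H^2_{loc}(\mathbb R^n)^n$, and by construction it is the unique outgoing solution of $\mathcal L\tilde{\mathbf u}+\omega^2\tilde{\mathbf u}=\chi_{\Omega_0}\boldsymbol\varphi$; being compactly supported, it has zero far field, so the source $(\Omega_0;\boldsymbol\varphi)$ is non-radiating. Corollary~\ref{cor:non-rads}, applied to the single component $\Omega_0$ with $\varepsilon=d(\Omega_0)\omega$, then yields the bound \eqref{eq:non-small} with $\Omega_c=\Omega_0$, which directly contradicts the strict inequality \eqref{eq:smallo}. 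The case $\Omega_0\subset\Omega_2\setminus\overline{\Omega}_1$ is handled identically after interchanging the roles of $\mathbf u_1$ and $\mathbf u_2$.

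I expect the middle step to be the principal obstacle: rigorously exploiting the path condition to run unique continuation all the way up to $\partial\Omega_0$. One must verify that the part of $\partial\Omega_0$ not directly facing $\mathcal U$ (in particular any part shared with $\partial\Omega_2$, across which one enters $\Omega_1\cap\Omega_2$) still acquires vanishing Cauchy data; this requires combining the connectedness supplied by $\gamma$ with the separate source-freeness of $\mathbf u_2$ near $\overline{\Omega_0}$ and of $\mathbf u_1$ outside $\overline{\Omega_1}$, and then checking that the zero-extension $\tilde{\mathbf u}$ genuinely belongs to $H^2_{loc}$ so that Corollary~\ref{cor:non-rads} is legitimately applicable. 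By comparison, the final bookkeeping that matches the functional in \eqref{eq:smallo} to the non-radiating estimate \eqref{eq:non-small} is routine.
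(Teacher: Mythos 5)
Your proposal is correct and follows essentially the same route as the paper: argue by contradiction, use Rellich's lemma on the difference field $\mathbf u_1-\mathbf u_2$ (whose source restricted to $\Omega_0$ is $\pm\chi_{\Omega_0}\boldsymbol\varphi$), and reduce to the non-radiating bound of Corollary~\ref{cor:non-rads} applied to the component $\Omega_0$, contradicting \eqref{eq:smallo}. The only difference is one of explicitness: the step you flag as the principal obstacle --- propagating the vanishing of the difference field from infinity to $\partial\Omega_0$ via the path condition so as to obtain zero Cauchy data there --- is exactly the step the paper absorbs implicitly into its claim that $\Omega_0$ is a component of the combined source support and into the multi-component argument of Theorem~\ref{thm:small}, so your treatment is, if anything, the more careful one.
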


\begin{proof}
We prove this theorem by contradiction and assume that there exist a component $\Omega_0\subset \Omega_1\Delta\Omega_2$. Without loss of generality, suppose that $\Omega_0\subset (\Omega_1\setminus \overline{ \Omega}_2)$.  It is clear that there is a bounded Lipschitz domain $G\subset \mathbb R^n$ such that $(\Omega_1\cup\Omega_2)\subset G$ and  its complement is connected. Then one has $\Omega_0$ is also a component of $G$. Denote $\mathbf w=\mathbf u_2-\mathbf u_{1}$, then one has
$$ 
\mathcal L \mathbf w+\omega^2\mathbf w=\chi_{G}(\chi_{\Omega_2}\boldsymbol \varphi_2-\chi_{\Omega_1}\boldsymbol \varphi _1),
$$
where $\mathbf w$ satisfies the Kupradze radiation condition and $\mathbf w^\infty=\mathbf 0.$ It is easily seen that the  source term above is $-\chi_{\Omega_0}\boldsymbol \varphi_1$ on $\Omega_0$. By virtue of Corollary \ref{cor:non-rads}, we have
$$\frac{\sup \|\boldsymbol \varphi_1\|_{L^2(\partial \Omega_0)^n}}{\omega^{-\alpha}[\boldsymbol \varphi_1]_{\alpha,\Omega_0}+\|\boldsymbol \varphi_1\|_{L^\infty(\Omega_0)^n}}\leq C\varepsilon^{\alpha}(1+(1+\varepsilon)\varepsilon^{n/2}),$$
which contradicts \eqref{eq:smallo}.

 The proof is complete.
\end{proof}

By virtue of Theorem \ref{thm:in2loc}, we can directly state the following corollary, which shows that for a fixed operating frequency, if the supports of two different sources are small and disjoint, they cannot produce the same far-field pattern.
\begin{cor}\label{cor:inv1}
Let $\Omega_i\ (i=1,2)$ be two bounded Lipschitz domains in $\mathbb R^n$ $(n=2,3)$ with connected complements $\mathbb R^n\setminus \overline{\Omega}_i$. Consider the source problems: $\mathcal L \EM u_i+\omega^2\EM u_i=\chi_{\Omega_i}\boldsymbol{\varphi}_i$ with $\boldsymbol \varphi_i\in C^\alpha(\overline{\Omega}_i)^n$, where $\alpha\in (0,1]$ for $n=2$ and $\alpha\in(0,1/2]$ for $n=3$.  Let $\varepsilon_{min}$ be defined as follows:
\begin{align}
&\min\left(\frac{\sup_{\partial \Omega_1} |\boldsymbol \varphi_1|}{\omega^{-\alpha}[\boldsymbol \varphi_1]_{\alpha,\Omega_1}+\|\boldsymbol \varphi_1\|_{L^\infty(\Omega_1)^n}}, \frac{\sup_{\partial \Omega_2}|\boldsymbol \varphi_2|}{\omega^{-\alpha}[\boldsymbol \varphi_2]_{\alpha,\Omega_2}+\|\boldsymbol \varphi_2\|_{L^\infty(\Omega_2)^n}}\right)\notag\\
&\quad=C\varepsilon_{min}^{1/2}(1+(1+\varepsilon_{min})\varepsilon_{min}^{n/2})\label{eq:e0}
\end{align}
or smaller. If $\omega d(\Omega_i)<\varepsilon_{min} \ (i=1,2)$ and $\overline{\Omega}_1 \cap \overline{\Omega}_2=\emptyset$, then $\mathbf u_1^\infty\not=\mathbf u_2^\infty.$
\end{cor}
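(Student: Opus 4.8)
The plan is to obtain Corollary~\ref{cor:inv1} as an immediate consequence of the local uniqueness result Theorem~\ref{thm:in2loc}, arguing by contradiction. I would begin by assuming that the two sources radiate the same pattern, $\mathbf{u}_1^\infty = \mathbf{u}_2^\infty$, and set out to contradict the smallness hypothesis $\omega\, d(\Omega_i) < \varepsilon_{min}$. The whole point is that this smallness, through the defining relation \eqref{eq:e0}, is precisely calibrated to force condition \eqref{eq:smallo} of Theorem~\ref{thm:in2loc} to hold on some component of $\Omega_1 \Delta \Omega_2$.

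First I would exploit the disjointness assumption $\overline{\Omega}_1 \cap \overline{\Omega}_2 = \emptyset$. This gives $\Omega_1 \cap \Omega_2 = \emptyset$, so the symmetric difference collapses to $\Omega_1 \Delta \Omega_2 = \Omega_1 \cup \Omega_2$; moreover $\mathbb{R}^n \setminus \overline{\Omega_1 \cap \Omega_2} = \mathbb{R}^n$, whence the unbounded-path requirement in Theorem~\ref{thm:in2loc} is automatically satisfied for any bounded component, since every bounded set is joined to infinity by a path in $\mathbb{R}^n$. I would then select a connected component $\Omega_0$ of $\Omega_1$; by disjointness $\Omega_0 \subset \Omega_1 \setminus \overline{\Omega}_2$, and $\Omega_0 \subset \Omega_1$ yields $\omega\, d(\Omega_0) \le \omega\, d(\Omega_1) < \varepsilon_{min}$.

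The crux is to verify that $\Omega_0$ meets the quantitative threshold \eqref{eq:smallo}. I would use that the right-hand side $g(\varepsilon) := C\varepsilon^{\alpha}\bigl(1+(1+\varepsilon)\varepsilon^{n/2}\bigr)$ is strictly increasing in $\varepsilon > 0$, together with \eqref{eq:e0}, which sets $g(\varepsilon_{min}) = \min(R_1, R_2)$ where $R_i := \sup_{\partial\Omega_i}|\boldsymbol\varphi_i| \big/ \bigl(\omega^{-\alpha}[\boldsymbol\varphi_i]_{\alpha,\Omega_i}+\|\boldsymbol\varphi_i\|_{L^\infty(\Omega_i)}\bigr)$. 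Setting $\varepsilon_0 = \omega\, d(\Omega_0) < \varepsilon_{min}$ and invoking monotonicity gives $g(\varepsilon_0) < g(\varepsilon_{min}) = \min(R_1,R_2) \le R_1$, so the intensity ratio strictly exceeds the threshold evaluated at $\varepsilon_0$, which is exactly \eqref{eq:smallo} for $\boldsymbol\varphi = \boldsymbol\varphi_1|_{\Omega_0}$. Thus $\Omega_1 \Delta \Omega_2$ contains a component on which the conclusion of Theorem~\ref{thm:in2loc} fails, a contradiction; therefore $\mathbf{u}_1^\infty \neq \mathbf{u}_2^\infty$.

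The main obstacle I anticipate is purely the bookkeeping needed to transfer the whole-support ratio in \eqref{eq:e0} to the per-component ratio in \eqref{eq:smallo}. Concretely, one must reconcile the slightly different numerators appearing in the two conditions (the $L^\infty$-type supremum over $\partial\Omega_i$ versus the boundary quantity on the component $\Omega_0$) and confirm that restricting the supremum and the seminorm to the smaller set $\Omega_0$ preserves the inequality in the direction required. Once this normalization is checked, the argument reduces entirely to the elementary monotonicity of $g$, so I expect no genuine analytic difficulty beyond careful matching of the definitions; the case $\Omega_0 \subset \Omega_2 \setminus \overline{\Omega}_1$ is handled symmetrically with $R_2$ in place of $R_1$.
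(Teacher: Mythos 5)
Your overall route is the same as the paper's: argue by contradiction and feed the disjointness plus the smallness of $\omega d(\Omega_i)$ into the non-radiating bound of Corollary \ref{cor:non-rads} (the paper phrases this as ``a similar argument to the proof of Theorem \ref{thm:in2loc}'' rather than citing Theorem \ref{thm:in2loc} as a black box, but the substance is identical). The monotonicity of $\varepsilon\mapsto C\varepsilon^{\alpha}(1+(1+\varepsilon)\varepsilon^{n/2})$ and the trivial verification of the path condition under $\Omega_1\cap\Omega_2=\emptyset$ are fine.

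The one step you flagged as ``bookkeeping'' is, however, a genuine issue as written, and the answer to your own question is: restricting to a component does \emph{not} automatically preserve the inequality in the required direction. Passing from $\Omega_1$ to a component $\Omega_0$ shrinks the denominator $\omega^{-\alpha}[\boldsymbol\varphi_1]_{\alpha,\cdot}+\|\boldsymbol\varphi_1\|_{L^\infty(\cdot)}$ (which helps), but it also shrinks the numerator $\sup_{\partial\cdot}|\boldsymbol\varphi_1|$ (which hurts); for an arbitrarily chosen component the ratio can be strictly smaller than $R_1$ (take $\boldsymbol\varphi_1$ large on one component and tiny on another), and then \eqref{eq:smallo} need not hold on that component, so your claimed inequality ``component ratio $\ge R_1$'' fails in general. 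The fix is to choose $\Omega_0$ to be the component on which $\sup_{\partial\Omega_1}|\boldsymbol\varphi_1|$ is attained (or attained up to an arbitrarily small loss, which suffices because $g(\varepsilon_0)<g(\varepsilon_{min})\le R_1$ is strict); with that choice the numerator is unchanged, the denominator only decreases, and your chain closes. If the $\Omega_i$ are read as connected this issue disappears entirely. One further cosmetic remark: \eqref{eq:e0} carries the exponent $\varepsilon_{min}^{1/2}$ while \eqref{eq:smallo} and Corollary \ref{cor:non-rads} carry $\varepsilon^{\alpha}$; this mismatch is in the paper itself, and your silent normalization to $\alpha$ is the consistent reading.
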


\begin{proof}
By contradiction, assume that $\EM u_1^\infty=\EM u_2^\infty$. Since $\overline{\Omega}_1 \cap \overline{\Omega}_2=\emptyset$, a similar argument to that in the proof of Theorem \ref{thm:in2loc} allows us to derive that 
$$\frac{\sup_{\partial \Omega_i} |\boldsymbol \varphi_i|}{\omega^{-\alpha}[\boldsymbol \varphi_i]_{\alpha,\Omega_i}+\|\boldsymbol \varphi_i\|_{L^\infty(\Omega_i)^n}}\leq C\varepsilon_{min}^{1/2}(1+(1+\varepsilon_{min})\varepsilon_{min}^{n/2})\ \mbox{for}\ i=1,2$$
which contradicts \eqref{eq:e0}. 

The proof is complete.
\end{proof}

Corollary \ref{cor:inv3} establishes a  global unique identifiability result for inverse scattering problems. Specifically, if two source supports, each comprising a finite number of mutually disjoint small scatterers, generate identical far-field patterns, then they must have the same number of components. Moreover, after re-indexing the components, the pairwise intersections of corresponding components are non-empty. Prior to this, we define the admissible class $\mathcal{A}$.


\begin{defn}\label{def:ad1}
	Let $\Omega\subset \mathbb R^n$ $(n=2,3)$ be a bounded Lipschitz domain with a connected complement $\mathbb R^n\setminus {\overline{\Omega}}$. Let $\EM u\in H_{loc}^2(\mathbb R^n)^n$ be the unique outgoing solution to 
	$\mathcal L \EM u+\omega^2\EM u=\EM f$, with source $ \EM f=\chi_{\Omega}\boldsymbol{\varphi}.$
		 Suppose that $\Omega$ be a collection of the mutually disjoint scatterers:
	$$\Omega=\bigcup_{j=1}^{N}\Omega^j\ \mbox{and}\   \overline{\Omega}^i\cap\overline{\Omega}^j=\emptyset\ \mbox{for}\ i\not =j\ (i,j=1,\dots,N\in \mathbb R^n),$$
	 then   $(\Omega;\EM f)$ belongs to an admissible class $\mathcal A$ if the following admissible conditions fulfilled:
	
	\begin{itemize}
		\item [(a)]  $\boldsymbol \varphi\in C^\alpha(\overline{\Omega})^n$ with $\alpha\in (0,1]$ for $n=2$ and $\alpha\in (0,1/2]$ for $n=3$.
		\item [(b)] The diameter of $\Omega$ is denoted by $\varepsilon = d(\Omega) \omega$ and satisfies
		\begin{equation}\notag
\frac{\sup \|\boldsymbol \varphi\|_{L^2(\partial \Omega )}}{{\omega^{-\alpha}}[\boldsymbol \varphi]_{\alpha,\Omega }+\|\boldsymbol \varphi\|_{L^\infty(\Omega )}}>C\varepsilon^{\alpha}(1+(1+\varepsilon)\varepsilon^{n/2}),
   \end{equation}
   where $C$ is a positive constant depending on $n,\lambda,\mu, \alpha.$
	\end{itemize}
\end{defn}

\begin{cor}\label{cor:inv3}
Let $(\Omega_i;\EM f_i)$ be two sources belonging to the admissible class $\mathcal A$ in $\mathbb R^n\ (n=2,3)$, where 
	$\Omega_1=\bigcup_{j=1}^N \Omega_1^{j}, \ \Omega_2=\bigcup_{\ell=1}^M \Omega_2^{\ell}$. Let $\varepsilon_{min}$ be given in \eqref{eq:e0}.  
Suppose that  each $\Omega_1^j$ and $\Omega_2^\ell$ has a diameter of at most $\varepsilon_{min}\omega^{-1}$ 
and  $d(\Omega_1^{j_1},\Omega_1^{j_2})>2\varepsilon_{min}\omega^{-1}$ for $j_1\not=j_2$, $d(\Omega_2^{\ell_1},\Omega_2^{\ell_2})>2\varepsilon_{min}\omega^{-1}$ for $\ell_1\not=\ell_2$.

Let $\EM u_i$ and $\EM u^\infty_i$ be the displacement fields and the far-field patterns corresponding to $(\Omega_i;\EM f_i)$, $i=1,2$, respectively. If $\EM u^\infty_1=\EM u^\infty_2$, then it follows that  $M=N$, and under re-indexing, $\overline{\Omega}_1^{j}\cap \overline{\Omega}_2^j\not=\emptyset$, $(j=1,\dots,N)$. 
\end{cor}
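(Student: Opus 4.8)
The plan is to argue by contradiction from $\mathbf u_1^\infty=\mathbf u_2^\infty$ and to reduce the whole statement to a matching problem for the bipartite intersection graph whose left vertices are the components $\{\Omega_1^j\}_{j=1}^N$, whose right vertices are $\{\Omega_2^\ell\}_{\ell=1}^M$, and in which $\Omega_1^j$ and $\Omega_2^\ell$ are joined by an edge precisely when $\overline{\Omega}_1^j\cap\overline{\Omega}_2^\ell\neq\emptyset$. Everything follows from two claims about this graph: (I) every vertex has degree at least one, and (II) every vertex has degree at most one. Granting both, each component of one family is joined to exactly one component of the other, so the graph is a perfect matching; this forces $M=N$, and re-indexing $\Omega_2$ along the matching gives $\overline{\Omega}_1^j\cap\overline{\Omega}_2^j\neq\emptyset$ for every $j$.

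For claim (I), I would suppose that some component, say $\Omega_1^{j_0}$, satisfies $\overline{\Omega}_1^{j_0}\cap\overline{\Omega}_2=\emptyset$. Then $\Omega_1^{j_0}\subset\Omega_1\setminus\overline{\Omega}_2\subset\Omega_1\Delta\Omega_2$, and the separation hypothesis $d(\Omega_1^{j_1},\Omega_1^{j_2})>2\varepsilon_{\min}\omega^{-1}$ together with $\overline{\Omega}_1^{j_0}\cap\overline{\Omega}_2=\emptyset$ shows that $\Omega_1^{j_0}$ sits at positive distance from the rest of the configuration and is therefore a genuine connected component of $\Omega_1\Delta\Omega_2$. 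Since $\overline{\Omega_1\cap\Omega_2}\subseteq\overline{\Omega}_2$ is disjoint from $\overline{\Omega}_1^{j_0}$ and the exterior of the bounded configuration is connected, one can join $\Omega_1^{j_0}$ to infinity by an unbounded path $\gamma\subset\mathbb R^n\setminus\overline{\Omega_1\cap\Omega_2}$. Because $d(\Omega_1^{j_0})\omega\le\varepsilon_{\min}$ and $(\Omega_1;\mathbf f_1)\in\mathcal A$, the definition of $\varepsilon_{\min}$ in \eqref{eq:e0} and the monotonicity of the right-hand side of \eqref{eq:smallo} in $\varepsilon$ imply that the restricted source $\boldsymbol\varphi_1|_{\Omega_1^{j_0}}$ satisfies the smallness condition \eqref{eq:smallo}. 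This is exactly the configuration excluded by Theorem \ref{thm:in2loc}, a contradiction; the symmetric argument applied to any $\Omega_2^{\ell_0}$ completes claim (I).

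For claim (II) I would use a purely metric argument. If one component $\Omega_2^\ell$ met two distinct components $\Omega_1^{j_1}$ and $\Omega_1^{j_2}$, then choosing $p_i\in\overline{\Omega}_2^\ell\cap\overline{\Omega}_1^{j_i}$ yields $|p_1-p_2|\le d(\Omega_2^\ell)\le\varepsilon_{\min}\omega^{-1}$, whereas the separation hypothesis forces $|p_1-p_2|\ge d(\Omega_1^{j_1},\Omega_1^{j_2})>2\varepsilon_{\min}\omega^{-1}$, which is impossible. Hence each $\Omega_2^\ell$ meets at most one $\Omega_1^j$, and symmetrically each $\Omega_1^j$ meets at most one $\Omega_2^\ell$. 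Combining (I) and (II) gives the perfect matching and the stated conclusion.

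The main obstacle I expect is the verification, inside claim (I), that an unmatched small component really does satisfy \eqref{eq:smallo}: one must pass from the admissibility condition imposed on the whole source $(\Omega_1;\mathbf f_1)$ and the threshold $\varepsilon_{\min}$ down to the component-level quotient $\sup_{\partial\Omega_1^{j_0}}|\boldsymbol\varphi_1|$ divided by $\omega^{-\alpha}[\boldsymbol\varphi_1]_{\alpha,\Omega_1^{j_0}}+\|\boldsymbol\varphi_1\|_{L^\infty(\Omega_1^{j_0})}$, tracking how the Hölder seminorm and the boundary supremum behave under restriction to a single component. A secondary, more routine point is the rigorous construction of the unbounded path avoiding $\overline{\Omega_1\cap\Omega_2}$, which rests on the connectedness of the complements $\mathbb R^n\setminus\overline{\Omega}_i$ and the positive separation of $\Omega_1^{j_0}$ from the remaining scatterers.
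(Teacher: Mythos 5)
Your proposal is correct and follows essentially the same route as the paper: a contradiction argument showing every component of one scatterer must meet the closure of the other (the paper applies Corollary \ref{cor:non-rads} directly to the difference field $\mathbf u_1-\mathbf u_2$ on an unmatched component, while you route this through Theorem \ref{thm:in2loc}, which is itself proved by the same corollary), followed by the metric separation argument to show each component meets at most one component of the other scatterer, yielding the bijection $M=N$. The component-level admissibility issue you flag as the ``main obstacle'' is real, but the paper's own Definition \ref{def:ad1}(b) and proof gloss over exactly the same point, so your treatment is no less complete than the original.
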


\begin{proof}
	We will prove this theorem by contradiction. Without loss of generality, assume that there is a component $\Omega_1^{j_0}\subset \Omega_1$ such that $\overline{\Omega}_1^{j_0}\cap\overline{\Omega}_2=\emptyset.$ Denote $\tilde {\EM u}=\EM u_1-\EM u_2$, then we have   $\tilde {\EM u}^\infty =\EM 0$ and 
	$$\mathcal L\tilde {\EM u}+\omega^2\tilde {\EM u}=\EM f_1\quad \mbox{in}\quad  \Omega_1^{j_0}.$$
By virtue of Corollary \ref{cor:non-rads}, one has
       $$\frac{\sup \|\boldsymbol \varphi\|_{L^2(\partial \Omega_1^{j_0})}}{{\omega^{-\alpha}}[\boldsymbol \varphi]_{\alpha,\Omega_1^{j_0}}+\|\boldsymbol \varphi\|_{L^\infty(\Omega_1^{j_0})}}\leq C\varepsilon_{min}^{\alpha}(1+(1+\varepsilon_{min})\varepsilon_{min}^{n/2}).$$
which contradicts the admissible condition (b) in Definition \ref{def:ad1}. Therefore, we obtain that  $\overline{\Omega}_1^{j_0}\cap\overline{\Omega}_2\not=\emptyset$. Furthermore, since the distance between two components is twice as large as their diameters, it is clear that a component of $\Omega_1$ can interact with at most one component of $\Omega_2$. Therefore, we conclude that $M = N$.

The proof is complete.
\end{proof}

Next, we shall establish the local uniqueness results for determining the support of a source with admissible $K$-curvature points using a single far-field pattern measurement. 
Theorem \ref{thm:in3} demonstrates that if two admissible sources produce the same far-field pattern, then the difference between their supports cannot contain a high-curvature point connected to infinity. We first define the admissible class $\mathcal B$ for the support of a source that possesses admissible $K$-curvature points.

\begin{defn}\label{def:ad2}
Let $\Omega\in \mathbb R^n\ (n=2,3)$ be a bounded Lipschitz domain with a connected complement. Consider the source problem $\mathcal L\EM u+\omega^2\EM u=\EM f$,  $\EM f=\chi_{\Omega}\boldsymbol{\varphi}$. 	Let $L,M, n,\lambda,\mu,\omega,M,\Xi_M,\alpha$ be the positive a-priori constants. Suppose that $\Omega$ has an admissible $K$-curvature point with parameters $K,L, M, \varsigma$. We say that a source $(\Omega;\EM f)$ belongs to an admissible class $\mathcal B$ if the following conditions are satisfied:
\begin{itemize}
	\item [(a)] $\boldsymbol{\varphi}\in C^\alpha(\Omega)$ and $\max(\| \boldsymbol \varphi \|_{C^\alpha(\overline{\Omega })},\| \boldsymbol \varphi \|_{H^1( {\Omega })})<\Xi_M$, where $\alpha\in (0,1)$ for $n=2$ and $\alpha\in(1/3,1)$ for $n=3$. 
	\item[(b)] $\boldsymbol{\varphi}$ has a lower bound at the admissible $K$-curvature point associated with  $K$:
	$$ \vert \boldsymbol{ \varphi}(\mathbf q)\vert \geq \mathcal R(\ln K)^{\frac{n+1}{2}}K^{-\frac{1}{2}\min(\alpha,\varsigma)}\ \mbox{for}\ n=2,$$
or
$$\vert \boldsymbol {\varphi}(\mathbf q)\vert \geq\mathcal R(\ln K)^{\frac{n+1}{2}}K^{-\frac{1}{2}\min(\alpha,\varsigma)+\frac{1}{6}},\ \varsigma \in(1/3,1)\ \mbox{for}\ n=3,$$
where $\mathcal R$ is a positive constant depending on   a-priori constants. 
\end{itemize}
\end{defn}

\begin{thm}\label{thm:in3}
Let $(\Omega_i;\EM f_i)$ be two sources belonging to the admissible class $\mathcal B$ in $\mathbb R^n\ (n=2,3)$ with $\EM f_i=\chi_{\Omega_i}\boldsymbol{\varphi}_i$ $(i=1,2)$. Let 
 $\mathbf u_i^\infty$ be the far-field patterns corresponding to $\mathbf u_i$ and $M_\epsilon>1$ be an a-priori constant. 
 If $\mathbf u_1^\infty=\mathbf u_2^\infty$, then the difference $\Omega_1\Delta\Omega_2$  cannot have  an admissible $K$-curvature point $\mathbf q$ with the parameters $K,L,M,\varsigma$ such that neither $d(\mathbf q,\Omega_1)>M_\epsilon/K$ for $\mathbf q\in \partial \Omega_2\setminus \partial \Omega_1$ nor $d(\mathbf q,\Omega_2)>M_\epsilon/K$ for $\mathbf q\in \partial \Omega_1\setminus \partial \Omega_2$, and there exists an unbounded path $\gamma\subset \mathbb R^n\setminus \overline{\Omega_1\cup\Omega_2}$ connecting $\EM q$ to infinity. 
\end{thm}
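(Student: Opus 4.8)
The plan is to argue by contradiction and reduce the assertion to the non‑radiating characterization at admissible $K$‑curvature points proved in Corollary~\ref{cor:non-rad}, in direct parallel with the way Theorem~\ref{thm:in2loc} reduces the small‑component case to Corollary~\ref{cor:non-rads}. I would suppose, contrary to the claim, that $\Omega_1\Delta\Omega_2$ contains an admissible $K$‑curvature point $\mathbf q$ with parameters $K,L,M,\varsigma$ for which the distance condition and the unbounded path $\gamma$ both hold, and without loss of generality take $\mathbf q\in\partial\Omega_1\setminus\partial\Omega_2$, so that $d(\mathbf q,\Omega_2)>M_\epsilon/K$ (the symmetric case follows by interchanging the two indices). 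Then I would set $\mathbf w=\mathbf u_1-\mathbf u_2$, which is an outgoing solution of $\mathcal L\mathbf w+\omega^2\mathbf w=\chi_{\Omega_1}\boldsymbol\varphi_1-\chi_{\Omega_2}\boldsymbol\varphi_2$ in $\mathbb R^n$; since $\mathbf u_1^\infty=\mathbf u_2^\infty$, one has $\mathbf w^\infty=\mathbf 0$, i.e. $\mathbf w$ is non‑radiating.

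The crucial step is to localize the analysis to the boundary neighbourhood $\Omega_{\rho,b}$ of $\mathbf q$ given by \eqref{eq:orho h}, with $\rho=\sqrt{M}/K$ and $b=1/K$. Every point of $\Omega_{\rho,b}$ lies within distance $\sqrt{\rho^2+b^2}=\sqrt{M+1}/K$ of $\mathbf q$, so fixing the a‑priori constant $M_\epsilon\ge\sqrt{M+1}$ the hypothesis $d(\mathbf q,\Omega_2)>M_\epsilon/K$ forces $\overline{\Omega}_{\rho,b}\cap\overline{\Omega}_2=\emptyset$. Two consequences follow: first, the source of $\mathbf w$ reduces to exactly $\boldsymbol\varphi_1$ on $\Omega_{\rho,b}$, and $\mathbf q$ is also an admissible $K$‑curvature point of $\Omega_1$ with the same parameters, because $\partial(\Omega_1\Delta\Omega_2)$ coincides with $\partial\Omega_1$ near $\mathbf q$; second, the unbounded path $\gamma\subset\mathbb R^n\setminus\overline{\Omega_1\cup\Omega_2}$ places the exterior part of $\Omega_{\rho,b}$ (below the graph $\gamma$) in the unbounded component of $\mathbb R^n\setminus\overline{\Omega_1\cup\Omega_2}$, on which Rellich's Lemma gives $\mathbf w\equiv\mathbf 0$. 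Since $\mathbf w\in H^2_{\mathrm{loc}}(\mathbb R^n)^n$ has continuous Cauchy data across $\partial\Omega_1$, I would then conclude $\mathbf w=T_\nu\mathbf w=\mathbf 0$ on $\overline{\Omega}_{\rho,b}\cap\partial\Omega_1$.

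With these facts, $\mathbf w$ and $\boldsymbol\varphi_1$ satisfy precisely the hypotheses underlying Corollary~\ref{cor:non-rad}: a non‑radiating field whose source is $\boldsymbol\varphi_1\in C^\alpha(\overline{\Omega_1})^n\cap H^1(\Omega_1)^n$ and which vanishes together with its traction on the boundary piece through $\mathbf q$. Because the chain proving that corollary—the integral identity of Lemma~\ref{lem:integral cur}, the CGO estimates, and the boundary estimate for $\mathcal I_4$—is entirely supported on $\Omega_{\rho,b}$, it applies verbatim and yields $|\boldsymbol\varphi_1(\mathbf q)|\le\mathcal R(\ln K)^{(n+1)/2}K^{-\frac12\min(\alpha,\varsigma)}$ for $n=2$ (and the analogous bound with the extra $K^{1/6}$ factor for $n=3$). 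This contradicts condition~(b) of the admissible class $\mathcal B$ in Definition~\ref{def:ad2}, giving the result.

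I expect the main obstacle to be the \emph{local} regularity of $\mathbf w$ near $\mathbf q$: Corollary~\ref{cor:non-rad} is proved through the global $C^{1,\beta}$ estimate of Lemma~\ref{lem:rise reg}, yet the global source $\chi_{\Omega_1}\boldsymbol\varphi_1-\chi_{\Omega_2}\boldsymbol\varphi_2$ of $\mathbf w$ jumps across $\partial\Omega_1$ and $\partial\Omega_2$ and so is not globally $H^1$. The point I would verify carefully is that the $C^{1,\beta}$ regularity needed to estimate $\mathcal I_4$ is purely local: near $\mathbf q$ the source is the smooth datum $\boldsymbol\varphi_1\in H^1$ and $\mathbf w$ vanishes on the exterior side, so the zero‑extension argument of Lemma~\ref{lem:rise reg} restricts to a neighbourhood of $\mathbf q$ and produces $\mathbf w\in C^{1,\beta}(\overline{\Omega}_{\rho,b})^n$ with norm bounded by a‑priori constants. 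The only other nontrivial bookkeeping is the choice of $M_\epsilon$ (depending solely on $M$) guaranteeing the separation of $\Omega_2$ from $\Omega_{\rho,b}$, which is exactly what the distance hypothesis encodes.
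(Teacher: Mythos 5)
Your proposal follows essentially the same route as the paper's own proof: contradiction, forming the difference field $\mathbf u_1-\mathbf u_2$, invoking Rellich's lemma together with the distance condition to isolate $\Omega_{\rho,b}$ from $\Omega_2$, and then applying Corollary~\ref{cor:non-rad} to bound $|\boldsymbol\varphi_1(\mathbf q)|$ in contradiction with condition~(b) of Definition~\ref{def:ad2}. In fact you are more careful than the paper on two points it glosses over --- the role of the unbounded path $\gamma$ in guaranteeing that the Cauchy data of the difference field vanish near $\mathbf q$, and the fact that the combined source $\chi_{\Omega_1}\boldsymbol\varphi_1-\chi_{\Omega_2}\boldsymbol\varphi_2$ is not globally $H^1$ so the $C^{1,\beta}$ regularity feeding into $\mathcal I_4$ must be localized --- but these are refinements of, not departures from, the paper's argument.
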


\begin{proof}
 Without loss of generality, and by contradiction, suppose that $\Omega_1\setminus \Omega_2$ has an admissible $K$-curvature point $\mathbf{q} \in \partial \Omega_1\setminus \partial \Omega_2$ and satisfies $d(\mathbf{q}, \Omega_2) > M_\epsilon/K$. Then, we know that there is a bounded domain $\Omega_{\rho,b}$, as defined in Definition \ref{def:ad-hi-cur}, containing $\mathbf{q}$, and $\overline{\Omega}_{\rho,b} \cap \overline{\Omega}_2 = \emptyset$.
 
Define $\tilde {\mathbf u}=\mathbf u_1-\mathbf u_2 $, then one has $\mathcal L\tilde{\mathbf u}+\omega^2\tilde{\mathbf u}=\mathbf 0$ in $\mathbb R^n\setminus{\overline{\Omega_1\cup\Omega_2}}$. By  Rellich's Lemma, it is evident that $\tilde {\mathbf u}\equiv\mathbf 0 $ in $\mathbb R^n\setminus{\overline{\Omega_1\cup\Omega_2}}$ since $\mathbf u_1^\infty=\mathbf u_2^\infty$. Additionally, there is a bounded Lipschitz domain $G$ such that  $(\Omega_1\cup\Omega_2) \subset G$  and 
$$\mathcal L\tilde{\mathbf u}+\omega^2\tilde{\mathbf u}=\boldsymbol \varphi\ \mbox{in}\ G,$$
where $\boldsymbol \varphi=\chi_{\Omega_1}\boldsymbol \varphi_1-\chi_{\Omega_2}\boldsymbol \varphi_2.$ We can observe that $\boldsymbol{\varphi}(\mathbf{q}) = \boldsymbol{\varphi}_1(\mathbf{q})$. 
Since $\Omega_{\rho,b} \subset G$, Corollary~\ref{cor:non-rad} yields 
$$\vert \boldsymbol \varphi_1(\mathbf q)\vert \leq \mathcal R(\ln K)^{\frac{n+1}{2}}K^{-\frac{1}{2}\min(\alpha,\varsigma)}\ \mbox{for}\ n=2$$
or
$$\vert \boldsymbol \varphi_1(\mathbf q)\vert \leq \mathcal R(\ln K)^{\frac{n+1}{2}}K^{-\frac{1}{2}\min(\alpha,\varsigma)+\frac{1}{6}},\ \min(\alpha,\varsigma)\in(1/3,1)\ \mbox{for}\ n=3,$$
where $\mathcal R$ is a positive constant independent of $K$. Therefore, we arrive a contradiction with the admissible condition (b) in Definition \ref{def:ad2}.

The proof is complete.
\end{proof}

\subsection{Inverse problems for the medium scattering problem}
Finally, we briefly study the inverse problems for the medium scattering problem related to \eqref{eq:mesca}. The inverse problems for the medium scattering problem in this paper focus on uniquely determining the shape of the inhomogeneous medium scatterer from the knowledge of the far-field pattern using a single measurement, regardless of its physical configurations. As applications of Theorems \ref{thm:tans1} and \ref{thm:trans2}, we establish the local and global uniqueness results for determining inhomogeneous medium scatterers.
Similar to the definitions of admissible classes $\mathcal{A}$ and $\mathcal{B}$ for sources, we shall define the admissible classes $\mathcal{A}'$ and $\mathcal{B}'$ for inhomogeneous medium scatterers.
\begin{defn}\label{def:medad1}
	Let $n,\lambda,\mu, \mathcal M_{min}, \mathcal M_{max},\delta$ be the positive  a-priori constants. Let $(\Omega;\lambda,\mu,V)$ be an inhomogeneous medium scatterer in $\mathbb R^n$ ($n=2,3$) with a connected complement and satisfy the medium scattering problem \eqref{eq:mesca}.  Let $\EM u^i\in L^2_{loc}(\mathbb R^n)^n$  be an incident wave and $\EM u^t$ be the total field. Suppose that $\Omega$ be a collection of the mutually disjoint scatterers:
	$$\Omega=\bigcup_{j=1}^{N}\Omega^j\ \mbox{and}\   \overline{\Omega}^i\cap\overline{\Omega}^j=\emptyset\ \mbox{for}\ i\not =j\ (i,j=1,\dots,N\in \mathbb R^n),$$
	 then the medium scatterer $(\Omega;\lambda,\mu,V)$ belongs to an admissible class $\mathcal A^{'}$ if the following admissible conditions are fulfilled:
	
	\begin{itemize}
	\item[(a)]	$supp (V)\subset \Omega$,  $\inf_{\partial \Omega}|V|\geq \mathcal M_{min}$ and $\|V\|_{\tilde{\Omega}^\delta(\overline{\Omega})^n}\leq \mathcal M_{max}$, where $\delta\in (0,1]$ for $n=2$ and $\delta \in (0,1/2]$ for $n=3$. 
	\item [(b)] Let $\varepsilon=d(\Omega)\omega$, the total field $\EM u^t$ satisfies that
	$\sup_{\partial \Omega}|\EM u^t|\geq  C \varepsilon^{\delta}(1+(1+\varepsilon)\varepsilon^{\frac{n}{2}}),$
     where $C$ is a positive constant depending on  a-priori constants.
	\end{itemize}
\end{defn}

In Theorem \ref{thm:medinv1}, we establish a global uniqueness result demonstrating that if two medium scatterers consist of a finite number of disjoint small scatterers and produce the same far-field pattern, then they have the same number of components.
\begin{thm}\label{thm:medinv1}
	Let $(\Omega_i;\lambda,\mu,V_i)$ $(i=1,2)$ be two inhomogeneous medium scatterers belonging to the admissible class $\mathcal {B}^{'}$, where 
	$\Omega_1=\bigcup_{j=1}^N \Omega_1^{j}, \ \Omega_2=\bigcup_{\ell=1}^M \Omega_2^{\ell}$. Let $\varepsilon_{min}$ be given in 
	$$\min \left(\sup|\EM u_1^t|_{\partial \Omega_1}, \sup|\EM u_2^t|_{\partial \Omega_2}\right)=C \varepsilon_{min}^{\delta}(1+(1+\varepsilon_{min})\varepsilon_{min}^{\frac{n}{2}}),$$ where $C$ is a positive constant independent of  $\varepsilon$. 
Suppose that  each $\Omega_1^j$ and $\Omega_2^\ell$ has a diameter of at most $\varepsilon_{min}\omega^{-1}$ and $d(\Omega_1^{j_1},\Omega_1^{j_2})>2\varepsilon_{min}\omega^{-1}$ for $j_1\not=j_2$, $d(\Omega_2^{\ell_1},\Omega_2^{\ell_2})>2\varepsilon_{min}\omega^{-1}$ for $\ell_1\not=\ell_2$. 

	Let $\EM u^t_i$ and $\EM u^\infty_i$ be the total fields and the far-field patterns corresponding to $\Omega_i$, $i=1,2$, respectively. If $\EM u^\infty_1=\EM u^\infty_2$, then one has $M=N$ and under re-indexing, $\overline{\Omega}_1^{j}\cap \overline{\Omega}_2^j\not=\emptyset$, ($j=1,\dots,N$). 
	\end{thm}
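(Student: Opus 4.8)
The plan is to argue by contradiction and, on each isolated small component, to reduce the problem to the transmission eigenvalue system \eqref{eq:trans}, so that Theorem \ref{thm:tans1} forces the total field to be nearly vanishing on that component's boundary, in contradiction with the admissible lower bound of Definition \ref{def:medad1}. Suppose the conclusion fails; then, after possibly interchanging the roles of $\Omega_1$ and $\Omega_2$, there is a component $\Omega_1^{j_0}$ with $\overline{\Omega}_1^{j_0}\cap\overline{\Omega}_2=\emptyset$. Set $\tilde{\EM u}=\EM u_1-\EM u_2$, which is an outgoing solution of $\mathcal L\tilde{\EM u}+\omega^2\tilde{\EM u}=-\omega^2(V_1\EM u_1^t-V_2\EM u_2^t)$ whose far-field pattern vanishes since $\EM u_1^\infty=\EM u_2^\infty$. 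Because $V_1=V_2=\EM 0$ off $\overline{\Omega_1\cup\Omega_2}$, Rellich's lemma together with the connectedness of $\mathbb R^n\setminus\overline{\Omega_1\cup\Omega_2}$ gives $\tilde{\EM u}\equiv\EM 0$ in the unbounded component; as $\Omega_1^{j_0}$ is separated from every other component by the distance hypotheses, $\tilde{\EM u}=\EM 0$ holds in a full exterior neighborhood of $\partial\Omega_1^{j_0}$, whence $\tilde{\EM u}=T_\nu\tilde{\EM u}=\EM 0$ on $\partial\Omega_1^{j_0}$.

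First I would build the transmission eigenfunction pair on $\Omega_1^{j_0}$. Put $\EM w=\EM u_1^t|_{\Omega_1^{j_0}}$ and $\EM v=(\EM u^i+\EM u_2)|_{\Omega_1^{j_0}}$, so that $\EM w-\EM v=\tilde{\EM u}$. On $\Omega_1^{j_0}$ the field $\EM w$ solves $\mathcal L\EM w+\omega^2(1+V_1)\EM w=\EM 0$, while $\EM v$ solves $\mathcal L\EM v+\omega^2\EM v=\EM 0$, because $\EM u^i$ is entire and $\EM u_2$ satisfies the homogeneous Lamé system there (using $\supp(V_2)\subset\Omega_2$ and $\overline{\Omega}_1^{j_0}\cap\overline{\Omega}_2=\emptyset$). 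The boundary relations from the previous paragraph give $\EM w=\EM v$ and $T_\nu\EM w=T_\nu\EM v$ on $\partial\Omega_1^{j_0}$. Hence $(\EM w,\EM v)$ is a pair of transmission eigenfunctions for \eqref{eq:trans} on $\Omega_1^{j_0}$ at the eigenvalue $\omega$, with potential $V_1$.

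Next I would invoke Theorem \ref{thm:tans1} on $\Omega_1^{j_0}$, whose rescaled diameter obeys $d(\Omega_1^{j_0})\omega\le\varepsilon_{min}$, using $\inf_{\partial\Omega_1^{j_0}}|V_1|\ge\mathcal M_{min}>0$ and $\|V_1\|_{\widetilde C^\delta}\le\mathcal M_{max}$ from the admissible class. After normalizing by the (a-priori bounded) quantity $\|\EM w\|_{\widetilde C^\delta}$, which is controlled through the well-posedness estimate of Proposition \ref{prop:ust2} and the Hölder a-priori bounds, this produces $\sup_{\partial\Omega_1^{j_0}}|\EM u_1^t|\le C\,\dfrac{\|V_1\|_{\widetilde C^\delta}}{\inf_{\partial\Omega_1^{j_0}}|V_1|}\,\varepsilon^\delta\bigl(1+(1+\varepsilon)\varepsilon^{n/2}\bigr)$, with $\varepsilon=d(\Omega_1^{j_0})\omega$. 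Since the constant in condition (b) of Definition \ref{def:medad1} absorbs $\mathcal M_{max}/\mathcal M_{min}$ and the a-priori bound on $\|\EM u_1^t\|_{\widetilde C^\delta}$, this contradicts that lower bound. Consequently, every component of $\Omega_1$ must meet some component of $\Omega_2$, and symmetrically.

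Finally, the counting follows from the separation hypotheses: since distinct components of each scatterer are more than $2\varepsilon_{min}\omega^{-1}$ apart while every component has diameter at most $\varepsilon_{min}\omega^{-1}$, a single component of $\Omega_1$ can have non-empty closure-intersection with at most one component of $\Omega_2$, and conversely. This yields a bijection between the two families, so $M=N$ and, after re-indexing, $\overline{\Omega}_1^{j}\cap\overline{\Omega}_2^{j}\neq\emptyset$ for $j=1,\dots,N$. The main obstacle I anticipate lies in the reduction of the second paragraph: rigorously confirming that $\tilde{\EM u}$ vanishes in a genuine neighborhood of the isolated component (the Rellich–unique-continuation step combined with the geometric separation), verifying that $(\EM w,\EM v)$ qualifies as an admissible transmission pair in $H^1(\Omega_1^{j_0})^n\times H^1(\Omega_1^{j_0})^n$, and in particular matching the normalized bound of Theorem \ref{thm:tans1} against the unnormalized admissible condition (b), which requires the a-priori $\widetilde C^\delta$-control on $\EM u_1^t$ to be handled with care.
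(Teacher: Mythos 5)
Your proposal is correct and follows essentially the same route as the paper: argue by contradiction on an isolated component $\Omega_1^{j_0}$, use Rellich's lemma and the separation hypotheses to show $(\mathbf u_1^t,\mathbf u_2^t)$ forms a transmission eigenfunction pair on $\Omega_1^{j_0}$, invoke Theorem \ref{thm:tans1} to contradict condition (b) of Definition \ref{def:medad1}, and then count components via the distance assumptions. Your treatment is in fact somewhat more careful than the paper's on the normalization $\|\mathbf w\|_{\widetilde C^\delta}\le 1$ required by Theorem \ref{thm:tans1}, which the paper silently absorbs into the a-priori constant.
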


\begin{proof}
We will prove this theorem by contradiction. Without loss of generality, assume that there is a component $\Omega_1^{j_0}\subset \Omega_1$ such that $\overline{\Omega}_1^{j_0}\cap\overline{\Omega}_2=\emptyset.$ Then one has 
\begin{equation}
	\begin{cases}\notag
	\mathcal L \EM u_1^t+\omega^2(1+V_1)\EM u_1^t=\EM 0\hspace{0.7cm}\mbox{in}\ \Omega_1^{j_0},\\
	\mathcal L \EM u_2^t+\omega^2\EM u_2^t=\EM 0\hspace{2.1cm}\mbox{in}\ \Omega_1^{j_0},\\
	\EM u_1^t=\EM u_2^t, \ T_\nu \EM u_1^t=\T_\nu \EM u_2^t \hspace{1cm}\mbox{on}\ \partial \Omega_1^{j_0}.
\end{cases}
\end{equation}
By virtue of Theorem \ref{thm:tans1}, one has
       $$ \sup_{\partial \Omega_1}|\EM u^t_1|\leq C  \varepsilon_{min}^\delta(1+(1+\varepsilon_{min})\varepsilon_{min}^{\frac{n}{2}}),$$
which contradicts the admissible condition (b) in Definition \ref{def:medad1}. Therefore, we obtain that  $\overline{\Omega}_1^{j_0}\cap\overline{\Omega}_2\not=\emptyset$. Similar to the argument in the proof of Corollary \ref{cor:inv3}, we conclude that $M = N$.
The proof is complete.
\end{proof}

Next, we introduce the definition of the admissible class $\mathcal{B}'$ for inhomogeneous medium scatterers with admissible $K$-curvature points and establish a local uniqueness result. The proof can be obtained by combining a similar approach to Theorem \ref{thm:medinv1} with Theorem \ref{thm:trans2}. Therefore, we omit it here.

\begin{defn}\label{def:ad-me2}
Let $L,M,\varsigma,n,\omega,\lambda,\mu, D,\Xi,M_i, \alpha$ be the positive a-priori constants. Let $(\Omega;\lambda,\mu,V)$ be an inhomogeneous medium scatterer in $\mathbb R^n\ (n=2,3)$ satisfying the medium scattering problem \eqref{eq:mesca}, where $\Omega$ is a bounded Lipschitz domain with a connected complement. Suppose that $\Omega$ has a diameter of at most $D$ and  $\EM q\in \partial \Omega$ is an admissible $K$-curvature point with parameters $K,L,M,\varsigma$, $K\geq e$. Let $\EM u^i$ be an incident wave and satisfy   $\max(\|\EM u^i\|_{H^1(\mathbb R^n)^n},\|\EM u^i\|_{C^\alpha(\mathbb R^n)^n})\leq M_i$. 
Then the medium scatterer $(\Omega;\lambda,\mu,V)$ is said to belong to the admissible class $\mathcal B^{'}$ if the following admissible conditions are satisfied:
\begin{itemize}
\item [(a)]$supp(V)\subset \Omega$, $V\in C^\alpha(\overline{\Omega})^n\cap H^1(\Omega)^n$ and $\max(\|\boldsymbol {V}\|_{C^\alpha(\overline {\Omega})^n},
    \|\boldsymbol {V }\|_{H^1(\Omega)^n})\leq \Xi$, where $\alpha\in (0,1)$ for $n=2$ and $\alpha \in (1/3,1)$ for $n=3$.
\item [(b)]
The total field $\EM u^t\in C^\alpha(\overline{\Omega})^n\cap H^1(\Omega)^n$ and  
\begin{equation}\notag
\vert \EM u^t(\EM q)\vert \geq \mathcal C(\ln K)^{\frac{n+1}{2}}K^{-\frac{1}{2}\min(\alpha,\varsigma)} \quad \mbox{for}\ n=2
\end{equation}
or 
\begin{equation}\notag
\vert \EM u^t(\EM q)\vert\geq \mathcal C(\ln K)^{\frac{n+1}{2}}K^{-\frac{1}{2}\min(\alpha,\varsigma)+\frac{1}{6}},\ \min(\alpha, \varsigma)\in(1/3,1)\quad \mbox{for}\ n=3,
\end{equation}
where $\mathcal C$ is a positive constant depending on   a-priori constants.
\end{itemize}
\end{defn}

\begin{thm}\label{thm:medinv2}
Let $(\Omega_i; \lambda, \mu, V_i)$ $(i=1,2)$ be two medium scatterers in $\mathbb{R}^n$ $(n=2,3)$ and belonging to the admissible class $\mathcal{B}'$. Let $\EM u_i^\infty$ be the far-field patterns corresponding to $\Omega_i$ associated with the incident wave $\EM u^i$. Let $M_\epsilon > 1$ be an a priori constant. If $\mathbf{u}_1^\infty = \mathbf{u}_2^\infty$, then there cannot exist an admissible $K$-curvature point $\mathbf{q}$ with the parameters $K, L, M, \varsigma$ such that neither $d(\mathbf{q}, \Omega_1) > M_\epsilon/K$ for $\mathbf{q} \in \partial \Omega_2 \setminus \partial \Omega_1$ nor $d(\mathbf{q}, \Omega_2) > M_\epsilon/K$ for $\mathbf{q} \in \partial \Omega_1 \setminus \partial \Omega_2$,  and there exists an unbounded path $\gamma\subset \mathbb R^n\setminus \overline{\Omega_1\cup\Omega_2}$ connecting $\EM q$ to infinity.
\end{thm}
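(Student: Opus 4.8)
The plan is to argue by contradiction and, near the putative admissible $K$-curvature point, reduce the situation to the transmission eigenvalue framework of Theorem \ref{thm:trans2}, mimicking the contradiction scheme of Theorem \ref{thm:in3} (source case) and Theorem \ref{thm:medinv1} (global medium case). Suppose, contrary to the claim, that $\Omega_1\Delta\Omega_2$ possesses such an admissible $K$-curvature point $\EM q$; without loss of generality take $\EM q\in\partial\Omega_1\setminus\partial\Omega_2$ with $d(\EM q,\Omega_2)>M_\epsilon/K$. Since the boundary neighbourhood $\Omega_{\rho,b}$ attached to $\EM q$ in Definition \ref{def:ad-hi-cur} has transversal scales $\rho=\sqrt{M}/K$ and $b=1/K$, the separation hypothesis $d(\EM q,\Omega_2)>M_\epsilon/K$ with $M_\epsilon>1$ taken large relative to $M$ guarantees that $\overline{\Omega}_{\rho,b}\cap\overline{\Omega_2}=\emptyset$, so that $V_2\equiv\EM 0$ throughout $\Omega_{\rho,b}$.

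Next I would exploit the coincidence of the far fields. Writing $\tilde{\EM u}=\EM u_1-\EM u_2=\EM u_1^t-\EM u_2^t$ for the difference of the scattered (equivalently, total) fields, the hypothesis $\EM u_1^\infty=\EM u_2^\infty$ together with Rellich's lemma yields $\tilde{\EM u}\equiv\EM 0$ in the unbounded connected component of $\mathbb R^n\setminus\overline{\Omega_1\cup\Omega_2}$. The assumed unbounded path $\gamma\subset\mathbb R^n\setminus\overline{\Omega_1\cup\Omega_2}$ joining $\EM q$ to infinity places the exterior side of $\partial\Omega_1$ near $\EM q$ in that same component, whence $\tilde{\EM u}=\EM 0$ in an open set lying just outside $\partial\Omega_1$ around $\EM q$. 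Because $\lambda,\mu$ are constant and only the zeroth-order coefficient jumps across $\partial\Omega_1$, elliptic regularity gives $\EM u_1^t,\EM u_2^t\in H^2_{\mathrm{loc}}$ with continuous displacement and traction across $\partial\Omega_1$; combined with $\tilde{\EM u}=\EM 0$ on the exterior side, this forces matching Cauchy data on $\overline{\Omega}_{\rho,b}\cap\partial\Omega_1$.

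Setting $\EM w=\EM u_1^t|_{\Omega_{\rho,b}}$ and $\EM v=\EM u_2^t|_{\Omega_{\rho,b}}$, the identities $\mathcal L\EM w+\omega^2(1+V_1)\EM w=\EM 0$ and $\mathcal L\EM v+\omega^2\EM v=\EM 0$ (the latter since $V_2\equiv\EM 0$ on $\Omega_{\rho,b}$), together with $\EM w=\EM v$ and $T_\nu\EM w=T_\nu\EM v$ on $\overline{\Omega}_{\rho,b}\cap\partial\Omega_1$, exhibit $(\EM w,\EM v)$ as a local solution of the elastic transmission system \eqref{eq:trans} at the admissible $K$-curvature point $\EM q$. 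The regularity and size hypotheses of the admissible class $\mathcal B'$ (Definition \ref{def:ad-me2}) supply the bounds on $V_1$ and on $\EM w=\EM u_1^t$ needed to invoke Theorem \ref{thm:trans2}, which then returns
\[
|\EM w(\EM q)|=|\EM u_1^t(\EM q)|\le \mathcal C(\ln K)^{\frac{n+1}{2}}K^{-\frac12\min(\alpha,\varsigma)}\quad(n=2)
\]
and the analogous estimate with exponent $-\tfrac12\min(\alpha,\varsigma)+\tfrac16$ for $n=3$. Since admissible condition (b) of $\mathcal B'$ imposes the reverse lower bound on $|\EM u_1^t(\EM q)|$ with a strictly larger constant, this is the desired contradiction; the case $\EM q\in\partial\Omega_2\setminus\partial\Omega_1$ is entirely symmetric.

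The main obstacle will be the rigorous passage in the second step from the vanishing of $\tilde{\EM u}$ in the exterior to the matching Cauchy data for $(\EM w,\EM v)$ on $\overline{\Omega}_{\rho,b}\cap\partial\Omega_1$. This requires pinning down that the exterior region abutting $\partial\Omega_1$ at $\EM q$ genuinely lies in the unbounded component reached by unique continuation along $\gamma$, so that both the separation condition $d(\EM q,\Omega_2)>M_\epsilon/K$ and the path hypothesis are used in an essential way, and that the $H^2_{\mathrm{loc}}$ regularity of the total fields---valid precisely because the Lam\'e parameters do not jump across the interface---legitimizes taking continuous traces of displacement and traction from both sides. Once this transmission structure is secured, the invocation of Theorem \ref{thm:trans2} and the contradiction with the admissible lower bound are routine.
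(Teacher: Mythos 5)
Your argument is essentially the paper's intended proof: the paper omits the details, stating only that the result follows by combining the contradiction scheme of Theorem \ref{thm:medinv1} with Theorem \ref{thm:trans2}, and that is precisely what you do --- localize at $\mathbf{q}$ via the distance condition $d(\mathbf{q},\Omega_2)>M_\epsilon/K$, use Rellich's lemma together with the path $\gamma$ to match the Cauchy data of $\mathbf{u}_1^t$ and $\mathbf{u}_2^t$ on $\overline{\Omega}_{\rho,b}\cap\partial\Omega_1$, and invoke the transmission-eigenfunction estimate to contradict condition (b) of $\mathcal{B}'$. Your extra care about exterior connectivity and trace matching goes slightly beyond what the paper records, but the route is the same.
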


\section*{Acknowledgements}
The work of H. Diao is supported by by National Natural Science Foundation of China  (No. 12371422) and the Fundamental Research Funds for the Central Universities, JLU. The work of X. Fei is supported by NSFC/RGC Joint Research Grant No. 12161160314. The work of H. Liu is supported by the Hong Kong RGC General Research Funds (projects 11311122, 11300821, and 11303125), the NSFC/RGC Joint Research Fund (project  N\_CityU101/21), the France-Hong Kong ANR/RGC Joint Research Grant, A-CityU203/19.

	\end{document}